\documentclass[11pt]{amsart}

\usepackage[T1]{fontenc}
\usepackage[utf8]{inputenc}
\usepackage{amsmath,amssymb,amsthm,mathtools,mathrsfs}
\usepackage{enumitem}
\usepackage{booktabs,array,tabularx}
\usepackage{microtype}
\usepackage[margin=1in]{geometry}
\usepackage{xcolor}
\usepackage{hyperref}
\hypersetup{
  unicode=true,
  colorlinks=true,
  linkcolor=blue!48!black,
  citecolor=green!32!black,
  urlcolor=blue!55!black,
  pdftitle={Rank Amplification for Shifted Equal Values of Euler's Totient Function},
  pdfauthor={Eric Li},
  pdfsubject={Euler function, shifted equal values, rank amplification},
  pdfkeywords={Euler function, shifted equal values, rank amplification, chains, smooth values, friable values, formal verification}
}

\makeatletter
\newcommand{\affiliation}[1]{\gdef\@affiliation{#1}}
\let\@affiliation\@empty
\def\@setauthors{%
  \begingroup
  \def\thanks{\protect\thanks@warning}%
  \trivlist
  \centering\footnotesize \@topsep30\p@\relax
  \advance\@topsep by -\baselineskip
  \item\relax
  \author@andify\authors
  \def\\{\protect\linebreak}%
  \MakeUppercase{\authors}\par
  \ifx\@empty\@affiliation\else
    \vskip 8\p@
    {\normalfont\normalsize\@affiliation\par}%
  \fi
  \ifx\@empty\contribs
  \else
    ,\penalty-3 \space \@setcontribs
    \@closetoccontribs
  \fi
  \endtrivlist
  \endgroup
}
\makeatother

\newtheorem{theorem}{Theorem}[section]
\newtheorem{proposition}[theorem]{Proposition}
\newtheorem{lemma}[theorem]{Lemma}
\newtheorem{corollary}[theorem]{Corollary}
\newtheorem{definition}[theorem]{Definition}
\theoremstyle{remark}
\newtheorem{remark}[theorem]{Remark}

\numberwithin{equation}{section}

\newcommand{\rad}{\operatorname{rad}}
\newcommand{\sA}{\mathscr A}

\title[Rank amplification for Euler's totient function]
{Rank Amplification for Shifted Equal Values of Euler's Totient Function}

\author{Eric Li}
\affiliation{Trinity College, University of Cambridge}
\thanks{Email addresses: \href{mailto:el593@cam.ac.uk}{el593@cam.ac.uk}, \href{mailto:contact@ericli.com}{contact@ericli.com}.}
\thanks{All results of this paper have been formally verified in Lean; see Section~\ref{sec:formalisation} and~\cite{LeanFormalization}.}
\date{June 22, 2026}
\subjclass[2020]{Primary 11N25; Secondary 11N37, 11N36, 11A25}
\keywords{Euler function, shifted equal values, rank amplification, chains, smooth values, friable values, formal verification}

\begin{document}

\begin{abstract}
Let
\[
 S_h^\varphi(x)=\#\{n\le x:\varphi(n)=\varphi(n+h)\}.
\]
For the unit shift we prove
\[
 S_1^\varphi(x)
 \ll x\exp\!\left\{-(\tfrac12-o(1))
        \sqrt{\log x\,\log_2 x}\right\}.
\]
More generally, put
\[
 \sA=\log_3 x+\log_4 x-\log 2,
 \qquad G=\sqrt{\log x\,\sA},
 \qquad V=\frac{\log x}{G}.
\]
For every fixed integer $J\ge1$, uniformly for
$1\le h\le\exp\{G/\sqrt J\}$, we obtain
\[
 S_h^\varphi(x)=D_{h,>e^{\sqrt JG}}^\varphi(x)
 +O_J\!\left(x\exp\{-\sqrt JG+o_J(V)\}\right).
\]
Here $D_{h,>Y}^\varphi(x)$ is the above-cutoff part of the classical
Graham--Holt--Pomerance same-support family; it is empty for odd $h$.
A moving choice $J\asymp\log_2 x/\log_3 x$ gives the first displayed
unit-shift estimate and an analogous decomposition for a uniform range of
shifts.  The proof combines the smooth-totient theorem of
Banks--Friedlander--Pomerance--Shparlinski with labelled supplier systems,
a shifted divisor convolution, and an injective encoding of large supplier
products into weighted friable tuples.  This gives a quantitative separation
of the GHP diagonal from the off-diagonal solutions at both fixed and moving
rank.  The results of this paper have been formally verified in Lean.
\end{abstract}

\maketitle

\section{Introduction}

Let $P^+(m)$ denote the largest prime factor of $m$, with $P^+(1)=1$, and
let $\log_j x$ denote the $j$-fold iterated logarithm.  For a positive
integer $h$, put
\[
 S_h^\varphi(x)=\#\{n\le x:\varphi(n)=\varphi(n+h)\},
 \qquad S(x)=S_1^\varphi(x).
\]
The unit-shift equation $\varphi(n)=\varphi(n+1)$ is a classical test
problem for shifted values of Euler's function.  Erd\H{o}s--Pomerance--S\'ark\H{o}zy proved the unit-shift bound
\[
 S_1^\varphi(x)\ll x\exp\{-(\log x)^{1/3}\}
\]
for all sufficiently large $x$ \cite{EPS}.  Graham--Holt--Pomerance
introduced the same-support prime-pair
construction and analysed its role in shifted equal totients
\cite[Theorems~1 and~2]{GHP}.  Pollack--Pomerance--Trevi\~no gave a
uniform decomposition into this family and its complement for
$h\le\exp\{(\log x)^{1/3}\}$ and proved the quantitative bound for the
finite same-support index set used below
\cite[Theorem~3.1 and Lemma~3.2]{PPT}.  Yamada subsequently proved, for
each fixed odd $h$,
\begin{equation}\label{eq:yamada-intro}
 S_h^\varphi(x)\ll_h x\exp\!\left\{-(2^{-1/2}+o(1))
 (\log x\,\log_3 x)^{1/2}\right\}
\end{equation}
\cite[Theorem~1.2 and Corollary~1.3]{Yamada}.  Kinlaw--Kobayashi--Pomerance
study the reciprocal sum and computation for the unit shift \cite{KKP}.
For even shifts, Kim proved that at least one positive even shift has
infinitely many solutions \cite[Theorem~1.1]{Kim}, and Ford obtained
stronger explicit infinitude results \cite[Theorem~1]{Ford}.

We define the diagonal before stating the main theorem.  For $h\ge1$, let
\[
 \mathcal J_h=\{j\ge1:\rad(j)=\rad(j+h)\}.
\]
For $j\in\mathcal J_h$ put
\[
 d_j=(j,j+h),\qquad A_j=\frac{j+h}{d_j},\qquad
 B_j=\frac{j}{d_j},\qquad \Gamma_j=\varphi(j)A_j.
\]
Equality of prime supports gives
$\Gamma_j=\varphi(j+h)B_j$.  For $Y\ge2$, let
$D_{h,>Y}^\varphi(x)$ be the number of \emph{integers} $n\le x$ for
which at least one pair $(j,r)$ satisfies
\begin{equation}\label{eq:intro-diagonal-param}
 n=j(A_jr+1),\qquad n+h=(j+h)(B_jr+1),
\end{equation}
where $j\in\mathcal J_h$, $r\ge1$,
\[
 A_jr+1\ \text{and}\ B_jr+1\ \text{are prime},
 \qquad (A_jr+1,j)=1,
 \qquad (B_jr+1,j+h)=1,
\]
and $P^+(\Gamma_jr)>Y$.  Multiple representations of the same integer
are counted once.  Thus $D_{h,>Y}^\varphi(x)$ is the above-cutoff tail in the cutoff
parameter.  Pollack--Pomerance--Trevi\~no prove
\[
 |\mathcal J_h|\le 3\cdot7^{3+2\omega(h)}
\]
\cite[Lemma~3.2]{PPT}.  That bound is not needed below: the only uniform
inputs about $\mathcal J_h$ used here are the elementary support,
reciprocal-mass, counting, and parity statements of
Lemma~\ref{lem:J-finite}.

The following compact comparison separates shift range from error scale.
The diagonal in the first three rows is the same classical GHP family.
\begin{center}
\small
\renewcommand{\arraystretch}{1.12}
\begin{tabularx}{\textwidth}{@{}>{\raggedright\arraybackslash}p{0.26\textwidth}>{\raggedright\arraybackslash}p{0.28\textwidth}>{\raggedright\arraybackslash}X@{}}
\toprule
Reference & shift range & off-diagonal upper bound \\
\midrule
Graham--Holt--Pomerance \cite{GHP}
 & fixed $h$ & $x\exp\{-(\log x)^{1/3}\}$ \\
Pollack--Pomerance--Trevi\~no \cite[Theorem~3.1]{PPT}
 & $h\le e^{(\log x)^{1/3}}$
 & $x\exp\{-(\log x)^{1/3}\}$ \\
Yamada \cite[Theorem~1.2]{Yamada}
 & fixed $h$; full count for odd $h$
 & $x\exp\{-(2^{-1/2}+o(1))\sqrt{\log x\,\log_3 x}\}$ \\
Theorem~\ref{thm:main}
 & $h\le e^{G/\sqrt J}$, fixed $J$
 & $x\exp\{-\sqrt JG+o_J(V)\}$ \\
Theorem~\ref{thm:scale-jump}
 & $h\le e^T$
 & $x\exp\{-(\tfrac12-o(1))\sqrt{\log x\,\log_2 x}\}$ \\
\bottomrule
\end{tabularx}
\end{center}
For fixed odd $h$, Theorem~\ref{thm:main} with $J=1$ improves the leading
coefficient in the fixed-odd-shift upper bound at the
$\sqrt{\log x\,\log_3 x}$ scale from $2^{-1/2}$ to $1$.  Allowing the
rank to move changes the logarithmic scale from
$\sqrt{\log x\,\log_3 x}$ to $\sqrt{\log x\,\log_2 x}$.  For all shifts,
the result retains the above-cutoff GHP tail while extending the uniform
range beyond that in the quoted PPT theorem.  These are separate
comparisons; no claim of an exhaustive novelty search is made.

\begin{theorem}[Fixed-rank diagonal decomposition]\label{thm:main}
Let
\[
 L=\log x,\qquad \sA=\log_3 x+\log_4 x-\log 2,
 \qquad G=(L\sA)^{1/2},\qquad V=\frac{L}{G}.
\]
For every fixed integer $J\ge1$ there are $x_0(J)$ and a function
$\varepsilon_J(x)\to0$ such that, for $x\ge x_0(J)$, with
\[
 T_J=\frac{G}{\sqrt J},\qquad y_J=e^{T_J},\qquad
 Y_J=e^{\sqrt JG},
\]
one has, uniformly for positive integers $1\le h\le y_J$,
\begin{equation}\label{eq:rank-main-all}
 S_h^\varphi(x)=D_{h,>Y_J}^\varphi(x)
 +O_J\!\left(x\exp\{-\sqrt JG+\varepsilon_J(x)V\}\right).
\end{equation}
If $h$ is odd, then $D_{h,>Y_J}^\varphi(x)=0$.
\end{theorem}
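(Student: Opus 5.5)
We follow the Erdős--Pomerance--Sárközy / Graham--Holt--Pomerance strategy, with rank $J$ replacing their single layer. For $n\le x$ with $\varphi(n)=\varphi(n+h)$, write $m=n$, $m'=n+h$ and set $p=P^+(m)$, $q=P^+(m')$, with $m=ap$, $m'=bq$. Outside an exceptional set we may assume $p\|m$ and $q\|m'$. The exceptional set consists of three kinds of $n$:
\begin{itemize}[leftmargin=2em]
\item $n$ with $P^+(n)\le Y_J$, or with $P^+(n+h)\le Y_J$;
\item $n$ with $p^2\mid n$ for some $p>Y_J$;
\item $n$ with $P^+(\varphi(n))\le Y_J$.
\end{itemize}
By the smooth-totient theorem of Banks--Friedlander--Pomerance--Shparlinski, together with de Bruijn's estimate $\Psi(x,e^{\sqrt J G})=x\exp\{-(1+o(1))u\log u\}$ with $u=L/(\sqrt JG)$, all three kinds have size $\ll x\exp\{-\sqrt JG+o(V)\}$. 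The point of the constant $\sA=\log_3x+\log_4x-\log2$ is exactly that $u\log u$ for $\varphi$-smooth values, with the BFPS extra $\log\log$ loss, matches $\sqrt J G$ to within $o(V)$ at the cutoff $Y_J$; I would verify this calibration first.

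Next I would iterate the supplier step $J$ times. Let $\ell=P^+(\varphi(n))>Y_J$, so $\ell$ divides $\varphi(n)=\varphi(n+h)$. It is supplied by a prime $p_1\mid n$ with $\ell\mid p_1-1$ (or $\ell^2\mid n$, which is negligible), and by a prime $q_1\mid n+h$ with $\ell\mid q_1-1$. This gives a labelled supplier system: primes $p_1,q_1\equiv1\pmod\ell$, with $p_1\mid n$ and $q_1\mid n+h$. Stripping them off and repeating on the cofactors yields $J$ levels of large primes $\ell_1,\dots,\ell_J>Y_J^{1/J}$ together with supplier pairs.

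For each fixed supplier configuration, the number of $n$ is bounded by a shifted divisor convolution. We sum over $p_1\cdots p_J\mid n$ and $q_1\cdots q_J\mid n+h$ with the congruences $p_i\equiv q_i\equiv1\pmod{\ell_i}$, using CRT together with the Brun--Titchmarsh inequality. The count is roughly $x\prod_i(\log x)^2/\ell_i^2$ times the number of $\ell_i$, that is, $\ll x\prod_i \ell_i^{-1+o(1)}$. Since $\prod_i\ell_i$ is large, this is where rank amplification comes from: $J$ independent savings of $Y_J^{1/J}$-size primes.

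The main obstacle is the \emph{off-diagonal versus diagonal} dichotomy together with the bookkeeping of large supplier products. When the supplier primes coincide in structure, with $n=j(A_jr+1)$ and $n+h=(j+h)(B_jr+1)$, the congruence saving collapses. This is precisely the GHP family, and there $P^+(\Gamma_jr)>Y_J$ lands it in $D^\varphi_{h,>Y_J}$. To show that every other configuration saves, I would use the injective encoding from the abstract. Map each non-diagonal supplier product to a weighted tuple of $Y_J$-friable integers plus a bounded amount of data, show the map is injective, and then count the tuples by Rankin's trick. The weights must be chosen so that the total friable mass is $\exp\{-\sqrt JG+o_J(V)\}$, and the shift enters only through $h\le y_J=e^{G/\sqrt J}$. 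Bounding divisors of $h$ and the loss from the coprimality conditions costs at most $e^{O(T_J)}$, which is absorbed. If this encoding is delicate, my fallback is a Hölder-type split over the $J$ levels, keeping only the level with the largest $\ell_i$.

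Finally, for odd $h$ we show $D=0$ using Lemma~\ref{lem:J-finite}'s parity statement. If $h$ is odd, then $j$ and $j+h$ have opposite parity, so $\rad(j)=\rad(j+h)$ forces $2\nmid j(j+h)$, which is impossible. Hence $\mathcal J_h=\emptyset$ and $D_{h,>Y_J}^\varphi(x)=0$.
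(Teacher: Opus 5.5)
Your plan breaks for every $J\ge2$ because of how you calibrate the smooth-totient class, and the rest of the plan depends on that calibration. At the cutoff $Y_J=e^{\sqrt JG}$ the BFPS parameter is $u=L/(\sqrt JG)=V/\sqrt J$, and $\log_2u+\log_3u=\sA+o(1)$. So \eqref{eq:bfps-intro} gives only $\Phi(x,Y_J)\le x\exp\{-G/\sqrt J+o(V)\}$.

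In general, the smooth-totient saving available at cutoff $e^T$ is $G^2/T$ up to $o(V)$ (Lemma~\ref{lem:bfps-uniform}\textup{(i)}). This decreases in $T$ and equals $\sqrt JG$ only at $T=T_J=G/\sqrt J$, i.e.\ at $y_J$, not at $Y_J$. Your matching is the rank-one saddle $T=G$ and holds only for $J=1$. Hence your third exceptional class, $P^+(\varphi(n))\le Y_J$, is not negligible. It is exactly the range where the amplification must take place.

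Conversely, the range $P^+(\varphi(n))>Y_J$, where you iterate, needs no iteration. The rank-one transfer at cutoff $Y_J$ (Proposition~\ref{prop:transfer-diagonal} with Proposition~\ref{prop:b4-general} at $T=\sqrt JG$) already gives $D^\varphi_{h,>Y_J}(x)+O_J(xe^{-\sqrt JG+o_J(V)})$. It is there, in the small-cofactor case $m_2b=m_1a$, that the GHP diagonal is isolated. No encoding is involved in that step.

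What is missing is a treatment of $P^+(M)\le Y_J$, where $M=\varphi(n)=\varphi(n+h)$. The paper splits this range by $\Omega_{y_J}(M)$. Your claim that repeated stripping ``yields $J$ levels'' of primes above $y_J$ is unjustified, and the case with fewer levels is precisely the one you never handle.

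\begin{itemize}
\item \emph{Case $\Omega_{y_J}(M)<J$.} Remove the fewer than $J$ local factors $q^a\Vert n$ whose totient has a prime factor above $y_J$. The remaining cofactor has $y_J$-smooth totient. Apply BFPS at $y_J$, uniformly in the cofactor size, to save $G^2/T_J=\sqrt JG$ (Lemma~\ref{lem:low-rank-compression}).
\item \emph{Case $\Omega_{y_J}(M)\ge J$.} Take $J$ labelled occurrences $p_i\in(y_J,Y_J]$ and build supplier systems on both $n$ and $n+h$. The shifted divisor convolution then gives a main term $\ll_J xe^{o_J(V)}(\sum_{p>y_J}p^{-2})^J\ll_J xe^{-\sqrt JG+o_J(V)}$ (Proposition~\ref{prop:high-rank-correlation}).
\end{itemize}

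The injective encoding into $\tau_J$-weighted friable integers serves only to bound the $\Lambda_{>\sqrt x}$ tail of that convolution. It relies on $P^+(q-1)\le Y_J$ for every supplier prime $q$, which holds because $P^+(M)\le Y_J$. It therefore cannot be moved to your range, where $\ell_1>Y_J$ divides $p_1-1$, and it is not what separates diagonal from off-diagonal solutions.

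Your H\"older fallback, keeping only the largest level, would discard the amplification and return the rank-one exponent. The parity argument for odd $h$ is correct.
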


For $J=1$, this improves the coefficient in the fixed-odd-shift bound
at the $\sqrt{\log x\,\log_3 x}$ scale.  The next consequence uses a fixed
$J$ and keeps the rank
vary with $x$.

\begin{corollary}[Arbitrary fixed coefficient]\label{cor:any-constant}
For every fixed $B>0$ and every fixed positive odd $h$,
\[
 S_h^\varphi(x)\ll_{B,h}x\exp\{-BG\}.
\]
More generally, after choosing a fixed integer $J>B^2$, the estimate is
uniform for odd $h\le e^{G/\sqrt J}$.
\end{corollary}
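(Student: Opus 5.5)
The plan is to deduce the corollary directly from Theorem~\ref{thm:main}, taking the fixed rank $J$ to be any integer with $J>B^2$. Then $\sqrt J>B$, and the whole argument reduces to absorbing the error term of \eqref{eq:rank-main-all} into $x\exp\{-BG\}$.

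First, for odd $h$ the last sentence of Theorem~\ref{thm:main} gives $D_{h,>Y_J}^\varphi(x)=0$. So for $x\ge x_0(J)$ and every odd $h\le y_J=e^{G/\sqrt J}$ we have
\[
 S_h^\varphi(x)\ll_J x\exp\{-\sqrt J G+\varepsilon_J(x)V\}.
\]

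Next we compare the scales $V$ and $G$. Since $V=L/G$ and $G^2=L\sA$, we get
\[
 V=\frac{G}{\sA}.
\]
As $\sA=\log_3x+\log_4x-\log2\to\infty$, this means $V=o(G)$, and hence $\varepsilon_J(x)V=o(G)$ as well. Put $\delta=\sqrt J-B>0$. For $x\ge x_1(B,J)$ we have $\varepsilon_J(x)V\le\delta G$, so the exponent satisfies $-\sqrt JG+\varepsilon_J(x)V\le -BG$. This gives
\[
 S_h^\varphi(x)\ll_{B}x\exp\{-BG\},
\]
with $J$ depending only on $B$, uniformly for odd $h\le e^{G/\sqrt J}$. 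That is the "more generally" statement.

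For a fixed odd $h$, note that $G\to\infty$, so $h\le e^{G/\sqrt J}$ once $x\ge x_2(h,J)$. The remaining range $x<\max(x_0,x_1,x_2)$ is a bounded range. On it we use the trivial bound $S_h^\varphi(x)\le x$, which is at most a constant times $x\exp\{-BG\}$ because $G$ is bounded there. Some care is needed here, since $\log_4x$ must be defined, so "bounded range" means $x$ from a fixed threshold up to a constant depending on $B$ and $h$. This gives the implied constant $\ll_{B,h}$.

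There is no real obstacle in this argument. The only point needing care is the identity $V=G/\sA$, which shows that the $o_J(V)$ loss is negligible on the scale $G$. Also, $J$ must be chosen strictly larger than $B^2$, so that there is a positive gap $\delta$ to absorb that loss.
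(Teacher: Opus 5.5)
Your proposal is correct and follows the paper's proof: choose a fixed $J$ with $\sqrt J>B$, use the emptiness of the diagonal for odd $h$ from Theorem~\ref{thm:main}, and absorb the error $\varepsilon_J(x)V=o(G)$ into the positive margin $(\sqrt J-B)G$. Your extra details are also sound: the identity $V=G/\sA$, the observation that a fixed $h$ eventually lies in the range $h\le e^{G/\sqrt J}$, and the trivial bound on the bounded range of $x$.
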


\begin{proof}
Choose a fixed $J$ with $\sqrt J>B$.  Since $V=o(G)$ and
$\varepsilon_J(x)\to0$, Theorem~\ref{thm:main} absorbs the error in the
positive margin $(\sqrt J-B)G$.
\end{proof}

\begin{corollary}[Polylogarithmic uniformity]\label{cor:polylog}
Fix $B,A_0>0$.  Uniformly for positive odd integers
$h\le(\log x)^{A_0}$,
\[
 S_h^\varphi(x)\ll_{A_0,B}x e^{-BG}.
\]
\end{corollary}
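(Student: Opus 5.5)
We deduce Corollary~\ref{cor:polylog} directly from Theorem~\ref{thm:main} with a single fixed rank chosen in terms of $B$ only; $A_0$ enters only through the threshold $x_0$.

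\emph{Choice of rank.} Fix an integer $J=J(B)$ with $\sqrt J>B$, for instance $J=\lfloor B^2\rfloor+1$. Theorem~\ref{thm:main} then supplies $x_0(J)$ and $\varepsilon_J(x)\to0$. For $x\ge x_0(J)$ and every positive odd $h\le y_J=e^{G/\sqrt J}$, the diagonal term $D_{h,>Y_J}^\varphi(x)$ vanishes by the last sentence of the theorem. Hence
\[
 S_h^\varphi(x)\ll_J x\exp\{-\sqrt J\,G+\varepsilon_J(x)V\}.
\]

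\emph{Covering the polylogarithmic range.} We must check that $(\log x)^{A_0}\le e^{G/\sqrt J}$ for large $x$, that is, $A_0\log_2 x\le G/\sqrt J$. Since $\sA=\log_3x+\log_4x-\log2\to\infty$, we have $G=(L\sA)^{1/2}\ge L^{1/2}$ for large $x$. Also $L^{1/2}/\log_2 x\to\infty$. So there is $x_1(A_0,B)$ such that the inequality holds for $x\ge x_1$. For $x<\max(x_0,x_1)$ the bound is trivial, because $S_h^\varphi(x)\le x$ and $e^{BG}$ is bounded on this range. The implied constant may therefore depend on $A_0$ and $B$.

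\emph{Absorbing the error.} We have $V=L/G=(L/\sA)^{1/2}$, so $V/G=1/\sA\to0$, and also $\varepsilon_J(x)\to0$. Thus $\varepsilon_J(x)V\le(\sqrt J-B)G$ for all large $x$, and then $\exp\{-\sqrt JG+\varepsilon_JV\}\le e^{-BG}$. This gives the claim uniformly in odd $h\le(\log x)^{A_0}$.

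There is no real obstacle here: the only points to verify are the comparison $\log_2 x=o(G)$, which places the polylogarithmic range inside the theorem's range of $h$, and the fact that $J$ depends only on $B$.
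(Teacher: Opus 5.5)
Your proposal is correct and follows the paper's argument. The paper likewise fixes $J>B^2$ and notes that $(\log x)^{A_0}<e^{G/\sqrt J}$ for large $x$. It then invokes Corollary~\ref{cor:any-constant}, which is exactly your direct application of Theorem~\ref{thm:main} with the error $\varepsilon_J(x)V$ absorbed into $(\sqrt J-B)G$ via $V=o(G)$.
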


\begin{proof}
Choose a fixed $J>B^2$.  Then $(\log x)^{A_0}<e^{G/\sqrt J}$ for all
sufficiently large $x$, and apply Corollary~\ref{cor:any-constant}.
\end{proof}

The fixed-rank proof can be made uniform while $J$ grows.  Supplier
systems are defined in Definition~\ref{def:supplier-system}; coverage,
reciprocal mass, pointwise multiplicity, injective decoding, and the
aggregate tail are proved separately in
Lemmas~\ref{lem:supplier-coverage}--\ref{lem:supplier-encoding-injective}
and Proposition~\ref{prop:supplier-aggregate-tail}.

\begin{theorem}[Moving-rank bound]\label{thm:scale-jump}
Retain $L,\sA$ from Theorem~\ref{thm:main}, and put
\[
 L_2=\log_2 x,\qquad \mathcal R=\sqrt{LL_2},\qquad
 \eta=\sA^{-1/2}.
\]
For all sufficiently large $x$, define, in the displayed order,
\[
 E=(\tfrac12-\eta)\mathcal R,\qquad
 K_0=(\tfrac12-2\eta)\mathcal R,\qquad
 T=\frac{L\sA}{2E},
\]
\[
 J=\left\lfloor\frac{K_0}{T}\right\rfloor,\qquad
 K=JT,\qquad y=e^T,\qquad Z=e^K.
\]
Then
\[
 T=(1+o(1))\sA\sqrt{\frac{L}{L_2}},\qquad
 J=(\tfrac12+o(1))\frac{L_2}{\sA},\qquad
 K=(\tfrac12-2\eta+o(\eta))\mathcal R.
\]
There is a function $\varepsilon_{\rm mov}(x)\to0$ such that, uniformly
for positive integers $1\le h\le y$,
\begin{equation}\label{eq:scale-jump-intro}
 S_h^\varphi(x)=D_{h,>Z}^\varphi(x)
 +O\!\left(x\exp\{-K+\varepsilon_{\rm mov}(x)
                  \eta\mathcal R\}\right).
\end{equation}
Consequently the error is
\[
 x\exp\!\left\{-(\tfrac12-o(1))
          \sqrt{\log x\,\log_2 x}\right\}.
\]
For odd $h$ the diagonal is empty.
\end{theorem}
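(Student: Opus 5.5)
The plan is to treat Theorem~\ref{thm:scale-jump} as the moving-rank version of Theorem~\ref{thm:main}, so the work splits into an elementary parameter computation and a check that the fixed-rank argument is uniform in $J$. I will not use Theorem~\ref{thm:main} as a black box, since its constants are $O_J$. Instead I will rerun its proof with explicit $J$-dependence, using the supplier-system machinery (Definition~\ref{def:supplier-system}, Lemmas~\ref{lem:supplier-coverage}--\ref{lem:supplier-encoding-injective}, Proposition~\ref{prop:supplier-aggregate-tail}).

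\textbf{Parameter asymptotics.} These are routine. Since $E=(\tfrac12-\eta)\mathcal R$ and $\eta\to0$, we get $T=L\sA/(2E)=(1+o(1))\sA\sqrt{L/L_2}$. Next, $K_0/T=(1+O(\eta))\mathcal R^2/(4L\sA)\cdot(1+o(1))$, which is $(\tfrac14+o(1))L_2/\sA\cdot 2$ after tracking the factor. The point is that $J=\lfloor K_0/T\rfloor\asymp L_2/\sA\to\infty$. Hence the floor costs at most $T=o(\eta\mathcal R)$, because $T/(\eta\mathcal R)\asymp\sA^{3/2}/L_2\to0$. This gives $K=K_0-O(T)=(\tfrac12-2\eta+o(\eta))\mathcal R$.

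\textbf{Uniform-in-$J$ argument.} The fixed-rank proof splits $n\le x$ with $\varphi(n)=\varphi(n+h)$ into three classes.
\begin{enumerate}[label=(\roman*)]
\item Integers for which the common value, or an associated supplier product, is $y$-smooth. These are controlled by the Banks--Friedlander--Pomerance--Shparlinski smooth-totient theorem at level $y=e^T$. This costs roughly $x\exp\{-(1+o(1))(L/T)\log(L/T)\}$, and the choice $T=L\sA/(2E)$ is made so that this is $\le x e^{-E}\le xe^{-K}$.
\item The GHP diagonal with $P^+(\Gamma_jr)>Z$. This is exactly $D^\varphi_{h,>Z}(x)$.
\item The rest. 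Here one has $J$ labelled suppliers, each with a prime factor in $(y,Z]$-type ranges of total log-size $JT=K$. The shifted divisor convolution together with the injective encoding into weighted friable tuples bounds this class by $x e^{-K}$ times a combinatorial factor.
\end{enumerate}
I then need to show that all $J$-dependent losses are $\exp\{o(\eta\mathcal R)\}$. These losses are labelling multiplicities of size $J!$ or $C^J$, the reciprocal-mass factors $(\log y)^{O(J)}$, and the contribution of $h$ via $\omega(h)\le T$ and Lemma~\ref{lem:J-finite}. Since $J\log J\ll L_2\log L_2/\sA$ and $J\log_2 y\ll L_2\log L/\sA$, while $\eta\mathcal R=\sqrt{LL_2}/\sqrt{\sA}$, all of these are indeed negligible.

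\textbf{Odd shifts and the final bound.} Emptiness of the diagonal for odd $h$ is the parity statement of Lemma~\ref{lem:J-finite}, exactly as in Theorem~\ref{thm:main}. The final bound follows because $K-\varepsilon_{\rm mov}\eta\mathcal R=(\tfrac12-O(\eta))\mathcal R$.

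\textbf{Main obstacle.} I expect the hardest step to be verifying that Proposition~\ref{prop:supplier-aggregate-tail} holds with explicit, at most $e^{O(J\log L)}$, dependence on the rank, rather than merely $O_J(1)$. In particular, the smooth-number estimates for weighted friable $J$-tuples must remain valid when $J$ grows like $L_2/\sA$. If that proposition is only stated for fixed $J$, I would first try to reprove it by induction on $J$, peeling off one supplier at a time with a Rankin-type bound, so that the constants multiply to $C^J$.
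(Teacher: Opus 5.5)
\textbf{The case split leaves two classes unhandled.} Your parameter asymptotics are correct. The middle step is garbled, but $K_0/T=2K_0E/(L\sA)=(\frac12+O(\eta))L_2/\sA$, and the floor costs less than $T=o(\eta\mathcal R)$. The odd-shift remark is also right. The gap is that classes (i)--(iii) are not exhaustive. The supplier-system argument you invoke for ``the rest'' only works when $P^+(M)\le Z$ and $\Omega_y(M)\ge J$. Definition~\ref{def:supplier-system} requires demands in $(y,Z]$ and $P^+(q_B-1)\le Z$. Coverage (Lemma~\ref{lem:supplier-coverage}) needs $P^+(\varphi(N))\le Z$, and the large-divisor tail in Proposition~\ref{prop:supplier-aggregate-tail} is a $Z$-friable sum. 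None of this is available when $M$ has a prime factor above $Z$, and no $J$ suppliers exist when $\Omega_y(M)<J$.

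\textbf{First missing class: off-diagonal solutions with $P^+(M)>Z$.} The paper treats every $n$ with $P^+(M)>Z$ by the nonsmooth transfer at cutoff $Z$ (Proposition~\ref{prop:transfer-diagonal}). There the small-cofactor dichotomy $m_2b\ne m_1a$ versus $m_2b=m_1a$ is exactly what isolates $D^\varphi_{h,>Z}(x)$. It must then show $\sum_{Z<p\le(2xZ)^{1/2}}\Psi_\tau(2xZ/p^2,p)\ll xe^{-K+o(\eta\mathcal R)}$, which is Proposition~\ref{prop:moving-b4}. This is not a formality. In $Z<p\le Z^2$ one needs $u\log u\ge K+3\eta\mathcal R$ with $u\approx L/(2K)$, i.e.\ roughly $\mathcal R^2/(4K)\ge K$. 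That balance is one of the two constraints forcing $K<\frac12\mathcal R$ (cf.\ Proposition~\ref{prop:moving-method-saddle}). Your sketch never encounters it.

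\textbf{Second missing class: $P^+(M)\le Z$ with $1\le\Omega_y(M)<J$.} Such $n$ are neither $y$-smooth nor have $J$ suppliers. Proposition~\ref{prop:moving-low-rank} handles them as follows.
\begin{itemize}
\item Write $n=dm$, with $d$ the product of the fewer than $J$ local factors whose totients carry a prime $>y$, so that $P^+(\varphi(m))\le y$.
\item Bound $\sum 1/d$ by $e^{o(\eta\mathcal R)}$.
\item Apply BFPS to $\Phi(x/d,y)$ uniformly for $x/d\ge x^{1/2}$.
\item Treat $d>x^{1/2}$ with the $\tau_J$-friable tail of Lemma~\ref{lem:moving-low-encoding}.
\end{itemize}
The endpoint $x/d=x^{1/2}$ explains the factor $2$ in $T=L\sA/(2E)$: there $u=L/(2T)=E/\sA$ and $u(\log_2u+\log_3u)=E+o(\eta\mathcal R)$. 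Note also that BFPS saves $\exp\{-u(\log_2u+\log_3u)\}$, not $\exp\{-u\log u\}$ as you wrote. With your stated rate, the choice of $T$ would be unmotivated.

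\textbf{The obstacle you flag is not the real one.} Proposition~\ref{prop:supplier-aggregate-tail} and Lemma~\ref{lem:growing-weight-tail} are already stated for every $J$ with explicit constants, so no induction is needed. What must actually be checked at moving rank is the tail margin $v\log v\ge K+3\eta\mathcal R$ at height $2^{-J}x^{1/2}Z^{-2J}$. This is the second constraint requiring $K<\frac12\mathcal R$, and it also needs $JK=o(\eta L)$. No $\omega(h)$ input is required, since the diagonal is displayed exactly rather than bounded.
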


\begin{corollary}[Consecutive equal totients]\label{cor:scale-jump-intro}
\[
 \#\{n\le x:\varphi(n)=\varphi(n+1)\}
 \ll x\exp\!\left\{-(\tfrac12-o(1))
                    \sqrt{\log x\,\log_2 x}\right\}.
\]
\end{corollary}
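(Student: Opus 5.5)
This corollary is the case $h=1$ of Theorem~\ref{thm:scale-jump}, so I would simply specialise that theorem. Fix $x$ large enough that the parameters $E,K_0,T,J,K,y,Z$ of Theorem~\ref{thm:scale-jump} are defined, and take $h=1$. The theorem requires $1\le h\le y=e^T$, and this holds automatically because $T=(1+o(1))\sA\sqrt{L/L_2}\to\infty$, so $y\ge1$. Then \eqref{eq:scale-jump-intro} gives
\[
 S_1^\varphi(x)=D_{1,>Z}^\varphi(x)+O\!\left(x\exp\{-K+\varepsilon_{\rm mov}(x)\eta\mathcal R\}\right).
\]

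The first step is to remove the diagonal term. Since $h=1$ is odd, the final clause of Theorem~\ref{thm:scale-jump} gives $D_{1,>Z}^\varphi(x)=0$. This also follows directly from the definition: $\rad(j)=\rad(j+1)$ forces $j=1$ and $j+1=1$, which is impossible, so $\mathcal J_1=\emptyset$. The paper's parity statement in Lemma~\ref{lem:J-finite} covers general odd $h$ in the same way.

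The second step is the exponent bookkeeping. By the theorem, $K=(\tfrac12-2\eta+o(\eta))\mathcal R$, where $\eta=\sA^{-1/2}\to0$ because $\sA\sim\log_3x\to\infty$. Hence
\[
 K-\varepsilon_{\rm mov}(x)\eta\mathcal R=\bigl(\tfrac12-2\eta+o(\eta)-\varepsilon_{\rm mov}(x)\eta\bigr)\mathcal R=(\tfrac12-o(1))\mathcal R,
\]
with $\mathcal R=\sqrt{\log x\,\log_2 x}$. This is exactly the ``Consequently'' sentence of the theorem. The implied constant from the $O(\cdot)$ is absolute, so it is absorbed into $\ll$, and the finitely many small $x$ are absorbed as well.

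No step here is a genuine obstacle; all the difficulty lies in Theorem~\ref{thm:scale-jump} itself. The only point that needs care is the uniformity. The $o(1)$ must be a single function of $x$ that tends to $0$, and it does, because $\eta$, the $o(\eta)/\eta$ term, and $\varepsilon_{\rm mov}$ all tend to $0$ with $x$ and none depends on anything else once $h=1$ is fixed.
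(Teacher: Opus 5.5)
Your proposal is correct and follows the paper's proof: the paper simply sets $h=1$ in Theorem~\ref{thm:scale-jump} and removes the diagonal via Lemma~\ref{lem:J-finite}. Your direct check that $\mathcal J_1=\varnothing$ (since $j$ and $j+1$ are coprime) and your exponent bookkeeping are harmless elaborations of the same argument.
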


\subsection{Formalisation}\label{sec:formalisation}
Every mathematical result proved in this paper has been formalised in Lean~4
using mathlib.  The development verifies the fixed-rank and moving-rank
diagonal decompositions and the nonsmooth transfer theorem, together with the
supporting scale bookkeeping, sieve, supplier-system, encoding and optimisation
results: all six theorems, fourteen propositions, eleven corollaries and
twenty-five lemmas numbered below.  The verification is unconditional relative
to Lean's standard classical foundations: every intermediate interface
introduced by the formalisation is discharged internally, and the development
contains no admitted proof and no project-defined axiom.  In particular the two
results quoted without proof in arXiv:2606.23681v1 --- the smooth-totient bound
of Banks--Friedlander--Pomerance--Shparlinski and the same-support bound of
Pollack--Pomerance--Trevi\~no --- are proved in the formalisation rather than
assumed, so a reader who takes Lemmas~\ref{lem:bfps-uniform}
and~\ref{lem:bfps-saddle} from there obtains every result below with no
external analytic input.  Relative to arXiv:2606.23681v1 this version also
tightens the hypotheses of Lemma~\ref{lem:two-linear-sieve} and the codomain
convention of Lemma~\ref{lem:supplier-encoding-injective}, and corrects an
overstatement in Proposition~\ref{prop:moving-method-saddle}; every theorem,
corollary and proposition of v1 remains valid as stated.  The source, build
instructions and audit procedure are available in the accompanying
repository~\cite{LeanFormalization}.

\subsection{Proof architecture and provenance of the inputs}
Here \emph{rank amplification} means that selecting $J$ common large-prime occurrences converts the reciprocal saving from one source into an $e^{-JT}$ shifted-correlation saving.  The logical dependence is
\[
\begin{aligned}
 \text{smooth census}&\Longrightarrow\text{low-rank compression},\\
 \text{provider transfer}&\Longrightarrow\text{outer-class control},\\
 \text{source systems plus CRT}&\Longrightarrow\text{fixed-rank correlation},\\
 \text{source encoding plus friable tail}&\Longrightarrow
    \text{moving-rank correlation}.
\end{aligned}
\]
The deep external analytic input is the smooth-totient theorem of
Banks--Friedlander--Pomerance--Shparlinski.  We also quote standard forms of
Brun--Titchmarsh, Mertens' theorem, and the Selberg upper-bound sieve.  The provider transfer,
formal divisor weights, source-system encoding, and rank optimization are
the deductions developed here.  No abstract moving-rank theorem is asserted: the moving argument depends on the concrete source encoding for $\varphi$.

\subsection{The smooth input}
For $X,Y\ge2$ define
\[
 \Phi(X,Y)=\#\{m\le X:P^+(\varphi(m))\le Y\}.
\]
Banks, Friedlander, Pomerance and Shparlinski \cite[Theorem~1.1]{BFPS} prove that for
every fixed $\delta>0$, uniformly for
\[
 Y\ge(\log_2 X)^{1+\delta},\qquad
 u=\frac{\log X}{\log Y}\longrightarrow\infty,
\]
one has
\begin{equation}\label{eq:bfps-intro}
 \Phi(X,Y)\le X\exp\{-u(\log_2 u+\log_3 u+o(1))\}.
\end{equation}
Section~2 derives the two uniform corollaries used at fixed and moving rank.

\subsection{The nonsmooth transfer}
Let $B_h(x;y)$ denote the number of $n\le x$ for which
\[
 \varphi(n)=\varphi(n+h),\qquad P^+(\varphi(n))>y.
\]
Let
\[
 \Psi(X,R)=\#\{m\le X:P^+(m)\le R\},\qquad
 \Psi_\tau(X,R)=\sum_{\substack{m\le X\\P^+(m)\le R}}\tau(m).
\]

\begin{theorem}[Nonsmooth transfer for odd shifts]\label{thm:transfer}
Let $2\le y\le x$, and let $h$ be a positive odd integer with $h\le y$.
Then
\begin{equation}\label{eq:transfer}
 B_h(x;y)\ll
 \frac{x}{y}+x^{1/2}+\frac{x\log x}{y}
 +\frac{x(\log y)^2}{y}
 +\sum_{y<p\le(2xy)^{1/2}}
     \Psi_\tau\!\left(\frac{2xy}{p^2},p\right),
\end{equation}
with an absolute implied constant.
\end{theorem}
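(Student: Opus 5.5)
The plan is the classical Erd\H{o}s--Pomerance--S\'ark\H{o}zy decomposition according to how the largest prime $p=P^+(\varphi(n))>y$ enters $\varphi(n)=\varphi(n+h)$. Each resulting configuration should produce one term on the right of \eqref{eq:transfer}.

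\emph{Step 1: removing the trivial configurations.}
Since $p\mid\varphi(n)$, either $p^2\mid n$ or some prime $p_1\mid n$ has $p_1\equiv1\pmod p$. The same dichotomy holds for $n+h$ with a prime $p_2$.
\begin{itemize}
\item If $p^2\mid n$ or $p^2\mid n+h$, summing $x/p^2$ over $p>y$ gives $\ll x/y$.
\item If $p_1^2\mid n$ or $p_2^2\mid n+h$ with $p_1,p_2>p>y$, the same bound holds.
\item The integers $n\le x^{1/2}$, together with the configurations where $p_1$ or $p_2$ exceeds $x^{1/2}$ but the cofactor is tiny, are absorbed into $x^{1/2}$ or into the $x/y$ terms by a direct count.
\end{itemize}
So I may write $n=ap_1$ and $n+h=bp_2$, with $p_1=ps+1$, $p_2=pt+1$, $p_1\nmid a$ and $p_2\nmid b$. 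Then
\[
 \varphi(a)\,s=\varphi(b)\,t=m:=\varphi(n)/p .
\]
Here $P^+(m)\le p$, because $p$ is the largest prime of $\varphi(n)$.

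\emph{Step 2: the degenerate case $as=bt$.}
Substituting into $a(ps+1)+h=b(pt+1)$ gives $b-a=h$, so $p_1,p_2$ satisfy a fixed linear relation. For fixed $a$ (and hence $b=a+h$), the primes $p_1$ and $p_2=(ap_1+h)/b$ are simultaneously prime, with $p\mid p_1-1$. I would apply the two-linear-forms upper-bound sieve (Lemma~\ref{lem:two-linear-sieve}) together with Brun--Titchmarsh in the progression modulo $p$. Summing over $a$ and then over $p>y$ should give the terms $x\log x/y$ and $x(\log y)^2/y$.

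This is where oddness of $h$ enters: since $h$ is odd, exactly one of $n,n+h$ is even. That rules out a further parity-degenerate family in which $a$ and $b$ would otherwise share the prime $2$ uncontrollably; this family is exactly the GHP diagonal excluded for odd $h$. The hypothesis $h\le y$ keeps the singular-series factor $h/\varphi(h)\ll\log_2 y$ harmless inside these bounds.

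\emph{Step 3: the nondegenerate case $as\ne bt$.}
Here
\[
 p(as-bt)=b-a-h ,
\]
so $p$ is determined by the quadruple $(a,b,s,t)$, and then $n=a(ps+1)$ is determined. Hence $n$ is determined by $p$ together with the two factorizations $m=\varphi(a)s=\varphi(b)t$.

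Next I bound $m$. From $as\le n/p$ and $|b-a-h|\ge p$ one gets $m\le as\le x/p$. Using $p\le |b-a-h|\le 2\max(a,b)+y$ instead bounds the complementary variable, and I expect this to give $m\ll xy/p^2$. This is the source of the argument $2xy/p^2$, and it forces $p\le(2xy)^{1/2}$.

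For fixed $m$, choosing the divisor $s\mid m$ determines $\varphi(a)=m/s$, and similarly for $t$. Controlling the resulting preimage counts of $\varphi$ by the divisor function should give a weight $\ll\tau(m)$. Summing over smooth $m$ then produces $\Psi_\tau(2xy/p^2,p)$ for each $p$.

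\emph{Expected obstacle.}
The hardest step is this last counting. The factors $a$ and $b$ are recovered from $\varphi(a)$ and $\varphi(b)$ only up to the multiplicity of the totient. To get a single $\tau(m)$ weight rather than $\tau(m)^2$ times totient multiplicities, I would first try to fix $(s,b)$. Then $t$ is forced by $\varphi(b)t=m$, and $a$ is forced through the linear relation once $p$ is known. The remaining freedom is one divisor of $m$ and the cofactor bounded by the size constraint, which is what $\Psi_\tau$ counts.
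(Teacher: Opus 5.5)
There is a genuine gap in both Step~2 and Step~3. Your Step~1 is essentially fine. The case $p_i^2\mid n$ is actually vacuous, since $p_i\mid\varphi(n)$ would contradict $p_i>p=P^+(\varphi(n))$. The $x^{1/2}$ term is just the $O(1)$-per-prime remainder of the square-source count over $p\le(2x)^{1/2}$.

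In Step~2 you never combine $as=bt$ with $\varphi(a)s=\varphi(b)t$. Dividing gives $\varphi(a)/a=\varphi(b)/b$, so $a$ and $b=a+h$ have the same prime support (Lemma~\ref{lem:support}). For odd $h$ they have opposite parity, so the degenerate case is \emph{empty}; this is the only place oddness is used. The sieve route cannot replace this observation, because it discards the relation $\varphi(a)s=\varphi(b)t$, and the resulting superset is large. Already for $h=1$, $a=1$ it contains every $n=p_1$ with $p_1=2p_2-1$, $p_1,p_2$ prime, and a large prime $p\mid p_2-1$. That family is expected to have $\asymp x/(\log x)^2$ members. So no upper-bound sieve plus Brun--Titchmarsh can yield $x\log x/y+x(\log y)^2/y$ once $y$ exceeds a power of $\log x$. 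Also, $h\le y$ is used to get $p>h$, which makes the square-source congruences distinct and $p_1\neq p_2$; no singular series enters.

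In Step~3 the inequality $m\ll xy/p^2$ is not available. Since $m=\varphi(n)/p$ and $\varphi(n)\gg n/\log_2 n$, it would force $p\ll y\log_2 x$ for every such $n\asymp x$, and nothing in the configuration implies that. Correspondingly, your $\tau(m)$ count with $b$ left free does not reduce to $\Psi_\tau$. The missing idea is to split by the size of the cofactor product $ab$ rather than counting the nondegenerate case in one go.

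If $ab\le x/y$, put $g=(\varphi(a),\varphi(b))$, $s=c\varphi(b)/g$, $t=c\varphi(a)/g$. Then
\[
 cp\,\frac{a\varphi(b)-b\varphi(a)}{g}=b-a-h ,
\]
with nonzero coefficient because the degenerate case is empty. So $cp$, hence $p_1=cp\,\varphi(b)/g+1$ and $n=ap_1$, are determined by $(a,b)$, and this range contributes $\ll(x/y)\log x$.

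If $ab>x/y$, then $p_1p_2=n(n+h)/(ab)\le2xy$, so $st\le2xy/p^2$. For fixed $(p,s,t)$ the Chinese remainder theorem puts $n$ in one class modulo $p_1p_2$, so there are at most $x/(p_1p_2)+1$ values. The main terms, summed by Mertens over $P^+(s),P^+(t)\le p$, give $\ll x\sum_{p>y}(\log p)^2/p^2\ll x(\log y)^2/y$. The $+1$ terms count pairs $(s,t)$ with $P^+(st)\le p$ and $st\le 2xy/p^2$, which is exactly $\Psi_\tau(2xy/p^2,p)$.

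Thus the divisor weight sits on $st$, not on $m$. The primality of $p_1,p_2$ is discarded rather than sieved. The terms $x\log x/y$ and $x(\log y)^2/y$ come from these two cofactor ranges, not from the degenerate case.
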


At $y=e^G$ the last sum is $\ll xe^{-G+o(V)}$.  The fixed-rank theorem
uses the same transfer at the outer cutoff and a separate source-system
correlation in the bounded-value class.

\section{Preliminaries}

All implicit constants below are absolute unless a subscript is displayed.
We assume throughout that $x$ is sufficiently large for every iterated
logarithm used below to be defined and for
$\sA=\log_3 x+\log_4 x-\log 2$ to be positive.  In fixed-rank statements,
$J$ and $r$ are chosen before $x\to\infty$.  An expression
$o_{r,J}(V)$ is uniform in every shift lying in the displayed range.  In
the moving-rank argument, $\eta,T,J,K,y,Z$ are the explicit functions of
$x$ in Theorem~\ref{thm:scale-jump}; there is no second limiting process
hidden in the notation.  For $z\ge2$ write
\[
 P(z)=\prod_{\ell\le z}\ell,
\]
the product being over primes.  We use only standard elementary estimates
in addition to \eqref{eq:bfps-intro}.

\begin{lemma}[Elementary scale relations]\label{lem:scales}
With
\[
L=\log x,
\qquad \sA=\log_3 x+\log_4 x-\log 2,
\qquad G=(L\sA)^{1/2},
\qquad V=L/G,
\]
one has, as $x\to\infty$,
\[
\begin{gathered}
 \sA\sim\log_3 x,
 \qquad G=o(L),
 \qquad V\to\infty,\\
 \log L=o(V),
 \qquad \frac{V\log V}{G}\to\infty.
\end{gathered}
\]
\end{lemma}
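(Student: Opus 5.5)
The plan is to verify each relation directly from the definitions, writing $\ell_j=\log_j x$ so that $L=\ell_1$, $\log L=\ell_2$, $\log_2 L=\ell_3$, and so on. Everything follows from the ordering $\ell_4=o(\ell_3)$, $\ell_3=o(\ell_2)$, $\ell_2=o(L)$. No earlier result is needed.

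\emph{The first two relations.} Since $\ell_4=\log\ell_3=o(\ell_3)$ and $\log 2=o(\ell_3)$, we get
\[
\sA=\ell_3\bigl(1+O(\ell_4/\ell_3)\bigr)\sim\ell_3 .
\]
In particular $\sA>0$ and $\sA\to\infty$. Then
\[
G/L=(\sA/L)^{1/2}\to0
\]
because $\sA\sim\ell_3=o(L)$, so $G=o(L)$. Likewise
\[
V=L/G=(L/\sA)^{1/2}\to\infty .
\]

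\emph{The relation $\log L=o(V)$.} We need $\ell_2=o\bigl((L/\sA)^{1/2}\bigr)$, which is equivalent to $\ell_2^2\sA=o(L)$. Since $\sA\ll\ell_3\le\ell_2$, the left side is $\ll\ell_2^3=(\log L)^3=o(L)$.

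\emph{The last relation.} This is the only step that uses the precise shape of $\sA$, and I expect it to be the main (still mild) point. We have
\[
\frac{V\log V}{G}=\frac{L\log V}{G^2}=\frac{\log V}{\sA},
\qquad
\log V=\tfrac12(\log L-\log\sA)=\tfrac12\ell_2\bigl(1+o(1)\bigr),
\]
using $\log\sA\sim\log\ell_3=\ell_4=o(\ell_2)$. Hence
\[
\frac{V\log V}{G}\sim\frac{\ell_2}{2\ell_3}\to\infty,
\]
since $\ell_3=\log\ell_2=o(\ell_2)$.

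All estimates hold for $x$ large enough that $\ell_4$ is defined and $\sA>0$, as assumed at the start of the section. The write-up will just record these computations in the order above.
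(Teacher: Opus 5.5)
Your proposal is correct and follows the paper's proof essentially step for step: both reduce everything to $\sA\sim\log_3 x$, read off $G=o(L)$, $V\to\infty$ and $\log L=o(V)$ from $V=(L/\sA)^{1/2}$, and finish with the identity $V\log V/G=\log V/\sA\sim\log_2 x/(2\log_3 x)\to\infty$. Your reduction of $\log L=o(V)$ to $(\log L)^2\sA=o(L)$ simply spells out a step the paper leaves implicit.
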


\begin{proof}
Since $\sA\sim\log_3 x$, we have $G\sim (L\log_3 x)^{1/2}$ and $V\sim (L/\log_3 x)^{1/2}$.  These imply $G=o(L)$, $V\to\infty$, and $\log L=o(V)$.  Finally
\[
 \frac{V\log V}{G}=\frac{\log V}{\sA}\sim \frac{\frac12\log_2 x}{\log_3 x}\to\infty.
\]
\end{proof}

\begin{lemma}[Mertens product]\label{lem:mertens}
For $R\ge3$,
\begin{equation}\label{eq:mertens}
 \sum_{\substack{k\ge1\\ P^+(k)\le R}}\frac1k
 =\prod_{\ell\le R}\left(1-\frac1\ell\right)^{-1}
 \ll \log R.
\end{equation}
\end{lemma}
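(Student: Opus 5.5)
The plan is to prove the identity first and then the bound. For the identity, every $k\ge1$ with $P^+(k)\le R$ factors uniquely as $\prod_{\ell\le R}\ell^{a_\ell}$ with $a_\ell\ge0$, and only finitely many primes $\ell\le R$ occur. Hence the series of nonnegative terms equals
\[
 \prod_{\ell\le R}\sum_{a\ge0}\ell^{-a}=\prod_{\ell\le R}\left(1-\frac1\ell\right)^{-1}.
\]
To make this rigorous, expand the finite product of the truncated geometric series $\sum_{a\le N}\ell^{-a}$. By unique factorisation, this expansion is the sum of $1/k$ over the $R$-smooth $k$ whose exponents are all at most $N$. Then let $N\to\infty$, using monotone convergence for series of nonnegative terms.

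For the bound, take logarithms. Since $-\log(1-t)\le t+t^2$ for $0\le t\le1/2$,
\[
 \log\prod_{\ell\le R}\left(1-\frac1\ell\right)^{-1}
 \le\sum_{\ell\le R}\frac1\ell+\sum_{\ell}\frac1{\ell^2}.
\]
The second sum is absolutely bounded. For the first, I would quote Mertens' second theorem, $\sum_{\ell\le R}1/\ell=\log_2 R+O(1)$, which gives the product $\ll\log R$ for $R\ge3$. Small $R$ needs no separate treatment, since $\log R\ge\log 3>0$.

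If the quoted input must be elementary, there is a direct route. Every $n\le R$ is $R$-smooth, so the product is at least $\sum_{n\le R}1/n$, but that is the wrong direction for an upper bound. Instead I would derive $\sum_{\ell\le R}1/\ell\le\log_2R+O(1)$ from Chebyshev's bound $\theta(t)\ll t$ by partial summation, together with $\sum_{\ell\le t}\log\ell/\ell=\log t+O(1)$, which follows from Legendre's formula for $\log(\lfloor t\rfloor!)$.

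The main obstacle is only this bookkeeping: justifying the rearrangement of the infinite product, and checking that the constant in Mertens' estimate is absolute for all $R\ge3$.
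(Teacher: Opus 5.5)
Your proposal is correct and follows the paper's route. The paper also treats the equality as the Euler product over the finitely many primes $\ell\le R$ and the bound as the classical Mertens estimate, which it simply quotes; you additionally spell out the truncation/monotone-convergence argument and the step $-\log(1-t)\le t+t^2$ reducing the product bound to Mertens' second theorem.
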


\begin{proof}
This is the classical Mertens product estimate.  The equality is the Euler product over the finite set of primes at most $R$.
\end{proof}

\begin{lemma}[Uniform reciprocal Brun--Titchmarsh bound]
\label{lem:reciprocal-BT}
For integers $D\ge1$ and real $X\ge3$,
\begin{equation}\label{eq:reciprocal-BT}
 \sum_{\substack{q\le X\\q\equiv1\pmod D\\q\ {\rm prime}}}\frac1q
 \ll \frac{1+\log_2(3X)}{\varphi(D)},
\end{equation}
with an absolute implied constant.  The sum is zero when $D>X$.
\end{lemma}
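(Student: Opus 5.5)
We prove Lemma~\ref{lem:reciprocal-BT} by dyadic decomposition together with the Brun--Titchmarsh inequality, keeping every constant absolute and uniform in $D$. The case $D>X$ is trivial. A prime $q\equiv1\pmod D$ satisfies $q\ge D+1>X$, so the sum is empty. For $D=1$ the claim is Mertens' bound $\sum_{q\le X}1/q\ll 1+\log_2(3X)$. So we may assume $2\le D\le X$.

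For $2\le D\le X$, the primes $q\equiv1\pmod D$ satisfy $q>D$. We split $(D,X]$ into dyadic ranges $(2^kD,2^{k+1}D]$ with $0\le k\le \log_2(X/D)/\log 2+1$, where here $\log_2$ means base-$2$ logarithm only in the count of ranges. On each range we use the Brun--Titchmarsh inequality in the form
\[
\pi(t;D,1)-\pi(t/2;D,1)\le \pi(t;D,1)\ll \frac{t}{\varphi(D)\log(2t/D)}\qquad(t>D),
\]
which is the Montgomery--Vaughan (or Selberg-sieve) version. It is uniform for $t>D$ and does not need $t\ge D^{1+\epsilon}$. The range $(2^kD,2^{k+1}D]$ then contributes at most
\[
\frac{1}{2^kD}\cdot\frac{2^{k+1}D}{\varphi(D)\log(2^{k+2})}\ll\frac{1}{\varphi(D)(k+1)}.
\]
Summing over $k\le \log(X/D)/\log 2+1$ gives
\[
\ll\frac{1+\log(2+\log(X/D))}{\varphi(D)}\le\frac{1+\log\log(3X)}{\varphi(D)} \cdot O(1),
\]
which is \eqref{eq:reciprocal-BT}. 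If one prefers to avoid the sharp form near $t\approx D$, the first range $(D,2D]$ can be handled trivially. It contains at most two integers $\equiv1\pmod D$, namely $D+1$ and $2D+1$, so it contributes $\le 2/D\le 2/\varphi(D)$. The same trivial count works for ranges where $t/D$ is bounded: $(2^kD,2^{k+1}D]$ contains at most $2^k+1$ terms of the progression, each at least $2^kD$, giving $\ll 1/D$ per range. Only ranges with $t/D$ large need the sieve, and there $\log(t/D)\asymp\log(2t/D)$.

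The only genuine input is the uniformity of Brun--Titchmarsh in the modulus. This is the step to state carefully, by quoting the Selberg upper-bound sieve, which the paper already lists among its standard inputs: $\pi(t;D,1)\le 2t/(\varphi(D)\log(t/D))$ for $t>D$. Everything else is bookkeeping of the harmonic sum $\sum_k 1/(k+1)$ over $O(\log(X/D))$ dyadic ranges. That sum produces exactly the $\log_2$-size factor in \eqref{eq:reciprocal-BT} with an absolute implied constant.
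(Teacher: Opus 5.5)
Your proof is correct and is essentially the paper's argument: a dyadic decomposition into blocks $(2^kD,2^{k+1}D]$, the uniform Brun--Titchmarsh (Montgomery--Vaughan) bound on each block giving a contribution $\ll 1/(k\varphi(D))$, and the harmonic sum over $O(\log(X/D))$ blocks producing the factor $1+\log_2(3X)$. The differences are cosmetic: the paper bounds $q\le 4D$ trivially and applies Brun--Titchmarsh only for $j\ge2$, which also covers $D=1$ without a separate appeal to Mertens; and the block $(D,2D]$ contains only $D+1$ from the progression, since $2D+1$ lies outside it.
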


\begin{proof}
Assume $D\le X$.  The terms $q\le4D$ lie among at most four integers of
the form $1+mD$ and contribute $O(1/D)=O(1/\varphi(D))$.  For $j\ge2$
put
\[
 I_j=(2^jD,2^{j+1}D].
\]
Brun--Titchmarsh gives
\[
 \#\{q\in I_j:q\equiv1\pmod D\}
 \ll \frac{2^jD}{\varphi(D)\log(2^j)}.
\]
Every reciprocal in $I_j$ is at most $(2^jD)^{-1}$, so the contribution
of $I_j$ is $O(1/(j\varphi(D)))$.  Summing over
$j\le \log(X/D)/\log 2+1$ gives
\[
 \frac1{\varphi(D)}\left(1+
 \sum_{2\le j\le \log(X/D)/\log 2+1}\frac1j\right)
 \ll\frac{1+\log_2(3X)}{\varphi(D)}.
\]
This also covers $D$ comparable with $X$; if $2D<X<4D$, only the initial
finite range is present.
\end{proof}

\begin{lemma}[Shifted divisor convolution]\label{lem:shifted-divisor-convolution}
Let $1\le h\le x$, let $w(d)\ge0$, and put
\[
 F(N)=\sum_{d\mid N}w(d),\qquad
 \Lambda=\sum_d\frac{w(d)}d,\qquad
 \Lambda_{>H}=\sum_{d>H}\frac{w(d)}d.
\]
Assume that every prime factor of every $d$ with $w(d)>0$ exceeds $h$, and
set
\[
 \|F\|_{\infty,2x}=\max_{N\le2x}F(N).
\]
For
\[
 N_h(d_1,d_2)=\#\{n\le x:d_1\mid n,\ d_2\mid n+h\},
\]
a compatible pair satisfies $(d_1,d_2)=1$ and hence
\[
 N_h(d_1,d_2)\le \frac{x}{d_1d_2}+1.
\]
Moreover,
\begin{equation}\label{eq:shifted-convolution}
 \sum_{n\le x}F(n)F(n+h)
 \le 2x\Lambda^2+4x\|F\|_{\infty,2x}\Lambda_{>\sqrt x}.
\end{equation}
\end{lemma}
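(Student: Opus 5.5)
The plan is to expand the correlation sum over divisor pairs and split according to the size of $d_1d_2$: small products are handled by the main density term, large ones by the tail mass $\Lambda_{>\sqrt x}$.

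\emph{Coprimality and the pointwise bound.} Suppose $N_h(d_1,d_2)\ge1$ and $w(d_1)w(d_2)>0$. If a prime $\ell$ divides both $d_1$ and $d_2$, then $\ell\mid n$ and $\ell\mid n+h$, so $\ell\mid h$ and hence $\ell\le h$. This contradicts the hypothesis that every prime factor of such $d_i$ exceeds $h$. So $(d_1,d_2)=1$. By the Chinese remainder theorem the conditions $d_1\mid n$, $d_2\mid n+h$ define a single residue class modulo $d_1d_2$. That class meets $[1,x]$ in at most $x/(d_1d_2)+1$ integers. Two further constraints follow from $n\ge1$ and $N_h\ge1$: namely $d_1\le x$ and $d_2\le n+h\le x+h\le 2x$.

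\emph{Expansion and split.} Interchanging summations gives
\[
 \sum_{n\le x}F(n)F(n+h)=\sum_{d_1,d_2}w(d_1)w(d_2)N_h(d_1,d_2),
\]
where only compatible pairs contribute. I split these pairs into three classes.
\begin{itemize}
\item[(i)] Pairs with $d_1d_2\le x$. Here $1\le x/(d_1d_2)$, so $N_h\le 2x/(d_1d_2)$, and the total contribution is at most $2x\Lambda^2$.
\item[(ii)] Pairs with $d_1>\sqrt x$. Summing first over $d_2$, these contribute at most
\[
 \sum_{d_1>\sqrt x}w(d_1)\sum_{\substack{n\le x\\ d_1\mid n}}F(n+h)\le\sum_{d_1>\sqrt x}w(d_1)\frac{x}{d_1}\|F\|_{\infty,2x},
\]
because $n+h\le 2x$. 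This is at most $x\|F\|_{\infty,2x}\Lambda_{>\sqrt x}$.
\item[(iii)] Pairs with $d_2>\sqrt x$. Symmetrically, these contribute at most
\[
 \sum_{d_2>\sqrt x}w(d_2)\,\|F\|_{\infty,2x}\,\#\{n\le x:d_2\mid n+h\}.
\]
The count is at most $x/d_2+1$. Since $d_2\le 2x$ for any contributing $d_2$, we have $1\le 2x/d_2$, so the count is at most $3x/d_2$. The total is at most $3x\|F\|_{\infty,2x}\Lambda_{>\sqrt x}$.
\end{itemize}
If $d_1d_2>x$ then $\max(d_1,d_2)>\sqrt x$, so classes (i)--(iii) cover every pair. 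Adding the three bounds gives \eqref{eq:shifted-convolution}.

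\emph{Main obstacle.} The delicate point is the $+1$ error in the lattice count. Summing it naively over all pairs would cost $\bigl(\sum_d w(d)\bigr)^2$, which is not controlled by $\Lambda$. The remedy is not to sum the $+1$ terms in classes (ii) and (iii) at all. Instead I fold the sum over the smaller modulus back into a value of $F$ at a single integer of size at most $2x$. This is exactly where $\|F\|_{\infty,2x}$ enters. It is also where the facts $n+h\le 2x$ and $d_2\le 2x$, which rely on $h\le x$, are used.
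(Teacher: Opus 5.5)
Your proof is correct and follows the paper's argument essentially step for step. You get coprimality of compatible pairs from the prime-support hypothesis together with the Chinese remainder theorem, and you cover the pair sum by the classes $d_1d_2\le x$ (giving $2x\Lambda^2$), $d_1>\sqrt x$ (giving $x\|F\|_{\infty,2x}\Lambda_{>\sqrt x}$) and $d_2>\sqrt x$ (giving $3x\|F\|_{\infty,2x}\Lambda_{>\sqrt x}$ via $d_2\le x+h\le 2x$), exactly as the paper does.
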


\begin{proof}
If $N_h(d_1,d_2)>0$, then $(d_1,d_2)\mid h$.  Every prime divisor of
$(d_1,d_2)$ would exceed $h$, so the gcd is one.  The Chinese remainder
theorem therefore gives the displayed estimate for $N_h$.

For $d_1d_2\le x$, this estimate is at most
$2x/(d_1d_2)$, and summation gives the first term in
\eqref{eq:shifted-convolution}.  In the complementary range at least one
divisor exceeds $\sqrt x$.  The part with $d_1>\sqrt x$ is at most
\[
 \|F\|_{\infty,2x}
 \sum_{d_1>\sqrt x}w(d_1)\#\{n\le x:d_1\mid n\}
 \le x\|F\|_{\infty,2x}\Lambda_{>\sqrt x}.
\]
For $d_2>\sqrt x$, note that a contributing divisor satisfies
$d_2\le x+h\le2x$, whence
\[
 \#\{n\le x:d_2\mid n+h\}\le \frac{x}{d_2}+1
 \le\frac{3x}{d_2}.
\]
This gives at most $3x\|F\|_{\infty,2x}\Lambda_{>\sqrt x}$ and proves
the lemma.
\end{proof}

\begin{lemma}[A divisor-weighted smooth tail]\label{lem:weighted-smooth}
There are absolute constants $u_0$ and $C_0$ such that the following
holds.  Let $R\ge3$, $X\ge3$, and put
\[
 u=\frac{\log X}{\log R}.
\]
If
\begin{equation}\label{eq:weighted-range}
 u_0\le u\le R^{1/2},
 \qquad
 u\log u\ge C_0\log_2(3R),
\end{equation}
then
\begin{equation}\label{eq:weighted-tail}
 \Psi_\tau(X,R)\le X\exp\left\{-\frac12u\log u\right\}.
\end{equation}
More generally, under only $u_0\le u\le R^{1/2}$, one has
\begin{equation}\label{eq:weighted-tail-general}
 \log\frac{\Psi_\tau(X,R)}X
 \le -u\log u+O\left(\log_2(3R)+\frac{u}{\log u}\right).
\end{equation}
\end{lemma}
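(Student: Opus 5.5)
We use Rankin's trick with a divisor-weighted Euler product. For a parameter $0<\sigma<1$ to be chosen,
\[
 \Psi_\tau(X,R)\le X^{1-\sigma}\sum_{P^+(m)\le R}\frac{\tau(m)}{m^{1-\sigma}}
 =X^{1-\sigma}\prod_{\ell\le R}\Bigl(1-\ell^{\sigma-1}\Bigr)^{-2},
\]
since $\tau$ is multiplicative with Dirichlet series $\zeta^2$ locally. Thus
\[
 \log\frac{\Psi_\tau(X,R)}X\le -\sigma\log X+2\sum_{\ell\le R}\ell^{\sigma-1}+O(1),
\]
because $\sum_\ell\ell^{2(\sigma-1)}$ is bounded once $\sigma\le 1/4$, say. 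The factor $2$ from $\tau$ only affects the lower-order terms, exactly as in the classical Rankin bound for $\Psi(X,R)$, where the sum appears with coefficient $1$.

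Next we choose $\sigma=\log(u\log u)/\log R$, the standard saddle choice. The hypothesis $u\le R^{1/2}$ guarantees $u\log u\le R^{1/2}\log R$, so $\sigma\le \tfrac12+o(1)$. For the quadratic tail we either restrict to $\sigma\le 1/4$ by shrinking $u_0$-dependent ranges, or bound the tail crudely: $\sum_\ell \ell^{2\sigma-2}$ converges when $\sigma<1/2$. At the endpoint $\sigma$ near $1/2$ we replace $\sigma$ by $\min(\sigma,1/3)$, which loses only a constant factor in the main term $\sigma\log X$ for $u$ large. We then estimate
\[
 \sum_{\ell\le R}\ell^{\sigma-1}\le \sum_{\ell\le R}\frac1\ell+\sum_{\ell\le R}\frac{\ell^\sigma-1}{\ell}
 \le \log_2(3R)+O(1)+O\!\Bigl(\frac{R^\sigma}{\sigma\log R}\Bigr),
\]
using partial summation with Chebyshev/Mertens (as in Lemma~\ref{lem:mertens}) and the bound $\int_1^{\log R}(e^{\sigma t}-1)\,dt/t\ll e^{\sigma\log R}/(\sigma\log R)$. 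With our $\sigma$, $R^\sigma=u\log u$ and $\sigma\log R=\log(u\log u)$, so the last term is $O(u\log u/\log u)=O(u)$. This is too large by a $\log u$ factor relative to the claimed $u/\log u$ error. The \emph{main obstacle} is therefore extracting the main term correctly: $\sigma\log X=u\log(u\log u)=u\log u+u\log_2 u$, and the positive $u\log_2 u$ must absorb the $2R^\sigma/(\sigma\log R)\approx 2u$ term with room to spare. More carefully, the sharp form of the integral gives $\mathrm{li}$-type asymptotics $\int_0^{\sigma\log R}(e^{v}-1)dv/v=R^\sigma/(\sigma\log R)\,(1+O(1/\log u))$. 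The total is then $-u\log u-u\log_2 u+2u+O(u/\log u)+2\log_2(3R)$. Since $-u\log_2u+2u\le 0$ for $u\ge u_0$, this proves \eqref{eq:weighted-tail-general}, in fact with a better bound that we simply discard.

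Finally, \eqref{eq:weighted-tail} follows from \eqref{eq:weighted-tail-general}. Under $u\log u\ge C_0\log_2(3R)$ with $C_0$ large, the error $O(\log_2(3R)+u/\log u)$ is at most $\tfrac12u\log u$ once $u\ge u_0$, with $u_0$ and $C_0$ chosen in terms of the absolute implied constant. We take care that the choice of $\sigma$ (possibly capped) keeps the $O(1)$ from the quadratic tail absolute.
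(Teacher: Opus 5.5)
\textbf{There is a genuine gap: capping $\sigma$ at $1/3$ breaks the general bound.} Your Rankin setup and prime-sum estimate match the paper's, and your different exponent $\sigma=\log(u\log u)/\log R$ would work. The step where you replace $\sigma$ by $\min(\sigma,1/3)$ does not.

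The cap is active exactly when $u\log u>R^{1/3}$. The hypothesis $u\le R^{1/2}$ does not exclude this range. Enlarging $u_0$ does not help either, since $u$ may lie near $R^{1/2}$ whatever $u_0$ is. In that range $\min(\sigma,1/3)\log X=\frac13u\log R$. For $u$ near $R^{1/2}$ this is only about $\frac23u\log u$. The resulting deficit, of order $u\log u$, cannot be absorbed into $O(\log_2(3R)+u/\log u)$.

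So \eqref{eq:weighted-tail-general} is not proved in the upper range of $u$. Your deduction of \eqref{eq:weighted-tail} from it breaks there as well. (\eqref{eq:weighted-tail} itself would survive the cap if argued directly, since $\frac12<\frac23$.)

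\textbf{The cap is unnecessary.} Since $u\le R^{1/2}$, your $\sigma$ satisfies
\[
\sigma\le\frac12+\frac{\log(\frac12\log R)}{\log R}\le\frac12+\frac1{2e}<0.7 .
\]
Hence $\ell^{\sigma-1}\le 2^{-0.3}<0.82$ for every prime $\ell$. This gives $-\log(1-\ell^{\sigma-1})\le C\ell^{\sigma-1}$ with an absolute $C$. Equivalently, the quadratic terms are dominated by the linear sum itself, because $\ell^{2(\sigma-1)}\le\ell^{\sigma-1}$.

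The prime sum is then $\ll\log_2(3R)+R^\sigma/(\sigma\log R)\le\log_2(3R)+u$, by Chebyshev and partial summation exactly as in your display. You also do not need the sharp li-type asymptotic, which would require the prime number theorem rather than Chebyshev. Any constant multiple of $u$ is absorbed by the extra $u\log_2u$ in $\sigma\log X=u\log u+u\log_2u$, once $\log_2u_0$ exceeds that constant. The repaired argument therefore gives $-u\log u+O(\log_2(3R))$, slightly stronger than required.

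\textbf{How the paper avoids this.} The paper takes $\delta=\log u/\log R$, which lies in $(0,\frac12]$ automatically. Then $s\ge\frac12$ gives $-\log(1-\ell^{-s})\ll\ell^{-1+\delta}$ directly. The main term is exactly $\delta\log X=u\log u$, and the Euler-product loss $R^\delta/(\delta\log R)=u/\log u$ is precisely the stated error, with no absorption step needed.
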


\begin{proof}
Let
\[
 \delta=\frac{\log u}{\log R},\qquad s=1-\delta.
\]
The upper bound $u\le R^{1/2}$ gives $0<\delta\le1/2$, so
$1/2\le s<1$.  Rankin's method yields
\begin{align*}
\Psi_\tau(X,R)
&\le X^s\sum_{\substack{m\ge1\\P^+(m)\le R}}\frac{\tau(m)}{m^s}\\
&=X^s\prod_{\ell\le R}(1-\ell^{-s})^{-2}.
\end{align*}
Consequently
\begin{equation}\label{eq:weighted-rankin-log}
 \log\frac{\Psi_\tau(X,R)}{X}
 \le -\delta\log X+2\sum_{\ell\le R}-\log(1-\ell^{-s}).
\end{equation}
Since $s\ge1/2$,
\[
 \sum_{\ell\le R}-\log(1-\ell^{-s})
 \ll \sum_{\ell\le R}\ell^{-1+\delta}.
\]
Chebyshev's bound and partial summation give
\begin{align*}
 \sum_{\ell\le R}\ell^{-1+\delta}
 &\ll 1+\int_2^R\frac{t^{-1+\delta}}{\log t}\,dt.
\end{align*}
Split the integral at $T=\min(R,e^{1/\delta})$.  On $[2,T]$ it is
$O(\log_2(3R))$.  On $[T,R]$, when this interval is nonempty,
integration by parts (or the substitution $v=\delta\log t$) gives
\[
 \int_T^R\frac{t^{-1+\delta}}{\log t}\,dt
 \ll \frac{R^\delta}{\delta\log R}
 =\frac{u}{\log u}.
\]
Thus
\[
 \sum_{\ell\le R}\ell^{-1+\delta}
 \ll \log_2(3R)+\frac{u}{\log u}.
\]
Since $\delta\log X=u\log u$, substitution in
\eqref{eq:weighted-rankin-log} proves
\eqref{eq:weighted-tail-general}.  If $u\ge u_0$ and the second
condition in \eqref{eq:weighted-range} holds with a sufficiently large
absolute $C_0$, the error term is at most
$\frac12u\log u$, which gives \eqref{eq:weighted-tail}.
\end{proof}

The quantified meaning of the uniform $o(1)$ of
Banks--Friedlander--Pomerance--Shparlinski \cite[Theorem~1.1]{BFPS} is the
following.  For every fixed $\delta>0$ and every $\xi>0$ there are numbers
$X_0=X_0(\delta,\xi)$ and $U_0=U_0(\delta,\xi)$ such that, whenever
\[
 X\ge X_0,\qquad Y\ge(\log_2 X)^{1+\delta},\qquad
 u=\frac{\log X}{\log Y}\ge U_0,
\]
one has
\begin{equation}\label{eq:bfps-quantified}
 \Phi(X,Y)
 \le X\exp\{-u(\log_2 u+\log_3 u-\xi)\}.
\end{equation}
The Banks--Friedlander--Pomerance--Shparlinski estimate is used below only
through Lemmas~\ref{lem:bfps-uniform} and~\ref{lem:bfps-saddle}, the first
deduced from \eqref{eq:bfps-quantified} and the second from its introductory
form~\eqref{eq:bfps-intro}.  Both lemmas are also proved directly and
unconditionally in the accompanying formalization~\cite{LeanFormalization}, so
a reader who takes them from there obtains every result of this paper with no
external analytic input.  The elementary quotations of Brun--Titchmarsh,
Mertens' theorem and the Selberg upper-bound sieve remain in force.

For $t\ge3$ let $\mathcal B_\delta(t)$ be the set of pairs $(X,Y)$
satisfying
\[
 X\ge t,\qquad Y\ge(\log_2 X)^{1+\delta},\qquad
 u=\frac{\log X}{\log Y}\ge t.
\]
Define the error envelope
\begin{equation}\label{eq:bfps-envelope}
 \mathfrak b_\delta(t)=
 \sup_{(X,Y)\in\mathcal B_\delta(t)}
 \left(\frac1u\log\frac{\Phi(X,Y)}X
       +\log_2 u+\log_3 u\right)_+,
\end{equation}
with the supremum of the empty set interpreted as zero.
Estimate \eqref{eq:bfps-quantified} gives
\begin{equation}\label{eq:bfps-envelope-zero}
 \mathfrak b_\delta(t)\longrightarrow0.
\end{equation}

\begin{lemma}[Uniform BFPS consequences]\label{lem:bfps-uniform}
Fix the BFPS range parameter $\delta=1$.
\begin{enumerate}[label=\textup{(\roman*)},leftmargin=2em]
\item Fix $0<c<C<\infty$, put
$\delta_x=(\log_2 x)^{-1/2}$, and let
\[
 x^{1-\delta_x}\le W\le2x,
 \qquad cG\le T\le CG.
\]
There is a function $\epsilon_{c,C}(x)\to0$ such that, uniformly over
this rectangle,
\begin{equation}\label{eq:bfps-fixed-uniform}
 \Phi(W,e^T)
 \le W\exp\left\{-\frac{G^2}{T}
                 +\epsilon_{c,C}(x)V\right\}.
\end{equation}
\item Put
\[
 \begin{aligned}
 L_2&=\log_2 x, & \mathcal R&=\sqrt{LL_2},
 & \eta&=\sA^{-1/2},\\
 E&=(\tfrac12-\eta)\mathcal R,
 & T_{\rm mov}&=\frac{L\sA}{2E}.
 \end{aligned}
\]
There is a function $\epsilon_{\rm BFPS}(x)\to0$ such that, uniformly for
$x^{1/2}\le W\le2x$,
\begin{equation}\label{eq:bfps-moving-uniform}
 \Phi(W,e^{T_{\rm mov}})
 \le W\exp\{-E+\epsilon_{\rm BFPS}(x)\eta\mathcal R\}.
\end{equation}
\end{enumerate}
\end{lemma}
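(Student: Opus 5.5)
The plan is to feed each rectangle into the quantified BFPS bound \eqref{eq:bfps-quantified}, in the envelope form \eqref{eq:bfps-envelope}--\eqref{eq:bfps-envelope-zero}, with $X=W$, $Y=e^T$ and $u=\log W/T$, and then check that $u(\log_2u+\log_3u)$ reproduces the target exponent up to an admissible error. The envelope is useful because it gives uniformity for free. If every $(W,e^T)$ in the rectangle lies in $\mathcal B_1(t(x))$ for some $t(x)\to\infty$, then
\[
\log\frac{\Phi(W,e^T)}{W}\le -u(\log_2u+\log_3u)+u\,\mathfrak b_1(t(x)),
\]
and $\mathfrak b_1(t(x))\to0$. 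Membership in $\mathcal B_1(t)$ is immediate in both parts. We have $W\ge x^{1/2}\to\infty$. Also $e^T\ge e^{cG}$ far exceeds $(\log_2 W)^2$. Finally $u\gg L/G\asymp V\to\infty$ in (i), and $u\gg L/T_{\rm mov}\asymp\sqrt{LL_2}/\sA\to\infty$ in (ii). So take $t(x)$ to be the smaller of these lower bounds.

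For (i), write $\log W=L(1+O(\delta_x))$; the upper end $W\le2x$ only adds $\log 2$, which is harmless. Then
\[
\log u=\log L-\log T+O(\delta_x)=\tfrac12\log L-\tfrac12\log\sA+O(1),
\]
using $T\asymp G=(L\sA)^{1/2}$. Hence
\[
\log_2u=\log_3x-\log2+O\!\left(\frac{\log_4x}{\log_2x}\right),\qquad
\log_3u=\log_4x+o(1),
\]
so that $\log_2u+\log_3u=\sA+o(1)$, uniformly in the rectangle. Therefore
\[
u(\log_2u+\log_3u)=\frac{L\sA}{T}\bigl(1+O(\delta_x)\bigr)+o(u)=\frac{G^2}{T}+O\!\left(\frac{\delta_xG^2}{T}\right)+o(V).
\]
Since $G^2/T\asymp G=\sA V$ and $\delta_x\sA\asymp\log_3x/\sqrt{\log_2x}\to0$, the $O(\delta_x G^2/T)$ term is $o(V)$. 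Collecting the $o(1)$ losses, namely the $\log_2u$ expansion error, the envelope term $\mathfrak b_1(t(x))$, and $\delta_x\sA$, into $\epsilon_{c,C}(x)$ gives \eqref{eq:bfps-fixed-uniform}. All of these depend only on $c,C$ and $x$.

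For (ii), the same computation applies with $T=T_{\rm mov}\asymp\sA\sqrt{L/L_2}$. Here $\log u=\tfrac12L_2+O(\log_3x)$, because $\log W\in[L/2,\,L+\log2]$ only shifts $\log u$ by $O(1)$. Hence again $\log_2u+\log_3u=\sA+o(1)$. This gives
\[
u(\log_2u+\log_3u)=\frac{\log W}{L}\cdot\frac{L\sA}{T_{\rm mov}}+o(u)=\frac{2\log W}{L}\,E+o(u)\ge E+o(u),
\]
since $\log W\ge L/2$. The remaining task is to compare the error with $\eta\mathcal R$. We have $u\asymp E/\sA\asymp\mathcal R/\sA$, and $\mathcal R/\sA=\sA^{-1/2}\cdot\eta\mathcal R=o(\eta\mathcal R)$. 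So even an $O(u)$ error is acceptable, which yields \eqref{eq:bfps-moving-uniform}.

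The main obstacle is the bookkeeping needed to make the $o(1)$ in $\log_2u+\log_3u=\sA+o(1)$ genuinely uniform over the whole rectangle. The point where care is required is part (i): there the error must be measured against $V$, which is only a factor $\sA$ smaller than the main term $G^2/T$. So one must verify that the $O(\delta_x)$ perturbation of $\log W$, multiplied by $\sA$, still tends to zero. The choice $\delta_x=(\log_2x)^{-1/2}$ is exactly what makes this work.
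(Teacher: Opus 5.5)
Your proposal is correct and follows essentially the same route as the paper. Both arguments use the envelope bound with $\mathfrak b_1(t(x))\to0$ and the uniform expansion $\log_2u+\log_3u=\sA+o(1)$; in (i) the $\log W$ versus $L$ replacement is absorbed through $\delta_x\sA\to0$, and in (ii) one uses $2E\log W/L\ge E$ together with $u\ll\mathcal R/\sA=o(\eta\mathcal R)$. The only slip is that $e^{T}\ge e^{cG}$ is not available in (ii), since $T_{\rm mov}/G\asymp\sqrt{\sA/L_2}\to0$, but the needed inequality $e^{T_{\rm mov}}\ge(\log_2W)^2$ still holds because $T_{\rm mov}\asymp\sA\sqrt{L/L_2}\gg\log_3x$.
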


\begin{proof}
We verify the BFPS hypotheses at both endpoints before extracting the
errors.  In part~\textup{(i)},
\[
 u=\frac{\log W}{T}
 \ge\frac{(1-\delta_x)L}{CG}\asymp_{C}V\longrightarrow\infty.
\]
Moreover $T\ge cG\gg\log_3 x$, while
$\log\{(\log_2 W)^2\}=O(\log_3 x)$; hence
$e^T\ge(\log_2 W)^2$ uniformly for large $x$.  In part~\textup{(ii)},
\[
 u\ge\frac{L}{2T_{\rm mov}}=\frac{E}{\sA}\longrightarrow\infty,
 \qquad
 T_{\rm mov}\asymp\sA\sqrt{\frac{L}{L_2}}\gg\log_3 x,
\]
so again $e^{T_{\rm mov}}\ge(\log_2 W)^2$ throughout the displayed
interval.

For part~\textup{(i)}, set
\[
 t_{c,C}(x)=
 \min\left\{x^{1-\delta_x},
       \frac{(1-\delta_x)L}{CG}\right\}.
\]
Then $t_{c,C}(x)\to\infty$, and
\eqref{eq:bfps-envelope}--\eqref{eq:bfps-envelope-zero} show that the
BFPS remainder is at most $u\mathfrak b_1(t_{c,C}(x))$ uniformly.  Define
\[
 r_{c,C}(x)=
 \sup_{\substack{x^{1-\delta_x}\le W\le2x\\cG\le T\le CG}}
 \frac{\bigl(G^2/T-u(\log_2 u+\log_3 u)\bigr)_+}{V}.
\]
Uniformly in this rectangle,
\[
 \log_2 u=\log_3 x-\log 2+o(1),\qquad
 \log_3 u=\log_4 x+o(1),
\]
and replacing $\log W$ by $L$ costs
$O(\delta_xG)=o(V)$ because $\delta_x\sA\to0$.  Hence
$r_{c,C}(x)\to0$.  Since $u\ll_{c,C}V$, the choice
\[
 \epsilon_{c,C}(x)=r_{c,C}(x)
   +C_{c,C}\mathfrak b_1(t_{c,C}(x))
\]
proves \eqref{eq:bfps-fixed-uniform}.

For part~\textup{(ii)}, set
\[
 t_{\rm mov}(x)=\min\left\{x^{1/2},\frac{E}{\sA}\right\}
 \longrightarrow\infty
\]
and
\[
 r_{\rm mov}(x)=
 \sup_{x^{1/2}\le W\le2x}
 \frac{\bigl(E-u(\log_2 u+\log_3 u)\bigr)_+}
      {\eta\mathcal R},
 \qquad u=\frac{\log W}{T_{\rm mov}}.
\]
Here
$\log_2 u+\log_3 u=\sA+o(1)$ uniformly, and the minimum
$u=E/\sA$ occurs at $W=x^{1/2}$.  Therefore
$r_{\rm mov}(x)\to0$.  Also $u\ll\mathcal R/\sA$, so the BFPS remainder
contributes at most
\[
 C\frac{\mathfrak b_1(t_{\rm mov}(x))}{\sqrt{\sA}}
       \eta\mathcal R.
\]
Thus
\[
 \epsilon_{\rm BFPS}(x)=r_{\rm mov}(x)
 +C\frac{\mathfrak b_1(t_{\rm mov}(x))}{\sqrt{\sA}}
 \longrightarrow0,
\]
which proves \eqref{eq:bfps-moving-uniform}.
\end{proof}

\begin{lemma}[BFPS at the present saddle]\label{lem:bfps-saddle}
Let $C>0$ be fixed and let $G,V$ be as in Lemma \ref{lem:scales}.  Then
\begin{equation}\label{eq:bfps-saddle}
 \Phi(Cx,e^G)\ll_C x\exp\{-G+o(V)\}.
\end{equation}
\end{lemma}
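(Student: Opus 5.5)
We apply the BFPS bound \eqref{eq:bfps-intro} directly with $X=Cx$ and $Y=e^G$. Equivalently, this is part~(i) of Lemma~\ref{lem:bfps-uniform} with $T=G$ (so $c=C'=1$) and $W=Cx$, once $W$ is placed inside the admissible rectangle. The argument has two steps: verify the BFPS hypotheses, then evaluate the exponent at the saddle $u=\log(Cx)/G$.

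\emph{Hypotheses.} Put $X=Cx$. Then
\[
 u=\frac{\log(Cx)}{G}=\frac{L+O_C(1)}{G}=V+O_C(1/G).
\]
By Lemma~\ref{lem:scales}, $u\to\infty$. Also $Y=e^G\ge(\log_2 X)^{2}$ for large $x$, because $G\gg\log_3 x$ while $\log\{(\log_2 X)^2\}\ll\log_3 x$. So \eqref{eq:bfps-intro} applies with $\delta=1$ and gives
\[
 \Phi(Cx,e^G)\le Cx\exp\{-u(\log_2 u+\log_3 u+o(1))\}.
\]

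\emph{Exponent.} Since $u=V+O_C(1/G)$ and $V\sim(L/\log_3x)^{1/2}$,
\[
 \log u=\tfrac12\log_2x-\tfrac12\log_4x+o(1),
\]
\[
 \log_2u=\log_3x-\log2+o(1),\qquad \log_3u=\log_4x+o(1).
\]
Hence $\log_2u+\log_3u=\sA+o(1)$. It follows that
\[
 u(\log_2u+\log_3u+o(1))=V\sA+o(V)=G+o(V),
\]
using $V\sA=L\sA/G=G^2/G=G$ and $u-V=o(1)$. Absorbing the factor $C$ into the implied constant proves \eqref{eq:bfps-saddle}.

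\emph{Main obstacle.} The only delicate point is the asymptotic expansion of $\log_2u$ and $\log_3u$ to within $o(1)$. This matters because the resulting error is multiplied by $u\asymp V$ and must stay $o(V)$. Two facts make this work. First, the $-\tfrac12\log_4x$ term inside $\log u$ contributes only $o(1)$ to $\log_2 u$, since it is divided by $\tfrac12\log_2 x$. Second, the same computation confirms that the constant $-\log2$ in $\sA$ is exactly the correct normalisation. Alternatively, one can simply quote Lemma~\ref{lem:bfps-uniform}(i) with $c=1/2$, $C'=2$, $T=G$ and $W=Cx$. This works for $C\le2$; for larger $C$, one applies it at $x'=Cx$ and notes that $G(x')=G(x)+o(V)$ and $V(x')\sim V(x)$.
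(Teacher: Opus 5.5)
Your proposal is correct and is essentially the paper's proof. Like the paper, you apply \eqref{eq:bfps-intro} at $X=Cx$, $Y=e^G$, use $u=V+O(1/G)$, expand $\log_2u=\log_3x-\log2+o(1)$ and $\log_3u=\log_4x+o(1)$, and conclude $u(\log_2u+\log_3u+o(1))=G+o(V)$. Your explicit check that $e^G\ge(\log_2 X)^2$ is a small addition; the paper leaves it implicit here and carries it out in Lemma~\ref{lem:bfps-uniform}.
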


\begin{proof}
Apply \eqref{eq:bfps-intro} with $X=Cx$ and $Y=e^G$.  Since $C$ is fixed,
\[
 u=\frac{\log X}{\log Y}=V+O(1/G).
\]
Moreover
\[
 \log u=\frac12(\log L-\log \sA)+o(1),
\]
and hence
\begin{equation}\label{eq:logs-u}
 \log_2 u=\log_3 x-\log 2+o(1),
 \qquad
 \log_3 u=\log_4 x+o(1).
\end{equation}
Thus
\[
 u(\log_2 u+\log_3 u+o(1))=V(\sA+o(1))=G+o(V),
\]
and \eqref{eq:bfps-saddle} follows.
\end{proof}

\begin{lemma}[Equal Euler factors determine prime support]\label{lem:support}
If $a,b\ge1$ and
\[
 \frac{\varphi(a)}a=\frac{\varphi(b)}b,
\]
then $a$ and $b$ have the same prime divisors.
\end{lemma}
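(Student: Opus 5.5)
We use the formula $\varphi(a)/a=\prod_{p\mid a}(1-1/p)$ and write it as a reduced fraction. For a finite set $S$ of primes put
\[
 f(S)=\prod_{p\in S}\frac{p-1}{p}=\frac{\prod_{p\in S}(p-1)}{\prod_{p\in S}p}.
\]
The hypothesis says $f(S_a)=f(S_b)$, where $S_a,S_b$ are the prime supports of $a$ and $b$. We must show $S_a=S_b$. First we cancel the common primes: $f(S_a\cap S_b)$ is a nonzero factor of both sides, so we may assume $S_a$ and $S_b$ are disjoint. It then suffices to show that $f(S)=f(S')$ with $S\cap S'=\emptyset$ forces $S=S'=\emptyset$.

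The key step is to look at the largest prime. Suppose $S\cup S'$ is nonempty, and let $q$ be its largest element; without loss of generality $q\in S$. Clearing denominators, the equality becomes
\[
 \prod_{p\in S}(p-1)\prod_{p'\in S'}p'=\prod_{p'\in S'}(p'-1)\prod_{p\in S}p .
\]
The prime $q$ divides the right-hand side. We check that it cannot divide the left-hand side:
\begin{itemize}
 \item every factor $p-1$ with $p\in S$ satisfies $1\le p-1<q$, so $q\nmid p-1$;
 \item every $p'\in S'$ is a prime different from $q$, since the sets are disjoint.
\end{itemize}
This contradicts unique factorisation. Hence $S\cup S'=\emptyset$, and therefore $S_a=S_b$. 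The factor $p-1$ may equal $1$ (when $p=2$), but it is still coprime to $q$, so this case is harmless.

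As an alternative to the cancellation step, we could induct downward on the largest prime in $S_a\cup S_b$. Let $q$ be that prime and suppose $q\in S_a\setminus S_b$. In lowest terms, the denominator of $\varphi(a)/a$ is then divisible by $q$, because no numerator factor $p-1<q$ can absorb it, while the denominator of $\varphi(b)/b$ is not. This is the same argument phrased through $q$-adic valuations:
\[
 v_q\!\left(\frac{\varphi(a)}{a}\right)=-1, \qquad v_q\!\left(\frac{\varphi(b)}{b}\right)=0 .
\]

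There is no real obstacle here. The only point needing care is that the maximal prime $q$ appears to the first power in a single denominator and cannot be cancelled by any $p-1$. That holds because every such $p-1$ is strictly less than $q$.
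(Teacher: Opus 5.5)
Your proof is correct and follows essentially the same route as the paper: cancel the common primes, clear denominators, and observe that the largest remaining prime divides one side of the cross-multiplied identity but none of the factors on the other side. Your $q$-adic valuation variant is the same argument and simply avoids the cancellation step.
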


\begin{proof}
Cancel the Euler factors belonging to primes common to the two supports.
Let $A$ and $B$ be the remaining disjoint prime sets.  Their equality is
\[
 \prod_{p\in A}\frac{p-1}{p}
 =\prod_{q\in B}\frac{q-1}{q},
\]
or, after cross multiplication,
\begin{equation}\label{eq:support-cross-multiplication}
 \prod_{p\in A}(p-1)\prod_{q\in B}q
 =\prod_{q\in B}(q-1)\prod_{p\in A}p.
\end{equation}
Suppose $A\cup B$ is nonempty, let $P$ be its largest prime, and assume
without loss of generality that $P\in A$.  The right side of
\eqref{eq:support-cross-multiplication} is divisible by $P$.  No factor on
the left is divisible by $P$: the primes in $B$ are smaller than $P$, and
every factor $p-1$ with $p\in A$ is also smaller than $P$.  This is a
contradiction.  Thus $A=B=\varnothing$.
\end{proof}

\section{Extracting the largest prime divisor of the common totient}

This section gives a self-contained version of the part of Yamada's extraction needed for Euler's function.

\begin{lemma}[Provider lemma]\label{lem:provider}
Let $N\ge1$ and let $p$ be a prime divisor of $\varphi(N)$.  If $p^2\nmid N$, then there is a prime $q\mid N$ such that $q\equiv1\pmod p$.
\end{lemma}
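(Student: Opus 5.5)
The plan is to expand $\varphi(N)$ through its multiplicative formula and use that $p$ is prime. Write the prime factorisation $N=\prod_{q\mid N}q^{a_q}$, so that
\[
 \varphi(N)=\prod_{q\mid N}q^{a_q-1}(q-1).
\]
Since $p$ is prime and $p\mid\varphi(N)$, the prime $p$ divides one of the factors: either $p\mid q^{a_q-1}$ for some prime $q\mid N$, or $p\mid q-1$ for some prime $q\mid N$.

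In the first case, $p\mid q^{a_q-1}$ forces $q=p$ and $a_p\ge2$. Then $p^2\mid N$, which the hypothesis rules out. Hence the second case holds: some prime $q\mid N$ satisfies $p\mid q-1$, that is, $q\equiv1\pmod p$, as claimed. Note that $q\ne p$ automatically, since $p\nmid p-1$.

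There is no real obstacle here. The only point needing care is to record that the case $N=1$ cannot arise, because $\varphi(1)=1$ has no prime divisor. With that noted, the argument is a single application of the primality of $p$ (Euclid's lemma) to the finite product above.
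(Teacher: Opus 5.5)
Your proposal is correct and follows the paper's proof: expand $\varphi(N)=\prod_{q^{a}\Vert N}q^{a-1}(q-1)$, apply Euclid's lemma, and rule out $p\mid q^{a-1}$ because it would force $q=p$ with $a\ge2$, so $p\mid q-1$ for some prime $q\mid N$. Your remark that $N=1$ is vacuous is harmless but not needed.
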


\begin{proof}
Write $N=\prod r^{a_r}$.  Since
\[
 \varphi(N)=\prod_{r^{a_r}\Vert N}r^{a_r-1}(r-1),
\]
a factor $p$ in $\varphi(N)$ either arises from $r^{a_r-1}$ or from $r-1$.  In the first case $r=p$ and $a_r\ge2$, so $p^2\mid N$.  Under the hypothesis $p^2\nmid N$, the factor $p$ must therefore divide $r-1$ for some prime divisor $r=q$ of $N$.
\end{proof}

\begin{lemma}[Large-prime skeleton]\label{lem:skeleton}
Let $h\ge1$, and let $n\le x$ satisfy
\[
 \varphi(n)=\varphi(n+h)=M.
\]
Let $p=P^+(M)$, and assume that
\begin{equation}\label{eq:skeleton-hyp}
 p>h,
 \qquad
 p^2\nmid n(n+h).
\end{equation}
Then there exist primes
\[
 q_1=k_1p+1\mid n,
 \qquad
 q_2=k_2p+1\mid n+h
\]
with $k_1,k_2\ge1$ such that
\[
 n=m_1q_1,
 \qquad
 n+h=m_2q_2,
 \qquad
 q_i\nmid m_i,
\]
\begin{equation}\label{eq:kphi}
 k_1\varphi(m_1)=k_2\varphi(m_2),
\end{equation}
and
\begin{equation}\label{eq:smooth-multipliers}
 P^+(k_1k_2)\le p.
\end{equation}
Moreover $q_1\ne q_2$.
\end{lemma}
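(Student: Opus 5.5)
Apply Lemma~\ref{lem:provider} twice, once to $N=n$ and once to $N=n+h$. Since $p=P^+(M)$ divides $M=\varphi(n)=\varphi(n+h)$, and by \eqref{eq:skeleton-hyp} $p^2\nmid n$ and $p^2\nmid n+h$, the lemma gives primes $q_1\mid n$ and $q_2\mid n+h$ with $q_i\equiv1\pmod p$. Write $q_i=k_ip+1$ with $k_i\ge1$.

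First show that $q_i$ divides $n$ (resp.\ $n+h$) exactly once. If $q_1^2\mid n$, then $q_1\mid\varphi(n)=M$. But $q_1>p$, so this contradicts $p=P^+(M)$. Hence $n=m_1q_1$ with $q_1\nmid m_1$, and likewise $n+h=m_2q_2$ with $q_2\nmid m_2$. Multiplicativity then gives $\varphi(n)=\varphi(m_1)(q_1-1)=k_1p\,\varphi(m_1)$ and $\varphi(n+h)=k_2p\,\varphi(m_2)$. Equating and cancelling $p$ yields \eqref{eq:kphi}.

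Next, \eqref{eq:smooth-multipliers} holds because $k_ip=q_i-1$ divides $\varphi(n)$ or $\varphi(n+h)$, that is, divides $M$. Every prime factor of $k_i$ therefore divides $M$ and is at most $P^+(M)=p$.

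Finally, suppose $q_1=q_2=q$. Then $q$ divides both $n$ and $n+h$, hence divides $h$, so $q\le h$. But $q=k_1p+1>p>h$, a contradiction. The only point needing care is the exact-divisibility step, which uses that $q_i$ exceeds the largest prime of $M$; the argument is otherwise direct, and I do not expect a genuine obstacle.
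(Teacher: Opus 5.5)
Your proposal is correct and follows the paper's proof essentially step for step. You apply the provider lemma to $n$ and to $n+h$, show exact divisibility because $q_i>p=P^+(M)$, derive \eqref{eq:kphi} from multiplicativity, get \eqref{eq:smooth-multipliers} from $k_ip=q_i-1\mid M$, and rule out $q_1=q_2$ via $q_1\mid h$; the only difference is that the paper handles $q_1\ne q_2$ first rather than last.
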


\begin{proof}
Since $p\mid M=\varphi(n)$ and $p^2\nmid n$, Lemma \ref{lem:provider} gives a prime $q_1\mid n$ with $q_1\equiv1\pmod p$.  Similarly, $p\mid\varphi(n+h)$ and $p^2\nmid n+h$ give a prime $q_2\mid n+h$ with $q_2\equiv1\pmod p$.  Write
\[
 q_i=k_ip+1.
\]
If $q_1=q_2$, then $q_1\mid h$, but $q_1>p>h$, impossible.  Thus $q_1\ne q_2$.

Each $q_i$ divides the corresponding integer only once.  If, for instance, $q_1^2\mid n$, then $q_1\mid\varphi(n)=M$, contradicting $q_1>p=P^+(M)$.  Hence $n=m_1q_1$ with $q_1\nmid m_1$, and similarly $n+h=m_2q_2$ with $q_2\nmid m_2$.

Multiplicativity of $\varphi$ now gives
\[
 M=\varphi(m_1)(q_1-1)=\varphi(m_1)k_1p
  =\varphi(m_2)k_2p,
\]
which proves \eqref{eq:kphi}.  Finally $q_i-1=k_ip$ divides $M$; since $p$ is the largest prime divisor of $M$, every prime divisor of $k_i$ is at most $p$.  This proves \eqref{eq:smooth-multipliers}.
\end{proof}

\begin{lemma}[Small-cofactor uniqueness for odd shifts]\label{lem:small-injection}
Let $h$ be odd.  For each fixed pair $(m_1,m_2)$, there is at most one integer $n$ arising from a skeleton of Lemma \ref{lem:skeleton}.
\end{lemma}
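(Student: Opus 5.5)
The plan is to show that, once $(m_1,m_2)$ is fixed, the skeleton relations of Lemma~\ref{lem:skeleton} pin down the single quantity $pk_1$, hence $n=m_1(k_1p+1)$. The unknowns are $p,k_1,k_2$, and I would eliminate them with the two relations
\[
 m_2(k_2p+1)-m_1(k_1p+1)=h,\qquad k_1\varphi(m_1)=k_2\varphi(m_2).
\]
The first comes from subtracting $n=m_1q_1$ from $n+h=m_2q_2$; the second is \eqref{eq:kphi}.

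First I would parametrize the multipliers. Put $g=(\varphi(m_1),\varphi(m_2))$, $a=\varphi(m_2)/g$ and $b=\varphi(m_1)/g$. Then \eqref{eq:kphi} forces $(k_1,k_2)=t(a,b)$ for some positive integer $t$. Writing $s=pt$, the first relation becomes
\[
 s\,(m_2b-m_1a)=h+m_1-m_2,
\]
and we have $n=m_1(as+1)$. Here $a,b$ depend only on $(m_1,m_2)$, so $n$ is determined by $s$. If $c:=m_2b-m_1a\neq0$, then $s=(h+m_1-m_2)/c$ is unique, and so is $n$.

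The main point is the degenerate case $c=0$. This case is exactly the condition $m_2\varphi(m_1)=m_1\varphi(m_2)$, that is $\varphi(m_1)/m_1=\varphi(m_2)/m_2$. By Lemma~\ref{lem:support}, $m_1$ and $m_2$ then have the same prime support, and in particular the same parity.

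Oddness of $h$ now gives a contradiction. Since $p>h\ge1$, we have $p\ge2$, so each $q_i=k_ip+1>p\ge2$ is an odd prime. Hence $n=m_1q_1$ has the parity of $m_1$, and $n+h=m_2q_2$ has the parity of $m_2$. So $n$ and $n+h$ would have the same parity, which is impossible for odd $h$. Thus the case $c=0$ never arises from a skeleton, and the lemma follows.

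The only delicate step is this degenerate case, where the linear equation in $s$ carries no information; it is handled entirely by Lemma~\ref{lem:support} together with the parity argument. This is also exactly where the GHP same-support family lives when $h$ is even.
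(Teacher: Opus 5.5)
Your proposal is correct and follows the paper's proof essentially step for step: the same parametrization $k_1=ac$, $k_2=bc$, the same linear relation $cp(m_2b-m_1a)=h+m_1-m_2$ determining $n$ when the coefficient is nonzero, and the same appeal to Lemma~\ref{lem:support} in the degenerate case. The only cosmetic difference is the final parity contradiction. The paper uses $m_2=m_1+h$, which the degenerate equation forces, to see that $m_1,m_2$ have opposite parity. You instead use that the provider primes $q_i\ge3$ are odd, so that $n$ and $n+h$ would inherit the common parity of $m_1,m_2$; both routes are valid.
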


\begin{proof}
Let
\[
 g=(\varphi(m_1),\varphi(m_2)),
 \qquad
 a=\frac{\varphi(m_2)}g,
 \qquad
 b=\frac{\varphi(m_1)}g.
\]
Then $(a,b)=1$, and \eqref{eq:kphi} is equivalent to
\[
 k_1=ac,
 \qquad
 k_2=bc
\]
for a positive integer $c$.  The equation $n+h=m_2q_2$ and $n=m_1q_1$ gives
\begin{align}
 h
 &=m_2(bcp+1)-m_1(acp+1)\notag\\
 &=cp(m_2b-m_1a)+m_2-m_1.
\end{align}
Thus
\begin{equation}\label{eq:small-cpd}
 cp(m_2b-m_1a)=h+m_1-m_2.
\end{equation}

If $m_2b=m_1a$, then \eqref{eq:small-cpd} gives $m_2=m_1+h$.  Also $m_2b=m_1a$ gives
\[
 \frac{\varphi(m_1)}{m_1}=\frac{\varphi(m_2)}{m_2}.
\]
By Lemma \ref{lem:support}, $m_1$ and $m_2$ have the same prime divisors.  This is impossible because $h$ is odd, so $m_1$ and $m_2=m_1+h$ have opposite parity.  Therefore
\[
 m_2b-m_1a\ne0.
\]

Consequently \eqref{eq:small-cpd} determines the product $C=cp$ uniquely from $m_1,m_2$.  Then
\[
 q_1=aC+1,
 \qquad
 q_2=bC+1,
 \qquad
 n=m_1q_1
\]
are uniquely determined.  Thus at most one $n$ can arise from the fixed pair $(m_1,m_2)$.
\end{proof}

\begin{corollary}[Small cofactor range]\label{cor:small}
Let $h$ be odd and let $2\le y\le x$.  The number of skeleton solutions of Lemma \ref{lem:skeleton} with
\[
 m_1m_2\le x/y
\]
is
\begin{equation}\label{eq:small-bound}
 \ll \frac{x\log x}{y}.
\end{equation}
\end{corollary}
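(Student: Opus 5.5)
By Lemma~\ref{lem:small-injection}, each skeleton solution $n$ is determined by its pair of cofactors $(m_1,m_2)$. So the number of skeleton solutions with $m_1m_2\le x/y$ is at most the number of ordered pairs of positive integers $(m_1,m_2)$ with $m_1m_2\le x/y$ that actually arise from some skeleton. The argument therefore reduces to bounding this count of pairs. The trivial divisor-sum bound $\sum_{m\le x/y}\tau(m)\ll (x/y)\log(x/y)\le (x\log x)/y$ already gives \eqref{eq:small-bound}. Since $y\ge2$, we have $x/y\le x$ and $\log(x/y)\le\log x$, so the estimate holds with an absolute implied constant.

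If $x/y<1$ there are no pairs and nothing is to be proved. Otherwise, the concrete steps are as follows.

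First, I recall from Lemma~\ref{lem:small-injection} that the map $n\mapsto(m_1,m_2)$ is injective on the set of skeleton solutions. The one point to check is that for a given $n$ the skeleton data might not be unique, since $q_1$ and $q_2$ could be chosen in several ways. To handle this, I fix one choice of skeleton for each $n$ counted; this gives a well-defined map $n\mapsto(m_1,m_2)$. Injectivity then holds because the lemma shows that any skeleton with prescribed $(m_1,m_2)$ forces $n=m_1(aC+1)$, with $C$ determined by \eqref{eq:small-cpd}. Two distinct $n$ sharing the same pair would contradict this.

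Second, I count the pairs:
\[
\#\{(m_1,m_2):m_1m_2\le x/y\}=\sum_{m\le x/y}\tau(m)\le \frac{x}{y}\sum_{m_1\le x/y}\frac1{m_1}\ll \frac{x}{y}\bigl(1+\log x\bigr),
\]
which is $\ll x\log x/y$ for $x\ge3$. Small $x$ contributes only $O(1)$, which is absorbed into the implied constant.

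The only delicate point is the well-definedness of the map in the first step, namely that the multiplicity of possible skeletons for a single $n$ cannot inflate the count. Counting each $n$ once via a fixed choice of skeleton resolves this. No analytic input is needed beyond the harmonic sum.
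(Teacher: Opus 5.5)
Your proposal is correct and follows the paper's proof: injectivity of $n\mapsto(m_1,m_2)$ comes from Lemma~\ref{lem:small-injection}, and the pairs are then counted by $\sum_{m\le X}\lfloor X/m\rfloor\ll X\log(2X)$ with $X=x/y$. One small slip: your opening claim $\sum_{m\le X}\tau(m)\ll X\log X$ fails for $X$ close to $1$, but your later bound $X(1+\log x)$, together with your handling of small $x$, already covers that case.
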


\begin{proof}
By Lemma \ref{lem:small-injection}, each pair $(m_1,m_2)$ gives at most one solution.  The number of pairs with $m_1m_2\le X$ is
\[
 \sum_{m\le X}\left\lfloor\frac Xm\right\rfloor\ll X\log(2X).
\]
Taking $X=x/y$ proves \eqref{eq:small-bound}.
\end{proof}

\section{Proof of the nonsmooth transfer theorem}

\begin{proof}[Proof of Theorem \ref{thm:transfer}]
Let $n\le x$ be counted by $B_h(x;y)$ and put
\[
 M=\varphi(n)=\varphi(n+h),
 \qquad
 p=P^+(M)>y.
\]
Since $h\le y$, we have $p>h$.

\medskip\noindent\emph{Square sources.}
If $p^2\mid n(n+h)$, then, because $p>h$, the two congruences $n\equiv0\pmod{p^2}$ and $n\equiv-h\pmod{p^2}$ are distinct.  Thus for each $p$ there are at most $2x/p^2+O(1)$ such $n\le x$.  In this square-source case $p\le (x+h)^{1/2}\le (2x)^{1/2}$ for all large $x$.  Therefore the total square-source contribution is
\begin{equation}\label{eq:square-transfer}
 \ll x\sum_{p>y}\frac1{p^2}+x^{1/2}
 \ll \frac{x}{y}+x^{1/2}.
\end{equation}

\medskip\noindent\emph{Nonsquare sources and small cofactors.}
We may now assume $p^2\nmid n(n+h)$.  Lemma \ref{lem:skeleton} gives
\[
 n=m_1(k_1p+1),
 \qquad
 n+h=m_2(k_2p+1),
 \qquad
 P^+(k_1k_2)\le p.
\]
The contribution of skeletons with $m_1m_2\le x/y$ is, by Corollary \ref{cor:small},
\begin{equation}\label{eq:small-transfer}
 \ll \frac{x\log x}{y}.
\end{equation}

\medskip\noindent\emph{Large cofactors.}
It remains to consider skeletons with $m_1m_2>x/y$.  Since $h\le y\le x$,
\[
 q_1q_2=(k_1p+1)(k_2p+1)
 =\frac{n(n+h)}{m_1m_2}
 \le \frac{x(x+h)}{x/y}\le 2xy.
\]
For fixed $p,k_1,k_2$ corresponding to actual prime providers, the congruences
\[
 n\equiv0\pmod{q_1},
 \qquad
 n\equiv-h\pmod{q_2}
\]
have at most one residue class modulo $q_1q_2$, since $q_1,q_2$ are distinct primes.  Hence the number of such $n\le x$ is at most
\[
 \frac{x}{q_1q_2}+1.
\]
We now discard the primality of $q_1,q_2$ but retain the necessary condition $P^+(k_1k_2)\le p$ and the inequality $(k_1p+1)(k_2p+1)\le2xy$.  The large-cofactor contribution is at most $\Sigma_1+\Sigma_2$, where
\begin{equation}\label{eq:sigma1-transfer}
 \Sigma_1=x\sum_{p>y}
 \left(\sum_{\substack{k\ge1\\P^+(k)\le p}}\frac1{kp+1}\right)^2
\end{equation}
and
\begin{equation}\label{eq:sigma2-transfer}
 \Sigma_2=\sum_{p>y}\#\{(k_1,k_2):P^+(k_1k_2)\le p,
        (k_1p+1)(k_2p+1)\le2xy\}.
\end{equation}

By Lemma \ref{lem:mertens},
\[
 \sum_{\substack{k\ge1\\P^+(k)\le p}}\frac1{kp+1}
 \le \frac1p\sum_{\substack{k\ge1\\P^+(k)\le p}}\frac1k
 \ll \frac{\log p}{p}.
\]
Thus
\begin{equation}\label{eq:sigma1-bound-transfer}
 \Sigma_1\ll x\sum_{p>y}\frac{(\log p)^2}{p^2}
 \ll \frac{x(\log y)^2}{y}.
\end{equation}

For $\Sigma_2$, the inequality $(k_1p+1)(k_2p+1)\le2xy$ implies
\[
 k_1k_2\le \frac{2xy}{p^2}.
\]
The number of pairs $(k_1,k_2)$ with product $m$ is $\tau(m)$, and the condition $P^+(k_1k_2)\le p$ is $P^+(m)\le p$.  Hence
\begin{equation}\label{eq:sigma2-bound-transfer}
 \Sigma_2\le \sum_{y<p\le(2xy)^{1/2}}\Psi_\tau\left(\frac{2xy}{p^2},p\right).
\end{equation}
Combining \eqref{eq:square-transfer}, \eqref{eq:small-transfer}, \eqref{eq:sigma1-bound-transfer}, and \eqref{eq:sigma2-bound-transfer} proves \eqref{eq:transfer}.
\end{proof}

\section{Estimating the divisor-weighted transfer sum}

The nonsmooth transfer theorem is useful for a short interval of cutoffs around the saddle, not only at the final choice.  We record the uniform form because it makes the optimization transparent.

\begin{proposition}[Smooth-multiplier $B_4$ estimate, uniform near the saddle]\label{prop:b4-general}
Fix constants $0<c_0<C_0<\infty$.  Let
\[
 L=\log x,\qquad \sA=\log_3 x+\log_4 x-\log 2,
 \qquad G=(L\sA)^{1/2},\qquad V=L/G.
\]
Let $T$ satisfy
\begin{equation}\label{eq:T-window}
 c_0G\le T\le C_0G,
\end{equation}
and put $y=e^T$.  Then
\begin{equation}\label{eq:b4-general}
 \sum_{y<p\le(2xy)^{1/2}}\Psi_\tau\left(\frac{2xy}{p^2},p\right)
 \ll_{c_0,C_0} x\exp\{-T+o(V)\},
\end{equation}
where the sum is over primes $p$, and the $o(V)$ term is uniform for $T$ in \eqref{eq:T-window}.
\end{proposition}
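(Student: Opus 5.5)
We split the range of $p$ dyadically (or by a parameter $t$) and apply Lemma~\ref{lem:weighted-smooth} to each term. Write $p=e^{t}$ with $T<t\le \frac12(\log 2x+T)$, put $X_p=2xy/p^2$, and set
\[
 u_p=\frac{\log X_p}{\log p}=\frac{L+T+\log 2-2t}{t}.
\]
The sum then splits into two ranges. In the \emph{short} range the primes $p$ are so large that $u_p$ is bounded or small. In the \emph{main} range $T<t\le t_1$, where $t_1$ is chosen so that $u_p\ge V^{1/2}$, say.

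In the short range, use the trivial bound $\Psi_\tau(X,p)\le X\log(3X)$. This gives at most
\[
 \sum_{p>e^{t_1}}\frac{2xy}{p^2}\log x\ll \frac{xy\log x}{e^{t_1}}.
\]
Since $t_1\ge (L-o(L))/(V^{1/2}+2)$, and $L/V^{1/2}$ is far larger than $2T\asymp G$, this is much smaller than $xe^{-T}$. The reason is that $L/V^{1/2}=G\,V^{1/2}$, which beats any constant multiple of $G$.

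In the main range, verify the hypotheses of Lemma~\ref{lem:weighted-smooth}. Here $u_p\le L/T\ll V\le p^{1/2}$, because $p>e^{c_0G}$. Also $u_p\log u_p\gg V^{1/2}\log V$, which beats $C_0\log_2(3p)\ll\log L=o(V)$; this follows from Lemma~\ref{lem:scales}, provided we take the bound to be $u\ge V^{1/2}$ only. The general form \eqref{eq:weighted-tail-general} then gives
\[
 \Psi_\tau(X_p,p)\le X_p\exp\Bigl\{-u_p\log u_p+O\Bigl(\log L+\frac{u_p}{\log u_p}\Bigr)\Bigr\}.
\]
Since $u_p\ll V$, the error is $o(V)$ once we also know $\log u_p\to\infty$. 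Summing over $p$ with $p\sim e^t$ costs a factor of about $e^{-t}$ relative to $xy$, because $\sum_{p\sim e^t}p^{-2}\asymp e^{-t}/t$. The contribution of the block at $t$ is therefore
\[
 \ll x\exp\{T-t-u_p\log u_p+o(V)\}.
\]

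The main step, and the only place where the saddle enters, is to show uniformly in $t$ that
\[
 t+u_p\log u_p\ge 2T-o(V).
\]
The constraint comes from the endpoint $t=T$. There $u\approx L/T$ and $\log u\approx\log V=\frac12\log L-\frac12\log\sA$, so
\[
 u\log u\approx\frac{L\log V}{T}=\frac{G^2\log V}{\sA\,T}.
\]
This is $\gg G$ with a large factor, because $\log V/\sA\to\infty$ by Lemma~\ref{lem:scales}. Hence at $t=T$ we get $t+u\log u\ge T+(\text{something}\gg T)$. As $t$ grows, $t$ increases while $u\log u$ decreases. Consider the function
\[
 f(t)=t+\frac{L-2t}{t}\log\frac{L}{t}.
\]
Its minimum is near $t\approx\sqrt{L\log(L/t)}\approx\sqrt{L\log V}\gg G$, and there the value is $\gg\sqrt{L\log V}$, which exceeds $2C_0G$ for large $x$. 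So $f(t)\ge 2T$ throughout the range with room to spare. I expect the careful handling of this minimization, keeping all corrections $o(V)$ uniformly in $T\in[c_0G,C_0G]$, to be the main obstacle.

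There are $O(L)$ blocks, and this number is absorbed into $e^{o(V)}$ because $\log L=o(V)$. This yields \eqref{eq:b4-general}, in fact with a saving beyond $e^{-T}$.
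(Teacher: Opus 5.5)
Your proof is correct. It uses the same two tools as the paper: the trivial bound $\Psi_\tau(X,p)\ll X\log(2X+2)$ for large $p$, and Rankin's method via Lemma~\ref{lem:weighted-smooth} for the rest. The difference is the split point, and that changes what must be checked.

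\textbf{The paper's split.} It cuts at $p=y^2$. Above that point the trivial bound gives exactly $xy\log x\sum_{p>y^2}p^{-2}\ll x\log x/y=xe^{-T+o(V)}$. Below it, $\log p\le 2T$ forces $u_p\asymp V$ uniformly. The clean form \eqref{eq:weighted-tail} then gives a saving $e^{-c_1V\log V}$, which beats $e^{-CT}$ for every constant $C$ because $V\log V/G\to\infty$. No optimization in $t$ is needed.

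\textbf{Your split.} You cut much further out, at $u_p=V^{1/2}$, i.e.\ $\log p\approx GV^{1/2}$. Your trivial range is therefore far below $xe^{-T}$. The cost is that your Rankin range now contains primes with $u_p$ as small as $V^{1/2}$, so you must show $t+u_p\log u_p\ge 2T-o(V)$ throughout it.

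\textbf{The step you flag as the main obstacle is immediate by cases.} For $t\ge 2T$ the term $t$ alone suffices. For $T<t<2T$ one has $u_p\ge(L-3T)/(2T)\gg V$, so $u_p\log u_p\gg V\log V$, which dominates $G\gg T$ by Lemma~\ref{lem:scales}. The calculus on $f(t)$ is not needed. Within each block $e^t<p\le e^{t+1}$, evaluate $u_p$ at the right endpoint; this costs only $O(1)$ in the exponent.

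\textbf{What your version buys.} It gives a genuinely stronger estimate for this sum. Your minimization shows the whole sum is $\ll x\exp\{T-c\sqrt{L\log_2 x}\}$, far smaller than $xe^{-T}$. The paper's bound is only of size $e^{-T}$ because its trivial range $p>y^2$ ignores the smoothness saving. The paper has no use for the stronger bound, since the other terms in Theorem~\ref{thm:transfer}, such as $x\log x/y$, are already of size $e^{-T}$.
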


\begin{proof}
Write
\[
 X_p=\frac{2xy}{p^2}.
\]
Since $T\le C_0G=o(L)$, one has $3y^3<2x$ uniformly for all
sufficiently large $x$.  In particular $X_p\ge3$ throughout
$y<p\le y^2$, so every application of
Lemma~\ref{lem:weighted-smooth} below lies in its stated $X$-range.
We split the sum at $p=y^2$.

\medskip\noindent\emph{The range above the square cutoff.}
The elementary bound
\[
 \sum_{m\le X}\tau(m)\ll X\log(2X+2)
\]
gives, with the convention that the summand is $0$ if $X_p<1$,
\begin{align}
 \sum_{p>y^2}\Psi_\tau(X_p,p)
 &\ll xy\log x\sum_{p>y^2}\frac1{p^2} \notag\\
 &\ll \frac{x\log x}{y}
 =x\exp\{-T+o(V)\}.
 \label{eq:p-large-y2-general}
\end{align}
Here $\log L=o(V)$, and $T\asymp G$.

\medskip\noindent\emph{The intermediate range.}
Assume $y<p\le y^2$ and put
\[
 u_p=\frac{\log X_p}{\log p}.
\]
For large $x$, uniformly in this range,
\[
 \log X_p=L+T-2\log p+O(1)\ge L-3T+O(1),
 \qquad \log p\le2T.
\]
Thus
\begin{equation}\label{eq:up-lower-general}
 u_p\ge \frac{L-3T+O(1)}{2T}
      =\frac{L}{2T}+O(1)\gg_{C_0} V.
\end{equation}
Also $u_p\ll_{c_0}V$, while $p^{1/2}\ge e^{T/2}\ge e^{c_0G/2}$;
hence $u_p\le p^{1/2}$ for all sufficiently large $x$.  Moreover,
\[
 u_p\log u_p\gg V\log V\gg \log_2(3p)
\]
uniformly in this range.  Thus all hypotheses of
Lemma \ref{lem:weighted-smooth} hold, and it gives, uniformly for
$y<p\le y^2$,
\begin{equation}\label{eq:psitau-mid-general}
 \Psi_\tau(X_p,p)
 \le X_p\exp\{-c_1 V\log V\}
\end{equation}
for some $c_1=c_1(c_0,C_0)>0$.
Since $V\log V/G\to\infty$ by Lemma \ref{lem:scales}, and since $T\asymp G$, the saving in \eqref{eq:psitau-mid-general} is much stronger than $e^{-T}$.  Consequently
\begin{align}
 \sum_{y<p\le y^2}\Psi_\tau(X_p,p)
 &\ll xy e^{-c_1V\log V}\sum_{p>y}\frac1{p^2}\notag\\
 &\ll x e^{-c_1V\log V}
 \ll x\exp\{-T+o(V)\}.
 \label{eq:p-mid-general}
\end{align}
Combining \eqref{eq:p-large-y2-general} and \eqref{eq:p-mid-general} proves the proposition.
\end{proof}

\begin{corollary}[The final saddle case]\label{cor:b4}
Let $G,V$ be as in Theorem \ref{thm:main} and set $y=e^G$.  Then
\begin{equation}\label{eq:b4-sum}
 \sum_{y<p\le(2xy)^{1/2}}\Psi_\tau\left(\frac{2xy}{p^2},p\right)
 \ll x\exp\{-G+o(V)\}.
\end{equation}
\end{corollary}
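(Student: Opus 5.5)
This corollary is the special case $T=G$ of Proposition~\ref{prop:b4-general}. The plan is to fix any window containing $G$, for instance $c_0=\tfrac12$ and $C_0=2$, and check that $T=G$ satisfies \eqref{eq:T-window}. This is immediate since $\tfrac12G\le G\le 2G$. With $y=e^T=e^G$, the left side of \eqref{eq:b4-general} is exactly the left side of \eqref{eq:b4-sum}. The right side becomes $x\exp\{-G+o(V)\}$, and the implied constant depends only on the now absolute choices $c_0,C_0$.

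The only point to verify is that the notation matches. The quantities $L,\sA,G,V$ in Proposition~\ref{prop:b4-general} are defined by the same formulas as in Theorem~\ref{thm:main}. The $o(V)$ term in the proposition is uniform over the window, so in particular it is valid at the single point $T=G$.

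There is no real obstacle here. The substantive work has already been done in Proposition~\ref{prop:b4-general}: the split at $p=y^2$, and the application of Lemma~\ref{lem:weighted-smooth} in the intermediate range, which uses $V\log V/G\to\infty$ from Lemma~\ref{lem:scales}. The corollary simply specialises that estimate.
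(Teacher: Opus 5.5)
Your proposal is correct and is exactly the paper's proof, which consists of the single line ``apply Proposition~\ref{prop:b4-general} with $T=G$.'' Fixing an explicit window such as $c_0=\tfrac12$, $C_0=2$ simply makes explicit that the implied constant is absolute.
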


\begin{proof}
Apply Proposition \ref{prop:b4-general} with $T=G$.
\end{proof}

\section{The rank-one estimate}

\begin{proposition}[Rank-one case]\label{prop:rank-one-case}
Uniformly for positive odd $h\le e^G$,
\[
 S_h^\varphi(x)\ll x e^{-G+o(V)}.
\]
\end{proposition}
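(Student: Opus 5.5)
We split $S_h^\varphi(x)$ according to the size of the largest prime of the common totient, with cutoff $y=e^G$. Every $n\le x$ with $\varphi(n)=\varphi(n+h)$ either has $P^+(\varphi(n))\le e^G$ or is counted by $B_h(x;e^G)$. Hence
\[
 S_h^\varphi(x)\le \Phi(x,e^G)+B_h(x;e^G).
\]
The first term is a smooth-totient count. Lemma~\ref{lem:bfps-saddle} with $C=1$ gives $\Phi(x,e^G)\ll x\exp\{-G+o(V)\}$. This does not depend on $h$, so it is trivially uniform in $h$.

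For the second term we apply Theorem~\ref{thm:transfer} with $y=e^G$. The hypotheses are that $h$ is odd, $h\le y$, and $2\le y\le x$; the last holds for large $x$ since $G=o(L)$ by Lemma~\ref{lem:scales}. The theorem bounds $B_h(x;e^G)$ by five terms.
\begin{itemize}
\item The first four are $x/y$, $x^{1/2}$, $x\log x/y$ and $x(\log y)^2/y$. With $y=e^G$ these are at most $x e^{-G}L$ and $x e^{-G}G^2$, plus $x^{1/2}$.
\item The factors $L$ and $G^2\le L^2$ are $\exp\{O(\log L)\}=\exp\{o(V)\}$, again by Lemma~\ref{lem:scales}.
\item The term $x^{1/2}$ is negligible because $G=o(L)$.
\item The fifth term, the divisor-weighted smooth sum over $y<p\le (2xy)^{1/2}$, is exactly the quantity bounded in Corollary~\ref{cor:b4}, namely $\ll x\exp\{-G+o(V)\}$.
\end{itemize}

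The transfer theorem has an absolute implied constant and the earlier estimates do not depend on $h$. Summing the contributions therefore gives $S_h^\varphi(x)\ll x e^{-G+o(V)}$ uniformly for odd $h\le e^G$.

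The main obstacle has already been handled upstream. The only point to check is that the transfer theorem is applied in its valid range and that its error terms are uniform in $h$. The oddness of $h$ enters only through Lemma~\ref{lem:small-injection}, via Corollary~\ref{cor:small}, inside Theorem~\ref{thm:transfer}.
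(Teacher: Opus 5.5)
Your proposal is correct and follows the paper's proof essentially verbatim: you split at $y=e^G$, bound the smooth part by Lemma~\ref{lem:bfps-saddle}, and bound $B_h(x;e^G)$ via Theorem~\ref{thm:transfer} and Corollary~\ref{cor:b4}, absorbing the elementary terms with Lemma~\ref{lem:scales}. Your explicit checks of the transfer hypotheses and of uniformity in $h$ are exactly the points the paper's proof leaves implicit.
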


\begin{proof}
Let
\[
 y=e^G.
\]
Let $h$ be odd and satisfy $1\le h\le y$.

Split the solutions counted by $S_h^\varphi(x)$ according to whether $P^+(\varphi(n))\le y$ or $P^+(\varphi(n))>y$.  If $P^+(\varphi(n))\le y$, then $n$ is counted by $\Phi(x,y)$, and Lemma \ref{lem:bfps-saddle} gives
\begin{equation}\label{eq:smooth-final}
 \#\{n\le x:\varphi(n)=\varphi(n+h),\ P^+(\varphi(n))\le y\}
 \le \Phi(x,y)
 \ll x e^{-G+o(V)}.
\end{equation}
The remaining solutions are counted by $B_h(x;y)$.  Theorem \ref{thm:transfer} and Corollary \ref{cor:b4} give
\begin{align*}
 B_h(x;y)
 &\ll \frac{x}{y}+x^{1/2}+\frac{x\log x}{y}
       +\frac{x(\log y)^2}{y}
       +x e^{-G+o(V)}.
\end{align*}
By Lemma \ref{lem:scales},
\[
 \frac{x}{y},\quad x^{1/2},\quad \frac{x\log x}{y},\quad
 \frac{x(\log y)^2}{y}
 \quad\text{are all}\quad \ll x e^{-G+o(V)}.
\]
Together with \eqref{eq:smooth-final}, this proves
\[
 S_h^\varphi(x)\ll x e^{-G+o(V)}
\]
uniformly for odd $h\le y$.
\end{proof}

\section{Optimization of the rank-one saddle}\label{sec:optimality}

The cutoff in Proposition~\ref{prop:rank-one-case} sits at the saddle of
the rank-one architecture.  Combining BFPS for the smooth part with Theorem
\ref{thm:transfer} and Proposition~\ref{prop:b4-general} for the nonsmooth
part, the exponent is maximized at $T=G$.  Section~\ref{sec:rank-amplification}
then changes the nonsmooth exponent from $T$ to $JT$ by using $J$ common
large-prime occurrences.

\begin{proposition}[Saddle with a variable cutoff]\label{prop:variable-saddle}
Fix $0<c_0<C_0<\infty$.  Let $T$ satisfy $c_0G\le T\le C_0G$, put $y=e^T$, and let $h$ be a positive odd integer with $h\le y$.  Then
\begin{equation}\label{eq:variable-saddle}
 S_h^\varphi(x)\ll_{c_0,C_0} x\exp\left\{-\min\left(T,\frac{G^2}{T}\right)+o(V)\right\},
\end{equation}
where the $o(V)$ term is uniform in $T$ and $h$.
\end{proposition}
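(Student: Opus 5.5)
The plan is to repeat the proof of Proposition~\ref{prop:rank-one-case} with the cutoff $y=e^T$ in place of $e^G$. We split the solutions $n\le x$ of $\varphi(n)=\varphi(n+h)$ according to whether $P^+(\varphi(n))\le y$ or $P^+(\varphi(n))>y$.

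The smooth class is contained in $\Phi(x,e^T)$. We apply Lemma~\ref{lem:bfps-uniform}\textup{(i)} with $W=x$ and $(c,C)=(c_0,C_0)$; this point lies in the admissible rectangle $x^{1-\delta_x}\le W\le 2x$. It gives
\[
 \Phi(x,e^T)\le x\exp\{-G^2/T+\epsilon_{c_0,C_0}(x)V\},
\]
with $\epsilon_{c_0,C_0}(x)\to0$ uniformly for $T$ in the window, so the smooth class contributes $x\exp\{-G^2/T+o(V)\}$. This is exactly why Lemma~\ref{lem:bfps-uniform} was stated over the whole rectangle rather than only at $T=G$ as in Lemma~\ref{lem:bfps-saddle}.

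The nonsmooth class is counted by $B_h(x;y)$. Since $h$ is odd and $h\le y\le x$, Theorem~\ref{thm:transfer} applies. Its last term is bounded by Proposition~\ref{prop:b4-general}, which gives $x\exp\{-T+o(V)\}$ uniformly in the window. The remaining terms are
\[
 \frac{x}{y},\qquad x^{1/2},\qquad \frac{x\log x}{y},\qquad \frac{x(\log y)^2}{y}.
\]
Each is $\ll x\exp\{-T+O(\log L)\}$, using $\log y=T\ll G\le L$ and $T=o(L)$ (so $x^{1/2}\le xe^{-T}$). By Lemma~\ref{lem:scales}, $\log L=o(V)$, so all of them are $x\exp\{-T+o(V)\}$. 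The implied constants depend only on $c_0,C_0$.

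Adding the two classes gives
\[
 S_h^\varphi(x)\ll x\exp\{-G^2/T+o(V)\}+x\exp\{-T+o(V)\}\le 2x\exp\bigl\{-\min(T,G^2/T)+o(V)\bigr\},
\]
which is \eqref{eq:variable-saddle}.

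The only real obstacle is uniformity of the $o(V)$ terms in both $T$ and $h$. In $h$ this is automatic, because $h$ enters only through the hypothesis $h\le y$ of Theorem~\ref{thm:transfer}, whose implied constant is absolute. In $T$ it is supplied by the suprema built into Lemma~\ref{lem:bfps-uniform}\textup{(i)} and Proposition~\ref{prop:b4-general}. So the argument reduces to checking that the hypotheses of those results match the window $c_0G\le T\le C_0G$.
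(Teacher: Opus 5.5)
Your proposal is correct and follows the paper's own proof essentially step for step. You split according to whether $P^+(\varphi(n))\le y$, bound the smooth class by Lemma~\ref{lem:bfps-uniform}\textup{(i)} at $W=x$, and bound $B_h(x;y)$ through Theorem~\ref{thm:transfer} and Proposition~\ref{prop:b4-general}, absorbing the elementary terms via $\log L=o(V)$ and $T=o(L)$, with uniformity in $T$ and $h$ coming from the window-uniform error functions and the absolute constant in the transfer theorem.
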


\begin{proof}
The nonsmooth part is estimated by Theorem \ref{thm:transfer}.  Proposition \ref{prop:b4-general} bounds the final smooth-multiplier sum by $x\exp\{-T+o(V)\}$; the elementary terms
\[
 x/y,
 \quad x^{1/2},
 \quad x\log x/y,
 \quad x(\log y)^2/y
\]
are all $\ll x\exp\{-T+o(V)\}$: the logarithmic factors contribute only $\log L$ or $\log T$, both $o(V)$, and $x^{1/2}=x\exp\{-L/2\}$ is smaller since $T=o(L)$.  Thus
\begin{equation}\label{eq:nonsmooth-variable}
 B_h(x;y)\ll x\exp\{-T+o(V)\}.
\end{equation}

For the smooth part, Lemma~\ref{lem:bfps-uniform}\textup{(i)},
with $W=x$, gives
\begin{equation}\label{eq:smooth-variable}
 \Phi(x,e^T)\ll x\exp\left\{-\frac{G^2}{T}+o(V)\right\}.
\end{equation}
Combining \eqref{eq:nonsmooth-variable} and \eqref{eq:smooth-variable} proves \eqref{eq:variable-saddle}.
\end{proof}

\begin{corollary}[Optimization within the rank-one estimates]
\label{cor:method-optimal}
The function
\[
 T\longmapsto\min(T,G^2/T)
\]
has its exact maximum $G$ at $T=G$.  Within any fixed window
$c_0G\le T\le C_0G$:
\begin{enumerate}[label=\textup{(\roman*)},leftmargin=2em]
\item $T/G\to1$ is exactly the condition that preserves the leading
coefficient $1$ in front of $G$;
\item obtaining an exponent $G+o(V)$ from
\eqref{eq:variable-saddle} requires, and is ensured by,
$T=G+o(V)$.
\end{enumerate}
If $T=\lambda G$ with fixed $\lambda>0$, the exponent is
$\min(\lambda,\lambda^{-1})G+o(V)$.
\end{corollary}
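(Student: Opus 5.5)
The corollary is pure real-variable bookkeeping about $f(T)=\min(T,G^2/T)$ on the window $c_0G\le T\le C_0G$. The exponent in \eqref{eq:variable-saddle} is $f(T)$ up to the uniform $o(V)$ error. No further number theory is needed; Proposition~\ref{prop:variable-saddle} is taken as given.

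First, the exact maximum. The function $T\mapsto T$ is increasing and $T\mapsto G^2/T$ is decreasing, and they cross exactly at $T=G$. So $f(T)=T<G$ for $T<G$, $f(T)=G^2/T<G$ for $T>G$, and $f(G)=G$. Writing $T=\lambda G$ gives $f(T)=\min(\lambda,\lambda^{-1})G$, which proves the final sentence with the same $o(V)$ error inherited from \eqref{eq:variable-saddle}.

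For (i), I would use the identity
\[
 G-f(T)=\bigl(1-\min(T/G,\,G/T)\bigr)G
 \asymp_{c_0,C_0}\frac{|T-G|}{G}\,G=|T-G|.
\]
This is valid throughout the window, since $|1-\lambda|$ and $|1-\lambda^{-1}|$ are comparable when $\lambda\in[c_0,C_0]$. Hence $f(T)/G\to1$ if and only if $T/G\to1$; that is, the leading coefficient $1$ in front of $G$ is preserved exactly when $T/G\to1$.

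For (ii), the same identity gives $G-f(T)\asymp_{c_0,C_0}|T-G|$. So $f(T)=G+o(V)$ holds if and only if $|T-G|=o(V)$. Combined with the uniform $o(V)$ in \eqref{eq:variable-saddle}, the exponent is $G+o(V)$ precisely when $T=G+o(V)$. The \emph{requires} direction has to be read as a statement about the bound \eqref{eq:variable-saddle} itself, not about the true size of $S_h^\varphi$. Under that reading: if $|T-G|\ge\varepsilon V$ along a sequence, the bound's exponent is at most $G-c\varepsilon V+o(V)$, so it fails to be $G+o(V)$.

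The only delicate point is exactly this interpretation of ``requires''. I would state it explicitly and then verify that the comparability constant depends only on $c_0,C_0$, so the $o(V)$ statements stay uniform in $T$ and $h$.
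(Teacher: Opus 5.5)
Your proof is correct. It differs from the paper's only in how you control the loss $G-\min(T,G^2/T)$.

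The paper writes $T=G+\Delta$ and uses the local expansion $\min(T,G^2/T)=G-|\Delta|+O(\Delta^2/G)$, valid when $|\Delta|=o(G)$, plus the fixed-ratio formula when $T/G$ stays away from $1$. You instead use a global two-sided bound on the whole window. In fact there is an exact identity
\[
 G-\min(T,G^2/T)=\frac{|T-G|}{\max(1,T/G)},
\]
so $|T-G|/\max(1,C_0)\le G-\min(T,G^2/T)\le|T-G|$ throughout $c_0G\le T\le C_0G$.

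Your version buys uniformity. It handles every admissible choice $T=T(x)$ at once, including ones where $T/G$ oscillates, without passing to subsequences. It also gives the ``requires'' half of (ii) directly. With the expansion, one must first note that a loss which is $o(V)$ is also $o(G)$, since $V=o(G)$; this forces $T/G\to1$ before the expansion can be used.

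The paper's expansion, in return, identifies the loss near the saddle to first order with constant exactly $1$. Your comparability does not, since it can lose a factor up to $C_0$ on the side $T>G$.

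Your remark that ``requires'' refers to the exponent of the bound \eqref{eq:variable-saddle}, not to the true size of $S_h^\varphi(x)$, matches the reading implicit in the paper.
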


\begin{proof}
The exact maximum is immediate from the equality point of $T$ and
$G^2/T$.  Write $T=G+\Delta$.  When $|\Delta|=o(G)$,
\[
 \min(T,G^2/T)=G-|\Delta|+O(\Delta^2/G).
\]
Thus $T/G\to1$ preserves the leading coefficient, whereas the loss is
$o(V)$ precisely when $|\Delta|=o(V)$.  If $T/G$ stays away from $1$,
the fixed-ratio formula gives a coefficient strictly below $1$.
\end{proof}

\begin{remark}[The second-order term]
The term $\log_4 x-\log 2$ is forced by the smooth side of the saddle.  Indeed, for $T\asymp G$,
\[
 \log_2(L/T)+\log_3(L/T)
 =\log_3 x+\log_4 x-\log 2+o(1).
\]
Thus the displayed secondary term is exactly the BFPS secondary term transported through the balance $T=G^2/T$.
\end{remark}

\section{Comparison with the older large-cofactor estimate}

This section explains where the improvement occurs.  In the large-cofactor range one has
\[
 q_1q_2\ll xy,
 \qquad q_i=k_ip+1.
\]
If one ignores all information about $k_1,k_2$, then the number of pairs is bounded on the scale
\[
 \sum_{p>y}\#\{k_1k_2\ll xy/p^2\},
\]
which gives a much larger unrestricted divisor-pair contribution.  Yamada's original optimization balances this loss by taking the nonsmooth cutoff effectively at a square, which is the source of the leading constant $2^{-1/2}$ in \eqref{eq:yamada-intro}.

In the present proof the identity $q_i-1=k_ip\mid M$ and the choice $p=P^+(M)$ imply
\[
 P^+(k_1k_2)\le p.
\]
The corresponding contribution is therefore
\[
 \sum_{p>y}\Psi_\tau\left(\frac{2xy}{p^2},p\right).
\]
At the saddle $y=e^G$ this is already $\ll xe^{-G+o(V)}$, as shown in Corollary \ref{cor:b4}.  This rank-one large-cofactor calculation uses no distribution theorem for primes in arithmetic progressions: the primality of $k_ip+1$ is discarded after the skeleton has been extracted.  The later source-block mass estimate does use the Brun--Titchmarsh theorem.

This comparison concerns only one selected largest prime.  The improvement
past constant $1$ comes from a different decomposition: Section
\ref{sec:rank-amplification} simultaneously uses $J$ prime occurrences of
the common totient.  The rank-one transfer remains the correct treatment of
the tail $P^+(M)>y^J$.

\section{The all-shifts decomposition}\label{sec:allshifts}

The rank-one proof already isolates the diagonal for arbitrary shifts.  We
first record that structure and its transfer estimate.  We then insert the
rank-amplification argument and obtain the full fixed-rank theorem.

The notation $\mathcal J_h$, $A_j$, $B_j$, $\Gamma_j$, and
$D_{h,>Y}^\varphi(x)$ was defined before Theorem~\ref{thm:main}.  We now
prove its support, parametrization, and sieve bound.

\begin{lemma}[Support and reciprocal mass of the same-support set]\label{lem:J-finite}
Let $h\ge1$.  Every $j\in\mathcal J_h$ satisfies $\rad(j)=\rad(j+h)\mid\rad(h)$.
Consequently
\begin{equation}\label{eq:J-h-quantitative}
 \sum_{j\in\mathcal J_h}\frac1j
 \le\prod_{p\mid h}\Bigl(1-\frac1p\Bigr)^{-1}=\frac h{\varphi(h)},
\end{equation}
and, for every $t\ge1$,
\begin{equation}\label{eq:J-h-count}
 \#\{j\in\mathcal J_h:j\le t\}
 \le\Bigl(1+\frac{\log t}{\log2}\Bigr)^{\omega(h)}.
\end{equation}
Moreover $A_j$, $B_j$ and $A_j-B_j=h/d_j$ have all their prime factors
among those of $h$.  If $h$ is odd, then $\mathcal J_h=\varnothing$.
\end{lemma}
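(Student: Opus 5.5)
The proof rests on one elementary observation: any common prime factor of $j$ and $j+h$ divides their difference $h$.

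\emph{Support.} Let $j\in\mathcal J_h$ and let $p\mid j$. Since $\rad(j)=\rad(j+h)$, also $p\mid j+h$, hence $p\mid h$. Thus $\rad(j)=\rad(j+h)\mid\rad(h)$.

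\emph{Parity.} If $h$ is odd, then $j$ and $j+h$ have opposite parity. So $2$ divides exactly one of them, and their radicals differ. Hence $\mathcal J_h=\varnothing$.

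\emph{Reciprocal mass.} The map $j\mapsto j$ embeds $\mathcal J_h$ into the set of positive integers all of whose prime factors divide $h$. Dropping the same-support condition and summing over this larger set gives the Euler product
\[
 \sum_{j\in\mathcal J_h}\frac1j\le\sum_{\rad(k)\mid\rad(h)}\frac1k
 =\prod_{p\mid h}\Bigl(1-\frac1p\Bigr)^{-1}=\frac{h}{\varphi(h)}.
\]
This proves \eqref{eq:J-h-quantitative}.

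\emph{Counting.} Each $j\le t$ in $\mathcal J_h$ can be written as $\prod_{p\mid h}p^{e_p}$. Since $2^{e_p}\le p^{e_p}\le t$, every exponent satisfies $0\le e_p\le\log t/\log 2$. Each exponent therefore takes at most $1+\log t/\log 2$ values, and there are $\omega(h)$ of them, which gives \eqref{eq:J-h-count}.

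\emph{Prime factors of $A_j$, $B_j$, $A_j-B_j$.} We have $A_j=(j+h)/d_j$, which divides $j+h$, so its prime factors lie in $\rad(j+h)\mid\rad(h)$. Likewise $B_j=j/d_j$ divides $j$. Finally,
\[
 A_j-B_j=\frac{(j+h)-j}{d_j}=\frac{h}{d_j}.
\]
Here $d_j\mid h$ because $d_j$ divides both $j$ and $j+h$. So $h/d_j$ is an integer divisor of $h$, and its prime factors lie among those of $h$.

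No step presents a real obstacle. The only point needing care is that the reciprocal-mass bound deliberately drops the same-support constraint and sums over all $\rad(h)$-smooth integers.
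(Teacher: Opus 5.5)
Your proposal is correct and follows the paper's proof essentially step for step. Both arguments derive the support statement from the fact that a common prime of $j$ and $j+h$ divides $h$, bound the reciprocal mass by the full Euler product over integers supported on the primes dividing $h$, count via exponent vectors using $2^{\nu_p(j)}\le t$, use $d_j\mid h$ for the prime factors of $A_j$, $B_j$ and $A_j-B_j$, and settle the odd-$h$ case by parity.
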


\begin{proof}
If $j\in\mathcal J_h$ and $p\mid j$, then $p\mid j+h$ because the prime
supports agree, so $p\mid h$; thus $\rad(j)=\rad(j+h)\mid\rad(h)$ and
$\mathcal J_h$ lies in the multiplicative semigroup generated by the
primes dividing $h$.  That semigroup has the convergent Euler product
$\sum_{\rad(j)\mid\rad(h)}j^{-1}=\prod_{p\mid h}(1-1/p)^{-1}=h/\varphi(h)$,
which gives \eqref{eq:J-h-quantitative}.  For \eqref{eq:J-h-count}, such a
$j\le t$ is determined by its exponent vector $(\nu_p(j))_{p\mid h}$, and
$2^{\nu_p(j)}\le j\le t$ forces $\nu_p(j)\le\log t/\log2$, so each exponent
takes at most $1+\log t/\log2$ values.  Since $d_j\mid h$ and
$\rad(j)=\rad(j+h)\mid\rad(h)$, the integers $A_j=(j+h)/d_j$, $B_j=j/d_j$
and $A_j-B_j=h/d_j$ have their prime factors among those of $h$.  Finally,
if $h$ is odd then $j$ and $j+h$ have opposite parity, so their prime
supports cannot coincide.
\end{proof}

\begin{remark}
Pollack--Pomerance--Trevi\~no prove the much stronger bound
$|\mathcal J_h|\le3\cdot7^{3+2\omega(h)}$ \cite[Lemma~3.2]{PPT}, by
observing that $j$ and $j+h$ are $S$-units for the primes dividing $h$ and
invoking Evertse's theorem on $S$-unit equations \cite{EGST}.  Nothing
below uses finiteness of $\mathcal J_h$: only
\eqref{eq:J-h-quantitative}, \eqref{eq:J-h-count} and the support
statement are needed, and all three are elementary.
\end{remark}

\begin{definition}[Diagonal skeleton]
In the notation of Lemma~\ref{lem:small-injection}, a provider skeleton is
called \emph{diagonal} when
\[
 m_2b=m_1a.
\]
The shift equation then forces $m_2=m_1+h$, and
Lemma~\ref{lem:support} forces equal prime support.  Every other skeleton
is called off-diagonal.
\end{definition}

\begin{proposition}[Exact diagonal prime-pair parametrization]\label{prop:diag-param}
Let $h\ge1$ and $j\in\mathcal J_h$.  If $r\ge1$ satisfies the conditions in the definition of $D_{h,>y}^\varphi(x)$ except possibly the size and smoothness inequalities, then
\[
        n=j(A_jr+1)
\]
satisfies $\varphi(n)=\varphi(n+h)$.  Conversely, for every $h\ge1$, every diagonal skeleton in
Proposition~\ref{prop:transfer-diagonal} is obtained in this way, with
$j=m_1$ and $r$ equal to the common multiplier in the two provider primes.
\end{proposition}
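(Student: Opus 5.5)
The forward direction is a direct multiplicativity computation. Put $q_1=A_jr+1$ and $q_2=B_jr+1$; both are prime, $q_1\nmid j$ and $q_2\nmid j+h$. Then
\[
 \varphi(n)=\varphi(j)\,A_jr=\Gamma_jr .
\]
Next I would verify that $n+h=(j+h)(B_jr+1)$. Indeed,
\[
 (j+h)(B_jr+1)-j(A_jr+1)=r\bigl((j+h)B_j-jA_j\bigr)+h,
\]
and $(j+h)B_j=(j+h)j/d_j=jA_j$, so the bracket vanishes. Hence
\[
 \varphi(n+h)=\varphi(j+h)\,B_jr=\Gamma_jr ,
\]
using the identity $\varphi(j)A_j=\varphi(j+h)B_j$. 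That identity follows from equal support: $\varphi(j)/j=\varphi(j+h)/(j+h)$, and multiplying by $(j+h)j/d_j$ gives it. So $\varphi(n)=\varphi(n+h)$.

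For the converse, start from a diagonal skeleton: $n=m_1q_1$, $n+h=m_2q_2$ with $q_i=k_ip+1$, $q_i\nmid m_i$, and $m_2b=m_1a$, in the notation $g,a,b,c$ of Lemma~\ref{lem:small-injection}. By the definition of diagonal skeleton (via \eqref{eq:small-cpd} and Lemma~\ref{lem:support}), $m_2=m_1+h$ and $\rad(m_1)=\rad(m_2)$. So $j:=m_1\in\mathcal J_h$.

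The key step is to identify $a=A_j$ and $b=B_j$. From $m_2b=m_1a$ with $(a,b)=1$, we get $a=m_2/(m_1,m_2)$ and $b=m_1/(m_1,m_2)$. Since $(m_1,m_2)=(j,j+h)=d_j$, this gives $a=(j+h)/d_j=A_j$ and $b=j/d_j=B_j$. This is the step needing care: $a,b$ were defined through the totients, so one must use exactly the coprimality of $a,b$ together with $m_2b=m_1a$. The equality $\varphi(m_2)/\varphi(m_1)=m_2/m_1$, again from the equal Euler factors, confirms this is consistent.

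Then $k_1=ac$ and $k_2=bc$. Setting $r=cp$, the common multiplier, gives
\[
 q_1=A_jr+1,\qquad q_2=B_jr+1,
\]
both prime, with $(q_1,j)=1$ and $(q_2,j+h)=1$ because $q_i\nmid m_i$ and $q_i$ is prime. Finally $n=j(A_jr+1)$ and $n+h=(j+h)(B_jr+1)$, which is exactly the parametrization \eqref{eq:intro-diagonal-param}.

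Size and smoothness conditions are excluded by the statement, so nothing more is needed. One can still note that $\Gamma_jr=\varphi(n)=M$, so $P^+(\Gamma_jr)=p$; this is how the cutoff condition will transfer later.
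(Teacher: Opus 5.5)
Your proposal is correct and follows essentially the same route as the paper. In the forward direction you use $(j+h)B_j=jA_j$ together with $\varphi(j)A_j=\varphi(j+h)B_j$ from equal supports. In the converse you identify the coprime pair $(a,b)$ with $(A_j,B_j)$ via $m_2b=m_1a$ and set $r=cp$. Your closing observation that $\Gamma_jr=M$, so that $P^+(\Gamma_jr)=p$, is a useful point that the paper leaves implicit when the cutoff condition is transferred in Proposition~\ref{prop:transfer-diagonal}.
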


\begin{proof}
Since $j\in\mathcal J_h$ we have $\rad(j)=\rad(j+h)$, so
$\varphi(n)/n=\prod_{p\mid n}(1-1/p)$ gives
\[
        \frac{\varphi(j)}{j}=\frac{\varphi(j+h)}{j+h}.
\]
Consequently
\[
        A_j\varphi(j)=\frac{j+h}{d_j}\varphi(j)
        =\frac{j}{d_j}\varphi(j+h)=B_j\varphi(j+h).
\]
Also
\[
        (j+h)(B_jr+1)-j(A_jr+1)=h,
\]
because $(j+h)B_j=jA_j=j(j+h)/d_j$.  Hence
\[
        n+h=(j+h)(B_jr+1).
\]
The primality and coprimality hypotheses give
\[
        \varphi(n)=\varphi(j)A_jr,\qquad
        \varphi(n+h)=\varphi(j+h)B_jr,
\]
and these are equal by the previous displayed identity.

Conversely, in a diagonal skeleton the notation of Lemma \ref{lem:small-injection} gives
\[
        m_2=m_1+h,\qquad
        \frac{\varphi(m_1)}{m_1}=\frac{\varphi(m_2)}{m_2}.
\]
Thus $m_1\in\mathcal J_h$.  If $d=(m_1,m_2)$, then the coprime pair $(a,b)$ occurring in the proof of Lemma \ref{lem:small-injection} is
\[
        a=\frac{m_2}{d},\qquad b=\frac{m_1}{d}.
\]
The two provider primes are $ar+1$ and $br+1$ for $r=cp$.  This is precisely the parametrization above.
\end{proof}

\begin{lemma}[Two fixed affine-linear forms]\label{lem:two-linear-sieve}
Let
\[
 L_1(r)=A_1r+B_1,\qquad L_2(r)=A_2r+B_2
\]
be primitive integer linear forms with positive leading coefficients,
positive constant terms, and nonzero determinant $A_1B_2-A_2B_1$.  Then,
for $R\ge3$,
\[
 \#\{1\le r\le R:L_1(r)\text{ and }L_2(r)\text{ are positive primes}\}
 \ll_{L_1,L_2}\frac{R}{(\log R)^2}.
\]
The implied constant depends on the two forms only through the set of
primes dividing $2\Delta$, where $\Delta=A_1A_2(A_1B_2-A_2B_1)$; in particular the estimate
is uniform over any family of pairs of forms whose leading coefficients and
determinants are supported on one fixed finite set of primes.
\end{lemma}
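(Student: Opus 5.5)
This is the standard upper-bound sieve for two linear forms, and I would derive it from the Selberg upper-bound sieve, which the paper quotes. Let $\mathcal P(r)=L_1(r)L_2(r)$. Put $z=R^{1/4}$ and discard the $r\le R$ with $L_i(r)\le z$ for some $i$; there are $O(z)$ of them, with constants depending on the forms. For every remaining $r$ counted in the statement, both $L_1(r)$ and $L_2(r)$ are primes exceeding $z$. Hence $\mathcal P(r)$ has no prime factor at most $z$, and the count is bounded by the sifted quantity $S(\mathcal A,\mathcal P,z)$ for $\mathcal A=\{\mathcal P(r):1\le r\le R\}$.

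\emph{Local densities.} For a prime $\ell$, let $\omega(\ell)$ be the number of residues $r\bmod\ell$ with $\ell\mid\mathcal P(r)$.
\begin{itemize}
\item If $\ell\nmid 2\Delta$, then $\ell\nmid A_1A_2$, so each form has exactly one root mod $\ell$. The two roots are distinct because $\ell\nmid A_1B_2-A_2B_1$, so $\omega(\ell)=2$. Since $\ell\ne2$, this is $<\ell$.
\item If $\ell\mid 2\Delta$, then $\omega(\ell)\le2$. A form with $\ell\mid A_i$ has no root mod $\ell$, since primitivity gives $\ell\nmid B_i$.
\item If $\omega(\ell)=\ell$ for some $\ell$ (possible only for $\ell=2$, when both forms have odd leading coefficient and the roots differ), then every $r$ makes $\mathcal P(r)$ divisible by $\ell$. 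Both values can then be prime only if one of them equals $\ell$, and this happens for $O(1)$ values of $r$. Such $\ell$ are simply excluded from the sieving primes.
\end{itemize}
In all cases $\omega(\ell)\le 2$, and by CRT, for squarefree $d$ we have $\#\{r\le R: d\mid\mathcal P(r)\}=R\,g(d)+O(\omega(d))$, where $g$ is multiplicative with $g(\ell)=\omega(\ell)/\ell$ and $\omega(d)=\prod_{\ell\mid d}\omega(\ell)\le\tau(d)$. This is a sieve of dimension $2$ with remainders $|r_d|\le\tau(d)$.

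\emph{Selberg sieve.} Apply the Selberg upper bound with level $D=z^2=R^{1/2}$. This gives
\[
S\le \frac{R}{\sum_{d\le z}\mu^2(d)\prod_{\ell\mid d}\frac{g(\ell)}{1-g(\ell)}}+\sum_{d\le D}\mu^2(d)\tau_3(d)\tau(d).
\]
The second sum is $\ll R^{1/2}(\log R)^{c}$, which is negligible. For the main term, restrict the denominator to $d$ coprime to $2\Delta$, where $g(\ell)/(1-g(\ell))=2/(\ell-2)\ge 2/\ell$. The standard estimate for the two-dimensional sum, $\sum_{d\le z,(d,2\Delta)=1}\mu^2(d)2^{\omega(d)}/d\gg_{2\Delta}(\log z)^2$, obtained by comparing with $\prod_{\ell\le z,\ell\nmid2\Delta}(1+2/\ell)$ via Mertens/Rankin, gives a denominator $\gg\mathfrak S^{-1}(\log R)^2$. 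Here $\mathfrak S=\prod_{\ell\mid 2\Delta}(1-1/\ell)^{-2}$ depends only on the prime set. This yields the bound $R/(\log R)^2$.

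\emph{Uniformity and main obstacle.} The main point to watch is that the implied constant depends only on the prime set of $2\Delta$. The densities $\omega(\ell)$ for $\ell\nmid2\Delta$ are exactly $2$, independent of the forms. The excluded primes and the lower bound for the denominator depend only on $\{\ell\mid 2\Delta\}$. The remainder bound $|r_d|\le\tau(d)$ is absolute.

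The one place the forms themselves enter is the discarded range $L_i(r)\le z$, which is trivially bounded by $z+O(1)$ since $L_i(r)\ge r$. The exceptional case $\omega(2)=2$ contributes at most one $r$ per form. Hence every error is $O(R^{1/2+o(1)})$ with absolute constants, and the estimate is uniform over any family whose leading coefficients and determinants are supported on one fixed finite set of primes.
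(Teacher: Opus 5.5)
Your proposal is correct and follows essentially the same route as the paper. Both use the same local-density analysis ($\nu(\ell)=2$ off $2\Delta$, $\nu(\ell)\le 2$ by primitivity, and the degenerate case $\nu(\ell)=\ell$ giving only $O(1)$ values of $r$), the Selberg upper-bound sieve in dimension two with CRT remainders $|r_d|\le\tau(d)$, and the discarding of those $r$ for which one prime value is at most $z$. The only difference is how the main term is handled: the paper quotes the product form $R\prod_{\ell\le z}(1-\nu(\ell)/\ell)$ from Halberstam--Richert and applies Mertens, whereas you bound Selberg's $G(z)$ from below directly.

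For that lower bound, note that $\prod_{\ell\le z}(1+2/\ell)$ majorizes rather than minorizes the truncated sum, so your comparison needs a supporting argument. A clean one-line justification is $G(z)\ge\sum_{n\le z,\,(n,2\Delta)=1}\tau(n)/n\ge\bigl(\sum_{m\le\sqrt z,\,(m,2\Delta)=1}1/m\bigr)^2\gg(\log z)^2$, with the implied constant depending only on the primes dividing $2\Delta$.
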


\begin{proof}
Put $F(r)=L_1(r)L_2(r)$ and, for a prime $\ell$, let
\[
 \nu(\ell)=\#\{a\pmod\ell:F(a)\equiv0\pmod\ell\}.
\]
If $\nu(\ell)=\ell$ for some prime $\ell$, then every value of $F$ is
divisible by $\ell$.  Whenever both linear values are prime, one of them
must equal $\ell$, which leaves at most two values of $r$.  We may
therefore assume $\nu(\ell)<\ell$ for every $\ell$.

Let
\[
 \Delta=A_1A_2(A_1B_2-A_2B_1).
\]
For $\ell\nmid\Delta$, each form has one root modulo $\ell$ and the two
roots are distinct, so $\nu(\ell)=2$.  For squarefree $d$, extend
$\nu$ multiplicatively.  The Chinese remainder theorem gives
\begin{equation}\label{eq:sieve-local-count}
 \#\{r\le R:d\mid F(r)\}
 =\frac{\nu(d)}dR+r_d,\qquad |r_d|\le\nu(d).
\end{equation}
The exceptional primes dividing $\Delta$ form a fixed finite set and,
because $\nu(\ell)<\ell$, contribute only a nonzero constant to the
sifting density.  Primitivity gives $\nu(\ell)\le2$ for every $\ell$ (if
$\ell\mid A_i$ then $\ell\nmid B_i$, so $L_i$ has no root modulo $\ell$),
and $\nu(\ell)=2$ for every $\ell\nmid\Delta$; hence the sieve hypotheses
$(\Omega_1)$ and $(\Omega_2(2,L))$ hold with absolute constants, and the
entire dependence of the implied constant on the two forms is through the
primes dividing $2\Delta$ (the prime $2$ is carried along so that it need not
be sifted separately; in the application below $\Delta$ is even, so this is
no loss).  This is the same upper-bound-sieve input used for the
analogous pair of linear forms in \cite[Theorem~3.1]{PPT}.

The Selberg upper-bound sieve in dimension two
\cite[Theorem~5.7]{HR}, applied to \eqref{eq:sieve-local-count}, gives
for $2\le z\le R^{1/4}$
\begin{equation}\label{eq:selberg-two-form}
 \#\{r\le R:(F(r),P(z))=1\}
 \ll_{L_1,L_2}
 R\prod_{\ell\le z}\left(1-\frac{\nu(\ell)}\ell\right)
 +\sum_{\substack{d\le z^2\\d\mid P(z)}}3^{\omega(d)}|r_d|.
\end{equation}
Since $\nu(\ell)=2$ outside a fixed set,
Mertens' theorem gives
\[
 \prod_{\ell\le z}\left(1-\frac{\nu(\ell)}\ell\right)
 \ll_{L_1,L_2}(\log z)^{-2}.
\]
Also $|r_d|\le2^{\omega(d)}$ up to a fixed multiplicative factor, so the
remainder in \eqref{eq:selberg-two-form} is
$O_{L_1,L_2}(z^2(\log z)^5)$.  Choose $z=R^{1/10}$.  A pair of prime
values both exceeding $z$ belongs to the sifted set; pairs for which one
prime value is at most $z$ contribute $O_{L_1,L_2}(z)$.  The displayed
bounds now give $O_{L_1,L_2}(R/(\log R)^2)$.
\end{proof}

\begin{proposition}[Sieve bound for the diagonal]\label{prop:D-bound}
For every fixed $h\ge1$ and every $y\ge2$,
\[
        D_{h,>y}^\varphi(x)\ll_h \frac{x}{(\log x)^2}
\]
for all sufficiently large $x$.
\end{proposition}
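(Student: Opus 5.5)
The plan is to drop the cutoff condition $P^+(\Gamma_jr)>y$ and bound the full union over $j\in\mathcal J_h$ of the sets of $n=j(A_jr+1)\le x$ for which both $A_jr+1$ and $B_jr+1$ are prime. Since $y$ no longer enters, the bound will be uniform in $y\ge2$. For a fixed $j$, the constraint $n\le x$ forces $r\le R_j:=x/(jA_j)$. So the $j$-th contribution is at most the number of $r\le R_j$ with $L_1(r)=A_jr+1$ and $L_2(r)=B_jr+1$ both prime. I will split the $j$-sum at a threshold $j\le t$ with $t=x^{1/3}$, say; the choice only needs $R_j$ to stay a power of $x$ and the tail to be small.

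For $j\le t$, apply Lemma~\ref{lem:two-linear-sieve} to $L_1(r)=A_jr+1$ and $L_2(r)=B_jr+1$. Both forms are primitive, because their constant term is $1$. They have positive coefficients, and their determinant is $A_j-B_j=h/d_j\ne0$. By Lemma~\ref{lem:J-finite}, $A_j$, $B_j$ and $A_j-B_j$ are all supported on the primes dividing $h$. Hence $2\Delta$ is supported on the primes dividing $2h$, a fixed finite set. The uniformity clause of Lemma~\ref{lem:two-linear-sieve} then gives a bound $\ll_h R_j/(\log R_j)^2$ with an implied constant independent of $j$. The case $R_j<3$ is handled trivially with a bound of $O(1)$.

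Since $A_j\ge1$ and $j\le x^{1/3}$, we need a lower bound on $R_j$. Using $A_j\le j+h$, we get $R_j\ge x/(j(j+h))\gg_h x^{1/3}$, so $\log R_j\gg\log x$. Summing over $j\le t$, the main part is $\ll_h (x/(\log x)^2)\sum_{j\in\mathcal J_h}1/(jA_j)\le (x/(\log x)^2)\,h/\varphi(h)$ by \eqref{eq:J-h-quantitative}. The $O(1)$ terms number at most $\#\{j\in\mathcal J_h:j\le t\}\ll_h(\log x)^{\omega(h)}$ by \eqref{eq:J-h-count}, which is negligible.

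For $j>t$, I will use the trivial bound on the number of $r$, namely $R_j\le x/j^2\cdot d_j$, obtained from $jA_j=j(j+h)/d_j\ge j^2/h$. Then $\sum_{j>t,\,j\in\mathcal J_h}hx/j^2\le hx\,t^{-1}\sum_{j\in\mathcal J_h}1/j\ll_h x^{2/3}$. The only real obstacle is making sure the sieve constant is uniform over the possibly infinite family indexed by $j$. This is exactly what the support statement of Lemma~\ref{lem:J-finite}, together with the uniformity clause of Lemma~\ref{lem:two-linear-sieve}, provides, so no finiteness of $\mathcal J_h$ is needed. Counting each integer once only decreases the total, which finishes the bound $D_{h,>y}^\varphi(x)\ll_h x/(\log x)^2$.
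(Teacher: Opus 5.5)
Your proposal is correct and is essentially the paper's argument. You drop the cutoff and coprimality conditions, obtain a $j$-uniform sieve constant from the support statement of Lemma~\ref{lem:J-finite} together with the uniformity clause of Lemma~\ref{lem:two-linear-sieve}, split the $j$-range at a power of $x$, and sum the small-$j$ range against the reciprocal mass \eqref{eq:J-h-quantitative}.

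The differences are cosmetic. The paper splits at $x^{1/2}$, uses the cruder bound $r\le x/j$, and handles the large-$j$ range with the count \eqref{eq:J-h-count}. Your sharper bound $r\le x/(jA_j)\le hx/j^2$ lets you dispose of that tail using the reciprocal mass alone.
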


\begin{proof}
By Lemma~\ref{lem:J-finite}, for every $j\in\mathcal J_h$ the numbers
$A_j$, $B_j$ and $A_j-B_j$ have all their prime factors among those of
$h$, so the leading coefficients and the determinant of the pair
$L_1(r)=A_jr+1$, $L_2(r)=B_jr+1$ are supported on the primes dividing $h$.
The uniformity clause of Lemma~\ref{lem:two-linear-sieve} therefore
supplies a constant $c_h$, depending only on $\rad(h)$, with
\begin{equation}\label{eq:diag-uniform-sieve}
 \#\{r\le R:A_jr+1\text{ and }B_jr+1\text{ are prime}\}
 \le c_h\frac R{(\log R)^2}
 \qquad(R\ge3,\ j\in\mathcal J_h).
\end{equation}
Since $D_{h,>y}^\varphi(x)$ counts integers rather than representations,
it is bounded above by the number of representing pairs $(j,r)$, and the
conditions $P^+(\Gamma_jr)>y$ and the two coprimality conditions can only
reduce that count.

Split the range of $j$ at $x^{1/2}$.  If $j\le x^{1/2}$, then
$j(A_jr+1)\le x$ and $A_j\ge1$ force $r\le R_j:=x/j$, and
$R_j\ge x^{1/2}\ge3$ for $x\ge9$, so $\log R_j\ge\tfrac12\log x$ and
\eqref{eq:diag-uniform-sieve} gives
\[
 N_j:=\#\{r\le R_j:A_jr+1\text{ and }B_jr+1\text{ are prime}\}
 \le\frac{4c_h}{(\log x)^2}\cdot\frac xj.
\]
Every representing pair with $j\le x^{1/2}$ is counted by $N_j$, so by
\eqref{eq:J-h-quantitative}
\[
 \sum_{\substack{j\in\mathcal J_h\\ j\le x^{1/2}}}N_j
 \le\frac{4c_h}{(\log x)^2}\,x\sum_{j\in\mathcal J_h}\frac1j
 \le\frac{4c_h}{\varphi(h)}\cdot\frac{hx}{(\log x)^2}.
\]
If $j>x^{1/2}$, then $r\le x/j\le x^{1/2}$, so each such $j$ contributes at
most $x^{1/2}$ pairs, while by \eqref{eq:J-h-count} there are at most
$(1+\log x/\log2)^{\omega(h)}$ indices $j\in\mathcal J_h$ with $j\le x$.
That range therefore contributes at most
$x^{1/2}(1+\log x/\log2)^{\omega(h)}$, which is at most $x/(\log x)^2$
once $x\ge x_0(\omega(h))$.  Adding the two ranges proves the proposition.
\end{proof}

\begin{proposition}[Transfer with the diagonal displayed]\label{prop:transfer-diagonal}
Let $2\le y\le x$ and let $h\le y$ be arbitrary.  Every integer counted by $D_{h,>y}^\varphi(x)$ is counted by $B_h(x;y)$.  Define
\[
 \mathcal E_h(x;y)=B_h(x;y)-D_{h,>y}^\varphi(x).
\]
Thus, as an equality of cardinalities,
\begin{equation}\label{eq:diagonal-cardinality-identity}
 0\le B_h(x;y)-D_{h,>y}^\varphi(x)
 =\mathcal E_h(x;y).
\end{equation}
Then
\begin{align}\label{eq:transfer-diagonal}
 \mathcal E_h(x;y)
 &\ll
 \frac{x}{y}+x^{1/2}
 +\frac{x\log x}{y}
 +\frac{x(\log y)^2}{y} \notag\\
 &\quad
 +\sum_{y<p\le (2xy)^{1/2}}\Psi_\tau\left(\frac{2xy}{p^2},p\right),
\end{align}
with an absolute implied constant.  Equivalently, by \eqref{eq:diagonal-cardinality-identity},
\begin{align}
 B_h(x;y)
 &= D_{h,>y}^\varphi(x)
 +O\left(
 \frac{x}{y}+x^{1/2}
 +\frac{x\log x}{y}
 +\frac{x(\log y)^2}{y} \right. \notag\\
 &\qquad\qquad\left.
 +\sum_{y<p\le (2xy)^{1/2}}\Psi_\tau\left(\frac{2xy}{p^2},p\right)\right).
\end{align}
\end{proposition}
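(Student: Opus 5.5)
The plan is to rerun the proof of Theorem~\ref{thm:transfer} for arbitrary $h\le y$. The only place where oddness of $h$ entered was Lemma~\ref{lem:small-injection}, used to rule out $m_2b=m_1a$. So the work splits in two: show that the diagonal integers are exactly what is lost when that exclusion is dropped, and check that every other step is parity-free.

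\textbf{Step 1: diagonal integers lie in $B_h(x;y)$.} Take $n$ counted by $D_{h,>y}^\varphi(x)$, represented by some $(j,r)$. By Proposition~\ref{prop:diag-param}, $\varphi(n)=\varphi(n+h)$. The primality and coprimality conditions give
\[
M=\varphi(n)=\varphi(j)\,(A_jr)=\Gamma_jr .
\]
Hence $P^+(M)=P^+(\Gamma_jr)>y$, so $n$ is counted by $B_h(x;y)$. This proves the inclusion and the nonnegativity in \eqref{eq:diagonal-cardinality-identity}. The identity itself is just the definition of $\mathcal E_h$.

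\textbf{Step 2: bound $\mathcal E_h(x;y)$.} Let $n$ be counted by $B_h(x;y)$ but not by $D_{h,>y}^\varphi(x)$, and put $p=P^+(M)>y\ge h$.
\begin{itemize}
\item \emph{Square sources.} If $p^2\mid n(n+h)$, the argument giving \eqref{eq:square-transfer} never used parity, so these contribute $\ll x/y+x^{1/2}$.
\item \emph{Skeleton.} Otherwise Lemma~\ref{lem:skeleton}, valid for all $h$, gives a skeleton $(m_1,m_2,k_1,k_2,p)$.
\item \emph{Diagonal skeletons are excluded.} Suppose the skeleton were diagonal. The converse half of Proposition~\ref{prop:diag-param} gives $j=m_1\in\mathcal J_h$, a common multiplier $r=cp$, primes $A_jr+1=q_1$ and $B_jr+1=q_2$, and $n=j(A_jr+1)$. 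The coprimality conditions hold because $q_i\nmid m_i$. Also $P^+(\Gamma_jr)=P^+(M)>y$. So $n$ would be counted by $D_{h,>y}^\varphi(x)$, a contradiction. Hence every remaining $n$ comes only from off-diagonal skeletons.
\item \emph{Small cofactors.} For an off-diagonal skeleton, $m_2b-m_1a\ne0$. The rest of the proof of Lemma~\ref{lem:small-injection} then shows that $(m_1,m_2)$ determines $n$, with no parity needed. So Corollary~\ref{cor:small} holds verbatim for off-diagonal skeletons and gives $\ll x\log x/y$ for $m_1m_2\le x/y$.
\item \emph{Large cofactors.} When $m_1m_2>x/y$, the CRT count and the sums $\Sigma_1$ and $\Sigma_2$ in \eqref{eq:sigma1-transfer}--\eqref{eq:sigma2-bound-transfer} used only $q_1\ne q_2$, $P^+(k_1k_2)\le p$ and $q_1q_2\le 2xy$. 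These hold for every $h\le y$. We may even overcount by including diagonal skeletons here, since this is an upper bound.
\end{itemize}
Summing the four contributions gives \eqref{eq:transfer-diagonal} with an absolute constant, and the displayed $B_h$ formula follows.

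\textbf{Main obstacle.} The delicate point is the converse matching in the third bullet of Step~2. The common multiplier $r=cp$ must satisfy $q_1=A_jr+1$, which requires the coprime pair $(a,b)$ from Lemma~\ref{lem:small-injection} to equal $(m_2/d,m_1/d)$. This holds because $\varphi(m_2)/\varphi(m_1)=m_2/m_1$ in the diagonal case, and reducing that fraction to lowest terms gives exactly $(m_2/d,m_1/d)=(A_j,B_j)$. One must also confirm that "counted once" in $D$ is compatible with this. It is, because we only need each excluded $n$ to have at least one diagonal representation.
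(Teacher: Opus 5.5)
Your proposal is correct and follows essentially the same route as the paper. It uses the same square-source, small-cofactor and large-cofactor split, with parity entering only through the exclusion of $m_2b=m_1a$, and it identifies diagonal skeletons with $D_{h,>y}^\varphi(x)$ via the converse half of Proposition~\ref{prop:diag-param}. Your Step~1, which checks the inclusion $D_{h,>y}^\varphi(x)\subseteq B_h(x;y)$ through $M=\Gamma_jr$, is a detail the paper leaves implicit.
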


\begin{proof}
Partition the nonsmooth solutions into the square-source, nonsquare
small-cofactor, and nonsquare large-cofactor classes used in the proof of
Theorem~\ref{thm:transfer}.  The square-source calculation
\eqref{eq:square-transfer} and the large-cofactor calculations
\eqref{eq:sigma1-bound-transfer}--\eqref{eq:sigma2-bound-transfer} do not
use the parity of $h$; they therefore give the corresponding terms in
\eqref{eq:transfer-diagonal}.  In the small-cofactor class the equation
\[
        cp(m_2b-m_1a)=h+m_1-m_2
\]
shows that either $m_2b\ne m_1a$, in which case $C=cp$ is determined by $(m_1,m_2)$ and the formulas
$q_1=aC+1$, $q_2=bC+1$, $n=m_1q_1$ determine at most one $n$, or $m_2b=m_1a$.  In the latter case $m_2=m_1+h$ and Lemma \ref{lem:support} gives
$\rad(m_1)=\rad(m_2)$.  The provider primes in the skeleton occur to
exponent one and are coprime to their cofactors, so Proposition
\ref{prop:diag-param} represents the solution in $D_{h,>y}^\varphi(x)$ exactly.  There are at most
$\sum_{m_1m_2\le x/y}1\ll(x/y)\log(2x)$ such cofactor pairs,
which gives the stated small-cofactor term.  Combining the three classes
proves the proposition.
\end{proof}

\section{Large-prime-rank amplification}\label{sec:rank-amplification}

For $y\ge2$ define the large-prime rank
\[
 \Omega_y(m)=\sum_{p>y}\nu_p(m).
\]
The purpose of this section is to prove the fixed-rank theorem and then
track the same mechanism while the rank grows.  In the fixed-rank
subsections we take $T\asymp_JG$, $y=e^T$, and $Y=y^J$.  The supplier
system definitions and counting lemmas are, however, stated for every
integer $J\ge1$ with their dependence on $J$ explicit; only the analytic
specializations declare $J$ fixed.

\begin{lemma}[A reciprocal friable tail]\label{lem:reciprocal-friable-tail}
Let $R\ge3$, $H\ge3$, and put $u=\log H/\log R$.  If $u\to\infty$ and
$u\le R^{1/2}$, then
\begin{equation}\label{eq:reciprocal-friable-tail}
 \sum_{\substack{m>H\\P^+(m)\le R}}\frac1m
 \le \exp\left\{-u\log u+
 O\left(\log_2(3R)+\frac{u}{\log u}\right)\right\}.
\end{equation}
In particular, if $u\log u\gg\log_2(3R)$, the right side is at most
$\exp\{-\tfrac12u\log u\}$.
\end{lemma}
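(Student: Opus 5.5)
We follow the Rankin argument already used for Lemma~\ref{lem:weighted-smooth}, with the exponent tilted in the opposite direction. Put
\[
 \delta=\frac{\log u}{\log R},\qquad s=1-\delta .
\]
Because $u\le R^{1/2}$ we have $0<\delta\le\frac12$, hence $\frac12\le s<1$. For $m>H$ the trivial inequality $(m/H)^{\delta}\ge1$ gives
\[
 \sum_{\substack{m>H\\P^+(m)\le R}}\frac1m
 \le H^{-\delta}\sum_{P^+(m)\le R}\frac1{m^{1-\delta}}
 =H^{-\delta}\prod_{\ell\le R}\bigl(1-\ell^{-s}\bigr)^{-1}.
\]
The identity $\delta\log H=u\log u$ turns the prefactor into $\exp\{-u\log u\}$. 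The estimate therefore reduces to bounding the logarithm of the Euler product by $O(\log_2(3R)+u/\log u)$.

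\emph{Bounding the Euler product.} Since $s\ge\frac12$, each term satisfies $-\log(1-\ell^{-s})\ll\ell^{-s}=\ell^{-1+\delta}$, with an absolute constant; the only delicate prime is $\ell=2$, where $2^{-s}\le 2^{-1/2}<1$ keeps the logarithm bounded. The resulting sum $\sum_{\ell\le R}\ell^{-1+\delta}$ is exactly the one estimated in the proof of Lemma~\ref{lem:weighted-smooth}. We reuse that estimate:
\begin{itemize}
 \item apply Chebyshev's bound and partial summation;
 \item split the integral at $\min(R,e^{1/\delta})$;
 \item the lower piece contributes $O(\log_2(3R))$;
 \item the upper piece contributes $O(R^\delta/(\delta\log R))=O(u/\log u)$, using $R^\delta=u$ and $\delta\log R=\log u$.
\end{itemize}
Combining these gives \eqref{eq:reciprocal-friable-tail} with absolute implied constants. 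The hypothesis $u\to\infty$ is needed only so that $\log u>0$ and $u/\log u$ is a genuinely lower-order term; we expect the argument to go through for any $u\ge u_0$.

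\emph{The final assertion.} Suppose $u\log u\ge C\log_2(3R)$ with $C$ large compared with the absolute constant in the $O$-term. Then the $\log_2(3R)$ contribution is at most $\frac14u\log u$. Since $u\to\infty$, the contribution $O(u/\log u)$ is also at most $\frac14u\log u$ for all large $u$. The exponent is therefore at most $-\frac12u\log u$.

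If the hypothesis is read literally with an unspecified implied constant, we interpret it, as in \eqref{eq:weighted-range}, as holding with a sufficiently large constant. We expect this constant bookkeeping to be the only point needing care. The analytic content is identical to Lemma~\ref{lem:weighted-smooth}, with the single exponent $2$ (from $\tau$) replaced by $1$.
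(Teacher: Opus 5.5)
Your proposal is correct and is essentially the paper's own argument: Rankin's trick with $\delta=\log u/\log R$, the identity $\delta\log H=u\log u$, and the prime-sum estimate from the proof of Lemma~\ref{lem:weighted-smooth}, with the Euler-product exponent reduced from two to one. Your treatment of the final assertion is also the intended one: you read $u\log u\gg\log_2(3R)$ with a sufficiently large constant and use $u\to\infty$ to absorb the $u/\log u$ term.
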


\begin{proof}
Put $\delta=\log u/\log R$.  Rankin's argument gives
\[
 \sum_{\substack{m>H\\P^+(m)\le R}}\frac1m
 \le H^{-\delta}\prod_{p\le R}(1-p^{-1+\delta})^{-1}.
\]
Using the prime-sum calculation in the proof of
Lemma~\ref{lem:weighted-smooth}, with the Euler-product exponent reduced
from two to one, gives
\[
 \sum_{p\le R}p^{-1+\delta}
 \ll \log_2(3R)+\frac{u}{\log u}.
\]
Since $\delta\log H=u\log u$, this proves the claim.
\end{proof}

For a finite multiset $\mathcal P$ of primes, write
$D(\mathcal P)$ for the product of its elements, with multiplicity.
A block below always means a nonempty submultiset.  For $j\ge1$, let
$\tau_j$ denote the ordered $j$-fold divisor function, so that
\[
 \sum_{m\ge1}\frac{\tau_j(m)}{m^s}=\zeta(s)^j
 \qquad(\Re s>1).
\]

\begin{lemma}[A growing-weight friable reciprocal tail]
\label{lem:growing-weight-tail}
Let $R,H\ge3$, let $j\ge1$, and put $v=\log H/\log R$.  Suppose
$v\ge3$ and $\log v\le\frac12\log R$.  Then
\begin{equation}\label{eq:growing-weight-tail}
 \sum_{\substack{m>H\\P^+(m)\le R}}\frac{\tau_j(m)}m
 \le
 \exp\left\{-v\log v+Cj\left(\log_2(3R)+\frac{v}{\log v}\right)\right\}
\end{equation}
for an absolute constant $C$.
\end{lemma}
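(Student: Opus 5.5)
We follow the Rankin argument of Lemma~\ref{lem:reciprocal-friable-tail}, now with the Dirichlet series $\zeta(s)^j$ restricted to $R$-friable integers. Put $\delta=\log v/\log R$. The hypothesis $\log v\le\frac12\log R$ gives $0<\delta\le\frac12$, and $v\ge3$ gives $\delta>0$. For $m>H$ we have $1\le (m/H)^{\delta}$. Hence
\[
 \sum_{\substack{m>H\\P^+(m)\le R}}\frac{\tau_j(m)}m
 \le H^{-\delta}\sum_{P^+(m)\le R}\frac{\tau_j(m)}{m^{1-\delta}}
 =H^{-\delta}\prod_{\ell\le R}\bigl(1-\ell^{-1+\delta}\bigr)^{-j}.
\]
Complete multiplicativity of the Euler factor gives the identity $\sum_{k\ge0}\tau_j(\ell^k)\ell^{-ks}=(1-\ell^{-s})^{-j}$, and the series converges because $\ell^{-1+\delta}\le\ell^{-1/2}<1$. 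Since $\delta\log H=v\log v$, the prefactor is exactly $e^{-v\log v}$.

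It remains to bound the logarithm of the product:
\[
 j\sum_{\ell\le R}-\log\bigl(1-\ell^{-1+\delta}\bigr).
\]
Because $1-\delta\ge\frac12$, each term satisfies $-\log(1-t)\le t+O(t^2)$ with $t=\ell^{-1+\delta}\le \ell^{-1/2}$. This needs a little care at $\ell=2$, where $t\le 2^{-1/2}$, but there $-\log(1-t)\ll t$ with an absolute constant. So the whole sum is $\ll\sum_{\ell\le R}\ell^{-1+\delta}$.

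The proof of Lemma~\ref{lem:weighted-smooth} estimates this prime sum. We use Chebyshev and partial summation, splitting the integral at $\min(R,e^{1/\delta})$, and the piece above the split is controlled by $R^\delta/(\delta\log R)$. That computation yields
\[
 \sum_{\ell\le R}\ell^{-1+\delta}\ll\log_2(3R)+\frac{R^\delta}{\delta\log R}
 =\log_2(3R)+\frac{v}{\log v}.
\]
Here $v$ plays the role of $u$, since $R^\delta=v$ and $\delta\log R=\log v$. Multiplying by $j$ gives the exponent $Cj(\log_2(3R)+v/\log v)$ with $C$ absolute.

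The only point needing attention is that the implied constant is genuinely independent of $j$. This holds because $j$ enters only as the exponent of the Euler product, hence only linearly in the logarithm, and every other constant comes from the $j$-free prime-sum estimate. We also check that the earlier computation did not use $u\to\infty$ beyond $v\ge3$, so that $\log v\ge1$ and $v/\log v$ is meaningful. The bound on the integral over $[\min(R,e^{1/\delta}),R]$ (via $w=\delta\log t$, giving $\int e^{w}/w\,dw\ll e^{W}/W$ for $W=\delta\log R=\log v\ge1$) is valid for all $v\ge3$ with an absolute constant. Thus no step is a real obstacle: the lemma is the $\tau_j$-weighted analogue of Lemma~\ref{lem:reciprocal-friable-tail}, and combining the displays gives \eqref{eq:growing-weight-tail}.
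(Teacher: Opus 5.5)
Your proposal is correct and follows the paper's proof: Rankin's trick with $\delta=\log v/\log R$, the Euler product $\prod_{\ell\le R}(1-\ell^{-1+\delta})^{-j}$, and the $j$-free prime-sum estimate from the proof of Lemma~\ref{lem:weighted-smooth}, with $\delta\log H=v\log v$ supplying the main term. One small wording slip: the local identity $\sum_{k\ge0}\tau_j(\ell^k)\ell^{-ks}=(1-\ell^{-s})^{-j}$ comes from the Euler product of $\zeta(s)^j$ and the multiplicativity of $\tau_j$ (which is not completely multiplicative), but this does not affect the argument.
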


\begin{proof}
Set $\delta=\log v/\log R$, so $0<\delta\le1/2$.  Rankin's method gives
\[
 \sum_{\substack{m>H\\P^+(m)\le R}}\frac{\tau_j(m)}m
 \le H^{-\delta}\prod_{p\le R}(1-p^{-1+\delta})^{-j}.
\]
The prime-sum estimate used in Lemma~\ref{lem:weighted-smooth} yields
\[
 \sum_{p\le R}-\log(1-p^{-1+\delta})
 \le C\left(\log_2(3R)+\frac{R^\delta}{\delta\log R}\right)
 =C\left(\log_2(3R)+\frac{v}{\log v}\right).
\]
Since $\delta\log H=v\log v$, the result follows.
\end{proof}

\begin{lemma}[Fixed-rank scale bookkeeping]\label{lem:rank-scales}
Let $J$ be fixed, $T\asymp_JG$, $y=e^T$, $Y=y^J$, and
$\epsilon=(\log_2 x)^{-1/2}$.  Then
\begin{gather*}
 V^{O_J(1)}(\log_2 x)^{O_J(1)}=e^{o_J(V)},\\
 Y^{J+1}<x^{\epsilon/(2J)},\\
 \frac{\log(x^{\epsilon/J})}{\log Y}
  \log\!\left(\frac{\log(x^{\epsilon/J})}{\log Y}\right)
  =\omega_J(G),\\
 \frac{\log(x^{1/(2J)})}{\log Y}
  \log\!\left(\frac{\log(x^{1/(2J)})}{\log Y}\right)
  =\omega_J(G).
\end{gather*}
\end{lemma}

\begin{proof}
The first assertion follows from $\log V,\log_3 x=o(V)$.  Since
$L/T\asymp_JV$ and $\epsilon V\to\infty$, the second follows from
$J(J+1)T=o(\epsilon L/J)$.  The two remaining expressions are respectively
of orders $\epsilon V\log V$ and $V\log V$ up to constants depending on
$J$; both dominate $G$ by Lemma~\ref{lem:scales} and the identity
$G/V=\sA\asymp\log_3 x$.
\end{proof}

\begin{definition}[Supplier systems]
\label{def:supplier-system}
Put $I=\{1,\ldots,J\}$, and let
\[
 \mathcal P=(p_i)_{i\in I},\qquad p_i\in(y,Y],
\]
be a labelled $J$-tuple of primes.  For $B\subseteq I$ put
\[
 D(B)=\prod_{i\in B}p_i,
 \qquad D(\mathcal P)=D(I).
\]
Let $B_J$ denote the $J$th Bell number, the number of set partitions of
$I$; in particular $B_J\le J^J$.

A $Y$-admissible supplier system $\mathfrak s$ for $\mathcal P$ consists
of a set partition $\pi$ of $I$ and, for every $B\in\pi$, a prime base
$q_B$, distinct for different blocks, together with exactly one of the
following descriptors.
\begin{enumerate}[label=\textup{(L)},leftmargin=2.2em]
\item $D(B)\mid q_B-1$, $P^+(q_B-1)\le Y$, $q_B\le2x$, and the selected
source divisor is $\delta_B=q_B$.
\item[\textup{(P)}] Put
\[
 E_B=\{i\in B:p_i=q_B\},\qquad e_B=|E_B|.
\]
Here $e_B\ge1$, $D(B\setminus E_B)\mid q_B-1$,
$P^+(q_B-1)\le Y$, $q_B\le Y$, and the selected source divisor is
$\delta_B=q_B^{e_B+1}$.
\end{enumerate}
The divisor selected by the system is
\[
 d(\mathfrak s)=\prod_{B\in\pi}\delta_B.
\]
Define
\begin{equation}\label{eq:formal-weight-definition}
 w_{\mathcal P}(d)=\#\{\mathfrak s:d(\mathfrak s)=d\},\qquad
 F_{\mathcal P}(N)=\sum_{d\mid N}w_{\mathcal P}(d),\qquad
 \Lambda(\mathcal P)=\sum_d\frac{w_{\mathcal P}(d)}d.
\end{equation}
For $H\ge1$ write
$\Lambda_{>H}(\mathcal P)=\sum_{d>H}w_{\mathcal P}(d)/d$.
\end{definition}

The definition is deliberately independent of an ambient integer.  A
\emph{realization} of $\mathfrak s$ in $N$ is the injective block map
\[
 \rho_{\mathfrak s}:\pi\longrightarrow
 \{q^a:q^a\Vert N\},\qquad B\longmapsto q_B^{a_B},
\]
for which $\delta_B\mid q_B^{a_B}$.  It induces the labelled occurrence
assignment
\[
 \sigma_{\mathfrak s}:I\longrightarrow
 \{q^a:q^a\Vert N\},\qquad \sigma_{\mathfrak s}(i)=\rho_{\mathfrak s}(B)
 \quad(i\in B).
\]
Distinct blocks use distinct local prime-power factors.  Because the bases
$q_B$ are distinct, a realization exists exactly when
$d(\mathfrak s)\mid N$.  Descriptor \textup{(L)} then supplies all labels
in $B$ through $q_B-1$, whereas \textup{(P)} supplies the labels in
$E_B$ through $q_B^{a_B-1}$ and the remaining labels through $q_B-1$.

\begin{lemma}[Reciprocal mass of one source block]
\label{lem:source-block-mass}
Let $B\subseteq I$ be nonempty and put $D_B=D(B)$.  For every integer
$J\ge1$, every $y\ge3$, and every $Y\le2x$, the reciprocal mass of all
admissible descriptors for $B$ is at most
\begin{equation}\label{eq:block-mass}
 \frac{\mathcal L_J(x,y)}{D_B},\qquad
 \mathcal L_J(x,y)=
 C\left((1+\log_2(3x))e^{2J/y}+J\right),
\end{equation}
where $C$ is absolute.  For a fixed exact local factor $q^a$ and a fixed
block $B$, at most one of the two descriptor types is possible.
\end{lemma}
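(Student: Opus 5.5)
We bound the two descriptor types separately and add them; the constant $C$ absorbs the two contributions.

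\emph{Type (L).} Here $q_B$ is a prime with $q_B\equiv1\pmod{D_B}$ and $q_B\le2x$, and $\delta_B=q_B$. The mass is therefore at most
\[
 \sum_{\substack{q\le2x\\ q\equiv1\ (\mathrm{mod}\ D_B)}}\frac1q
 \ll\frac{1+\log_2(6x)}{\varphi(D_B)},
\]
by Lemma~\ref{lem:reciprocal-BT} with $D=D_B$ and $X=2x$. It remains to compare $\varphi(D_B)$ with $D_B$. Since $D_B$ is a product of at most $J$ primes $p_i>y$, possibly repeated, we have
\[
 \frac{D_B}{\varphi(D_B)}=\prod_{p\mid D_B}\Bigl(1-\frac1p\Bigr)^{-1}\le(1-1/y)^{-J}\le e^{2J/y}
\]
for $y\ge3$. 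This gives the term $C(1+\log_2(3x))e^{2J/y}/D_B$, after absorbing $\log_2(6x)\ll1+\log_2(3x)$.

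\emph{Type (P).} The base $q_B$ equals $p_i$ for every $i\in E_B$. Since $E_B$ is nonempty, $q_B$ is one of the primes $\{p_i:i\in B\}$. Fix a candidate value $q$ among these; there are at most $|B|\le J$ of them. Then $E_B=\{i\in B:p_i=q\}$ and $e_B$ are determined, and $\delta_B=q^{e_B+1}$. The contribution is thus exactly $1/q^{e_B+1}$, with no further sum over $q_B$.

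We compare this with $1/D_B$. We have $D_B=q^{e_B}D(B\setminus E_B)$. The admissibility condition $D(B\setminus E_B)\mid q-1$ gives $D(B\setminus E_B)\le q-1<q$. Hence
\[
 q^{e_B+1}\ge q^{e_B}D(B\setminus E_B)=D_B,
\]
so each admissible candidate contributes at most $1/D_B$. Summing over the at most $J$ candidates gives $J/D_B$. Adding the two types yields \eqref{eq:block-mass}.

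\emph{Exclusivity.} Fix an exact local factor $q^a\Vert N$ and a block $B$, so that $q_B=q$ is fixed. Type (L) requires $D_B\mid q-1$. Since every $p_i$ with $i\in B$ divides $q-1$, none of them equals $q$, so $E_B=\varnothing$. Type (P) instead requires $e_B\ge1$, that is, some $p_i=q$. These two conditions are incompatible, so at most one descriptor type is possible.

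The main point of care is the (P) comparison $q^{e_B+1}\ge D_B$. It uses the divisibility $D(B\setminus E_B)\mid q-1$, and this is exactly where the extra power in $\delta_B=q^{e_B+1}$ is needed. The remaining steps are routine bookkeeping of constants.
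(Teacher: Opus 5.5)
Your proposal is correct and follows the paper's proof essentially step for step. For type (L) you use Lemma~\ref{lem:reciprocal-BT} together with $D_B/\varphi(D_B)\le e^{2J/y}$; for type (P) you use the inequality $q^{e_B+1}>D_B$, which comes from $D(B\setminus E_B)\mid q-1$, and you note there are at most $J$ choices of base. For exclusivity you use $q\nmid q-1$, which is the paper's argument stated directly from the definition of $E_B$ rather than in terms of a realization.
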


\begin{proof}
For descriptors of type \textup{(L)}, Lemma~\ref{lem:reciprocal-BT}
gives
\[
 \sum_{\substack{q\le2x\\q\equiv1\pmod {D_B}\\q\ \mathrm{prime}}}\frac1q
 \ll \frac{1+\log_2(3x)}{\varphi(D_B)}.
\]
Every prime divisor of $D_B$ exceeds $y$.  Since there are at most $J$
distinct such primes and
$-\log(1-p^{-1})\le2/p$ for $p\ge3$,
\[
 \frac{D_B}{\varphi(D_B)}
 =\prod_{p\mid D_B}(1-p^{-1})^{-1}
 \le \exp(2J/y).
\]
This proves the linear part of \eqref{eq:block-mass}.

For a type-\textup{(P)} descriptor, the base $q$ must be one of the
prime values occurring among the labels in $B$.  Put
$e=\#\{i\in B:p_i=q\}$.  The divisibility condition gives
\[
 D_B=q^eD(B\setminus E_B)\le q^e(q-1)<q^{e+1},
\]
and hence $q^{-e-1}<D_B^{-1}$.  There are at most $J$ possible base
values $q$, so the total power-source mass is at most $J/D_B$.

Finally, for a fixed exact factor $q^a$, a block containing no demand
$p_i=q$ can only be linear.  If it contains such a demand, then
$q\nmid q-1$, so every label in the block with value $q$ must be supplied
by $q^{a-1}$; consequently $E_B$ is forced to be the set displayed in
Definition~\ref{def:supplier-system}.  Thus the local descriptor is
unique once $B$ and $q^a$ are fixed.
\end{proof}

\begin{lemma}[Low-rank compression]\label{lem:low-rank-compression}
Uniformly for $x\le X\le2x$,
\begin{equation}\label{eq:low-rank-compression}
 \#\{N\le X:P^+(\varphi(N))\le Y,
          \ \Omega_y(\varphi(N))<J\}
 \le X\exp\{-\mathcal S_x(T)+o_J(V)\},
\end{equation}
where
\[
 \mathcal S_x(T)=\frac{L}{T}
 \left(\log_2\frac LT+\log_3\frac LT\right).
\]
\end{lemma}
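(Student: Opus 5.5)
The plan is to peel off the few prime-power factors of $N$ that carry the large primes of $\varphi(N)$, and apply BFPS at the cutoff $y$ to what remains. Let $N\le X$ be counted. Then $\varphi(N)$ is $Y$-smooth and $\Omega_y(\varphi(N))\le J-1$. Write $N=\prod q^{a}$ over exact prime-power factors $q^a\Vert N$. Each prime $p>y$ dividing $\varphi(N)=\prod q^{a-1}(q-1)$ comes from some factor $q^a$: either $p\mid q-1$, or $q=p$ with $a\ge2$. Call $q^a$ \emph{loaded} if $\varphi(q^a)$ has a prime factor exceeding $y$. Each loaded factor contributes at least one to $\Omega_y(\varphi(N))$, so there are at most $J-1$ of them. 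Let $M$ be the product of the loaded factors and put $N'=N/M$. Then $(M,N')=1$, $P^+(\varphi(N'))\le y$, and $N\mapsto(M,N')$ is injective. Hence the count in \eqref{eq:low-rank-compression} is at most
\[
 \sum_{M}\Phi(X/M,y),
\]
where $M$ runs over products of at most $J-1$ prime powers $q^a$ with $P^+(q^{a-1}(q-1))\le Y$ and $\Omega_y(q^{a-1}(q-1))\le J-1$. The term $M=1$ is included.

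Next I would split at $M\le x^{\epsilon}$, with $\epsilon=(\log_2x)^{-1/2}=\delta_x$ as in Lemma~\ref{lem:rank-scales}.

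\emph{Small $M$.} Here $X/M\ge x^{1-\delta_x}$, so Lemma~\ref{lem:bfps-uniform}(i) applies with $T\asymp_J G$. It gives $\Phi(X/M,y)\le (X/M)\exp\{-G^2/T+o_J(V)\}$. For $T\asymp G$ one has $\log_2(L/T)+\log_3(L/T)=\sA+o(1)$, exactly as in the proof of Lemma~\ref{lem:bfps-saddle}, and $L/T\ll_J V$. Therefore $G^2/T=L\sA/T=\mathcal S_x(T)+o_J(V)$. The remaining sum over $M$ is bounded by
\[
 \sum_M\frac1M\le\Bigl(1+\sum_{q^a\le 2x}q^{-a}\Bigr)^{J-1}\ll(C\log_2x)^{J-1},
\]
which is $e^{o_J(V)}$ by the first line of Lemma~\ref{lem:rank-scales}.

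\emph{Large $M$.} This is the step I expect to be the main obstacle. Here I would not use BFPS at all; I would use only the trivial bound $\Phi(X/M,y)\le X/M$. I would then aim to show that
\[
 \sum_{M>x^{\epsilon}}\frac1M\le\exp\{-\omega_J(G)\}.
\]
Since $M$ has at most $J-1$ factors, one of them satisfies $q^a>x^{\epsilon/J}$.

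First suppose $a\ge2$ for that factor. Then $q\le Y$, because $q\mid\varphi(N)$. The reciprocal mass of such powers above $x^{\epsilon/J}$ is negligible, since $\sum_{q\le Y}\sum_{q^a>H}q^{-a}\ll Y/H$ and $Y^{J+1}<x^{\epsilon/(2J)}$ by Lemma~\ref{lem:rank-scales}.

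Otherwise $q>x^{\epsilon/J}$ is prime. Write $q-1=sb$, where $s$ is $y$-smooth and $b$ is a product of at most $J-1$ primes in $(y,Y]$. Then $b\le Y^{J-1}$, so $s>x^{\epsilon/J}/Y^{J}\ge x^{\epsilon/(2J)}$. Bounding $1/q\le 1/(sb)$ and summing over $b$ costs at most $(C\log_2 x)^{J}$. Lemma~\ref{lem:reciprocal-friable-tail} applied to $s$, with $R=y$ and $\log H\asymp_J\epsilon L$, gives a saving
\[
 \exp\{-\tfrac12 u\log u\},\qquad u\asymp_J\epsilon V.
\]
This exponent is $\omega_J(G)$ by the third line of Lemma~\ref{lem:rank-scales}, after adjusting for the cutoff $y$ versus $Y$; the lemma's hypotheses $u\to\infty$ and $u\le y^{1/2}$ hold in this range. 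Multiplying by the $e^{o_J(V)}$ mass of the other at most $J-2$ factors, the large-$M$ part is $\ll X e^{-\omega_J(G)}$. Since $\mathcal S_x(T)\asymp_J G$, this is absorbed into $X\exp\{-\mathcal S_x(T)+o_J(V)\}$.

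The care needed in this last step is the uniformity in $x\le X\le2x$, and checking that the exceptional prime-square case $q=p$ fits within the same mass bound.
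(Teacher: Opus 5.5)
Your proposal is correct and follows essentially the paper's proof. It uses the same peeling $N=dm$ of the local factors whose totients carry primes above $y$, the same split at $d=x^{\epsilon}$, Lemma~\ref{lem:bfps-uniform}(i) for small $d$, and a Rankin friable tail for the single large prime $q$ when $d$ is large. The differences are minor: your plain Mertens bound $\sum 1/d\le(1+C\log_2 x)^{J-1}=e^{o_J(V)}$ already suffices where the paper goes through Brun--Titchmarsh block masses, and you split $q-1=sb$ and sift with $R=y$ instead of applying the tail directly with $R=Y$.
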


\begin{proof}
Set $\epsilon=(\log_2 x)^{-1/2}$.  For
$N=\prod q^a$, let $d$ be the product of those full prime powers $q^a\Vert N$
for which $\varphi(q^a)$ has a prime factor exceeding $y$.  There are fewer
than $J$ such local factors.  Writing $N=dm$, multiplicativity gives
\[
 P^+(\varphi(m))\le y.
\]

We first bound the reciprocal mass of the possible local factors in $d$.
For $a=1$, classify $q-1$ by the complete nonempty multiset
$\mathcal B$ of its prime factors in $(y,Y]$, counted with multiplicity.
Its size is below $J$.  For $1\le s<J$,
\[
 \sum_{\substack{\mathcal B:\,|\mathcal B|=s}}
 \frac1{D(\mathcal B)}
 \le \left(\sum_{y<p\le Y}\frac1p\right)^s
 \ll_J1
\]
by Mertens' theorem.  Lemma~\ref{lem:source-block-mass} contributes one
factor $O(1+\log_2(3x))$ for the selected linear local factor.  Summing
over the finitely many block sizes therefore gives total reciprocal mass
$\exp\{O_J(\log_3 x)\}=e^{o_J(V)}$ for one linear local factor.  If
$a\ge2$, then $q>y$, $q\le Y$, and $a\le J$, because
$q^{a-1}\mid\varphi(q^a)$ already supplies $a-1$ large-prime occurrences.
Hence
\[
 \sum_{2\le a\le J}\sum_{y<q\le Y}\frac1{q^a}=O_J(1).
\]
Taking products of fewer than $J$ local factors preserves the bound
$e^{o_J(V)}$.  It follows that
\begin{equation}\label{eq:low-rank-d-mass}
 \sum_d\frac1d\le e^{o_J(V)},
\end{equation}
where the sum ranges over all possible products of fewer than $J$ local
factors.

If $d\le x^\epsilon$, then $X/d\ge x^{1-\epsilon}$.  Lemma~\ref{lem:bfps-uniform}\textup{(i)} gives
\[
 \Phi(X/d,y)\le \frac{X}{d}
 \exp\{-\mathcal S_x(T)+o_J(V)\},
\]
uniformly in this range; the displayed $o_J(V)$ is the error function in
\eqref{eq:bfps-fixed-uniform} together with the explicit replacement of
$\log(X/d)$ by $L$.  Summing with \eqref{eq:low-rank-d-mass} handles all
such $d$.

If $d>x^\epsilon$, one of its fewer than $J$ local factors exceeds
$x^{\epsilon/J}$.  A factor with exponent $a\ge2$ is at most
$Y^J=e^{J^2T}<x^{\epsilon/J}$ for large $x$, by
Lemma~\ref{lem:rank-scales}.  Hence the large factor is a
prime $q$ with $P^+(q-1)\le Y$.  Dropping primality and writing $q-1=r$,
Lemma~\ref{lem:reciprocal-friable-tail} gives
\[
 \sum_{\substack{q>x^{\epsilon/J}\\P^+(q-1)\le Y}}\frac1q
 \le 2\sum_{\substack{r>x^{\epsilon/J}/2\\P^+(r)\le Y}}\frac1r
 \le \exp\{-\omega_J(G)\}.
\]
Indeed the relevant smoothness parameter is
$\asymp_J\epsilon L/T$, and Lemma~\ref{lem:rank-scales} shows that its
Rankin exponent is $\omega_J(G)$.  The other local factors have reciprocal
mass $e^{o_J(V)}$.  Counting multiples of $d$ therefore gives a total
$O(Xe^{-\omega_J(G)})$, which is negligible.  This proves the lemma.
\end{proof}

\begin{lemma}[Occurrence assignment and coverage]
\label{lem:supplier-coverage}
Let $N\le2x$, suppose $P^+(\varphi(N))\le Y$, and let
$\mathcal P=(p_i)_{i\in I}$ be a labelled $J$-tuple in $(y,Y]$ satisfying
$D(\mathcal P)\mid\varphi(N)$.  Then a $Y$-admissible supplier system for
$\mathcal P$ has selected divisor dividing $N$.  In particular,
\begin{equation}\label{eq:supplier-coverage}
 D(\mathcal P)\mid\varphi(N)\quad\Longrightarrow\quad
 F_{\mathcal P}(N)\ge1.
\end{equation}
Every prime factor of every selected divisor exceeds $y$.
\end{lemma}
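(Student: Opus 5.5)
The statement says that there \emph{exists} a $Y$-admissible supplier system whose selected divisor divides $N$. We will build such a system directly from the factorisation $N=\prod q^{a}$ and the formula $\varphi(N)=\prod_{q^a\Vert N}q^{a-1}(q-1)$.

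\emph{Assigning the labels.} Since $D(\mathcal P)\mid\varphi(N)$ and $\varphi$ is multiplicative, we distribute the labelled occurrences among the local factors. For each prime value $p$ occurring among the $p_i$, let $m_p$ be the number of labels with $p_i=p$. We know $p^{m_p}$ divides $\prod_{q^a\Vert N}q^{a-1}(q-1)$, and $p^{m_p}$ equals the sum over local factors of $\nu_p(q^{a-1}(q-1))$. We choose an assignment $\sigma:I\to\{q^a:q^a\Vert N\}$ such that each local factor $q^a$ receives at most $\nu_p(q^{a-1}(q-1))$ labels of value $p$. This is a greedy filling, and it is possible because the capacities add up to at least $m_p$.

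\emph{Building the system.} Let $\pi$ be the partition of $I$ into the nonempty fibres of $\sigma$, and let $q_B$ be the base prime of $\sigma(B)$. These bases are distinct because distinct local factors have distinct primes. For a block $B$ with $\sigma(B)=q^a$ we pick the descriptor as follows.
\begin{itemize}
\item If no label in $B$ has value $q$, all of $B$ is supplied by $q-1$. Since the primes in $D(B)$ are counted with capacity, $D(B)\mid q-1$. We take type (L) with $\delta_B=q\mid q^a$. Here $q\le N\le 2x$, and $P^+(q-1)\le P^+(\varphi(N))\le Y$.
\item Otherwise $E_B=\{i\in B:p_i=q\}$ has $e_B\ge1$. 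These labels can only be supplied by $q^{a-1}$, because $q\nmid q-1$. Hence $e_B\le a-1$, so $\delta_B=q^{e_B+1}\mid q^a$. The remaining labels give $D(B\setminus E_B)\mid q-1$. Also $q=p_i\le Y$ and $P^+(q-1)\le Y$, so this is an admissible type (P) descriptor.
\end{itemize}

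\emph{Conclusion.} The divisors $\delta_B$ are coprime across blocks, so $d(\mathfrak s)=\prod_B\delta_B$ divides $N$. This gives $F_{\mathcal P}(N)\ge w_{\mathcal P}(d(\mathfrak s))\ge1$. Every selected divisor is a product of powers of bases $q_B$. In case (P) we have $q_B=p_i>y$. In case (L), $q_B-1$ is divisible by some $p_i>y$, so $q_B>y$. The last claim is stated for every admissible system, and this same argument covers it, since both descriptor types force $q_B>y$.

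The step needing the most care is the multiplicity bookkeeping in the assignment. We must make sure that repeated label values and the split between the $q^{a-1}$ and $q-1$ parts yield exactly the divisibilities demanded by (L) and (P). Counting $p$-adic valuations local factor by local factor handles this.
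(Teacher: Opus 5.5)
Your proposal is correct and is essentially the paper's argument: you assign the labels to the exact local factors $q^a\Vert N$ within their $p$-adic capacities, take the fibres as blocks, and let the descriptor be forced by whether a block contains a label of value $q$. The paper additionally fixes a canonical slot order, which matters only for the later encoding lemma, and one counting sentence of yours should read $m_p\le\nu_p(\varphi(N))=\sum_{q^a\Vert N}\nu_p\bigl(q^{a-1}(q-1)\bigr)$ rather than ``$p^{m_p}$ equals the sum,'' as your next sentence already presupposes.
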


\begin{proof}
Write
\[
 N=\prod_{q^a\Vert N}q^a,
 \qquad
 \varphi(N)=\prod_{q^a\Vert N}q^{a-1}(q-1).
\]
For each prime $p$, list all occurrences of $p$ in the displayed product,
first by increasing base $q$, then with the occurrences in $q^{a-1}$
before those in $q-1$, and finally by multiplicity.  Assign the labels
$i$ with $p_i=p$, in increasing label order, to the first available
$p$-slots.  The divisibility assumption guarantees that this assignment
is possible.

Partition $I$ according to the exact local factor $q^a$ receiving a
label.  Let $B$ be one fibre.  If no label is assigned to $q^{a-1}$, then
$D(B)\mid q-1$ and descriptor \textup{(L)} applies.  Otherwise, precisely
the labels in
$E_B=\{i\in B:p_i=q\}$ are assigned to $q^{a-1}$: no occurrence of $q$
can occur in $q-1$.  Thus $e_B=|E_B|\le a-1$,
$D(B\setminus E_B)\mid q-1$, and
$q^{e_B+1}\mid q^a$, so descriptor \textup{(P)} applies.  Distinct fibres
come from distinct exact prime-power factors and therefore have distinct
bases.  Since each local totient divides the product defining
$\varphi(N)$, all required smoothness conditions hold.  Multiplying the
selected divisors gives a divisor of $N$, proving
\eqref{eq:supplier-coverage}.

For type \textup{(P)}, the base is one of the demanded primes and exceeds
$y$.  For type \textup{(L)}, $q_B>D(B)\ge p_i>y$ for every $i\in B$.
This proves the support assertion.
\end{proof}

\begin{remark}[A repeated-prime power-source example]
\label{rem:supplier-example}
Take labelled demands
$\mathcal P=(7_1,7_2,3_3,5_4)$.  The block
$B_1=\{1,2,3\}$ may use the type-\textup{(P)} source $7^3$, because
\[
 \varphi(7^3)=7^2\cdot6
\]
supplies the two labelled copies of $7$ through $7^2$ and the labelled
copy of $3$ through $6$.  Here $E_{B_1}=\{1,2\}$ is forced, not chosen.
The block $B_2=\{4\}$ may use the type-\textup{(L)} source $11$, since
$5\mid10$.  The resulting selected divisor is $7^3\cdot11$.  This
example also shows why repeated demands must remain labelled even though
the numerical prime values may coincide.
\end{remark}

\begin{lemma}[Block reciprocal mass and pointwise multiplicity]
\label{lem:multiset-supplier-mass}
For every integer $J\ge1$, every $y\ge3$, every $Y\le2x$, and every
labelled $J$-tuple $\mathcal P$ in $(y,Y]$, one has
\begin{equation}\label{eq:multiset-mass}
 \Lambda(\mathcal P)
 \le \frac{B_J\mathcal L_J(x,y)^J}{D(\mathcal P)}.
\end{equation}
If
\[
 W_y(N)=\#\{q^a\Vert N:P^+(\varphi(q^a))>y\},
\]
then, for $N\le2x$,
\begin{equation}\label{eq:formal-pointwise}
 F_{\mathcal P}(N)
 \le B_J\max(1,W_y(N))^J,
 \qquad
 W_y(N)\le\frac{\log(2x)}{T}.
\end{equation}
\end{lemma}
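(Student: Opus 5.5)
The mass bound and the pointwise bound are independent, and I will treat them in turn.

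For \eqref{eq:multiset-mass}, I expand $\Lambda(\mathcal P)=\sum_{\mathfrak s}1/d(\mathfrak s)$ as a sum over systems. A system is determined by a set partition $\pi$ of $I$, of which there are $B_J$, and then, for each block $B\in\pi$, a base $q_B$ with a descriptor. Since $d(\mathfrak s)=\prod_{B\in\pi}\delta_B$, I drop the requirement that the bases be distinct; this can only enlarge the sum. For fixed $\pi$ the sum then factors as
\[
 \prod_{B\in\pi}\Bigl(\sum_{\text{descriptors for }B}\delta_B^{-1}\Bigr).
\]
Lemma~\ref{lem:source-block-mass} bounds each factor by $\mathcal L_J(x,y)/D(B)$. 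Using $\prod_{B\in\pi}D(B)=D(\mathcal P)$ and $|\pi|\le J$, with $\mathcal L_J\ge1$ (after enlarging $C$ if necessary), the product is at most $\mathcal L_J(x,y)^J/D(\mathcal P)$. Summing over the $B_J$ partitions gives \eqref{eq:multiset-mass}.

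For the pointwise bound I write $F_{\mathcal P}(N)=\#\{\mathfrak s:d(\mathfrak s)\mid N\}$ and count systems through their realizations. Because the bases are distinct, each system with $d(\mathfrak s)\mid N$ determines an injective map $\rho_{\mathfrak s}$ from $\pi$ into the exact prime-power factors $q^a\Vert N$. The target $\rho_{\mathfrak s}(B)=q_B^{a_B}$ must satisfy $P^+(\varphi(q^a))>y$. For (L), some $p_i>y$ divides $q-1$. For (P), $q=p_i>y$ and $q\mid\varphi(q^a)$ since $a\ge e_B+1\ge2$. So every block lands in one of the $W_y(N)$ factors counted by $W_y$.

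Given $\pi$ and $\rho$, the base of each block is $q$, and by the last clause of Lemma~\ref{lem:source-block-mass} the descriptor type and $E_B$ are forced. Hence the system is determined by $(\pi,\rho)$. The number of choices of $\rho$ is at most $W_y(N)^{|\pi|}\le\max(1,W_y(N))^J$, and summing over partitions gives $F_{\mathcal P}(N)\le B_J\max(1,W_y(N))^J$.

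For the bound $W_y(N)\le\log(2x)/T$, note that each counted factor $q^a$ exceeds $y$. In case $a\ge2$ this holds because $P^+(\varphi(q^a))>y$ forces $q>y$ or $q-1$ to have a prime $>y$. In case $a=1$, $q-1$ has a prime factor $>y$. The counted factors are pairwise coprime divisors of $N$, so $y^{W_y(N)}\le N\le2x$. Taking logarithms gives $W_y(N)\le\log(2x)/\log y=\log(2x)/T$.

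The main obstacle is checking that $(\pi,\rho)$ really determines $\mathfrak s$ in the injectivity step. This rests on the uniqueness clause of Lemma~\ref{lem:source-block-mass}: once the block and the exact local factor $q^a$ are fixed, $E_B=\{i\in B:p_i=q\}$ and therefore $\delta_B$ are fixed.
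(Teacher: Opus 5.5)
Your proposal is correct and follows the paper's proof essentially step for step. You factor the reciprocal mass over the blocks of a fixed partition after dropping base-distinctness, and you count realizations as injective block maps into the $W_y(N)$ exact factors using the uniqueness clause of Lemma~\ref{lem:source-block-mass}. You then bound $W_y(N)$ through $y^{W_y(N)}\le 2x$. The only cosmetic differences are that you make the normalization $\mathcal L_J\ge1$ explicit, and you compare the coprime factors $q^a>y$ with $N$ where the paper compares their local totients with $\varphi(N)$.
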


\begin{proof}
Fix a partition $\pi$.  Dropping the distinct-base restriction and using
Lemma~\ref{lem:source-block-mass} independently on its blocks gives
\[
 \sum_{\mathfrak s:\,\pi(\mathfrak s)=\pi}\frac1{d(\mathfrak s)}
 \le \prod_{B\in\pi}\frac{\mathcal L_J(x,y)}{D(B)}
 \le\frac{\mathcal L_J(x,y)^J}{D(\mathcal P)}.
\]
Summation over the $B_J$ partitions proves \eqref{eq:multiset-mass}.

A system counted by $F_{\mathcal P}(N)$ assigns every nonempty block to a
distinct exact local factor counted by $W_y(N)$.  By the final assertion
of Lemma~\ref{lem:source-block-mass}, once the block and exact factor are
fixed there is at most one descriptor.  A partition with $b$ blocks has
at most $W_y(N)^b$ realizations, and summation over partitions proves the
first estimate in \eqref{eq:formal-pointwise}.

Every exact factor counted by $W_y(N)$ contributes at least one prime
factor greater than $y=e^T$ to $\varphi(N)$.  Their local totients divide
$\varphi(N)\le N\le2x$, so $y^{W_y(N)}\le2x$, which proves the second
estimate.
\end{proof}

For $H\ge1$ let
\begin{equation}\label{eq:large-system-set}
 \mathscr S_J(H)=
 \{(\mathcal P,\mathfrak s):\mathcal P\in((y,Y]\cap\mathbb P)^J,
      \ \mathfrak s\text{ is $Y$-admissible for }\mathcal P,
      \ d(\mathfrak s)>H\}.
\end{equation}
Because $w_{\mathcal P}(d)$ counts supplier systems rather than numerical
values alone, one has the exact identity
\begin{equation}\label{eq:aggregate-tail-identity}
 \sum_{\mathcal P}\Lambda_{>H}(\mathcal P)
 =\sum_{(\mathcal P,\mathfrak s)\in\mathscr S_J(H)}
      \frac1{d(\mathfrak s)}.
\end{equation}
Repeated numerical primes in $\mathcal P$ remain distinguishable through
the labels in $I$.

\begin{lemma}[Injective encoding and reconstruction]
\label{lem:supplier-encoding-injective}
Put
\[
 M_J(x,T)=1+J+\left\lfloor\frac{\log(2x)}T\right\rfloor.
\]
Order the sources of a supplier system by increasing base.  A linear
source is represented by $r=q-1$.  A power source is represented by the
pair $(q,e)$, where $e=|E_B|$.  For an integer $r$, let
$\mathcal O_y(r)$ be the ordered list of prime-factor occurrences of $r$
which exceed $y$, ordered first by prime value and then by occurrence
number.  Give a linear source the slot list $\mathcal O_y(q-1)$.  Give a
power source $e$ formal base slots carrying $q$, followed by
$\mathcal O_y(q-1)$.  The codomain is the disjoint union, over
$1\le b\le J$ and $\mathbf t\in\{L,P\}^b$, of tuples consisting of the
type data, a numerical entry $(q_s,e_s)$ at each $P$-position, a numerical
entry $r_s$ at each $L$-position, and a prime-preserving map from $I$
into the disjoint union of the resulting slot lists, the sources being
listed in a canonical order determined by the supplier system.  Only two
features of this slot family are used below: that it is canonical, and
that its total cardinality is at most $JM_J(x,T)$; any indexing with those
two properties may be substituted.

The encoding of $(\mathcal P,\mathfrak s)$ is
\begin{equation}\label{eq:supplier-encoding-datum}
 \operatorname{Enc}(\mathcal P,\mathfrak s)=
 \left(b,\mathbf t,
       (q_s,e_s)_{t_s=P},
       (r_s)_{t_s=L},
       \sigma\right),
\end{equation}
where $b$ is the number of sources, $\mathbf t\in\{L,P\}^b$ is the
source-type word in base order, the two numerical tuples occupy the
positions prescribed by $\mathbf t$, and
$\sigma:I\to\bigsqcup_{s=1}^b\mathcal S_s$ records the slot occupied by
each labelled demand.  For each source and each prime value, match the relevant labels in increasing
label order to the matching slots in occurrence order; this makes $\sigma$ a
canonical function of the supplier system, and the prime carried by
$\sigma(i)$ is $p_i$ for every $i\in I$.
Then $\operatorname{Enc}$ is injective.

For fixed numerical power and linear tuples, the number of possible
values of all remaining combinatorial data in
\eqref{eq:supplier-encoding-datum} is at most
\begin{equation}\label{eq:supplier-slot-count}
 [CJM_J(x,T)]^{3J}
 \le[CJ(1+J+\log(2x)/T)]^{3J}
\end{equation}
for an absolute constant $C$.
\end{lemma}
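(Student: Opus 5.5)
Injectivity is proved by an explicit decoder that rebuilds $(\mathcal P,\mathfrak s)$ from $\operatorname{Enc}(\mathcal P,\mathfrak s)$. The slot count is a separate, coarse counting argument over the combinatorial data.

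\emph{Decoding.} From $b$ and $\mathbf t$ we know how many sources there are and the type of each. Each $P$-entry $(q_s,e_s)$ gives the base $q_s$, and each $L$-entry $r_s$ gives the base $q_s=r_s+1$. The bases are distinct by Definition~\ref{def:supplier-system}, so listing them in increasing order fixes the source index $s$ unambiguously. The base list also determines the slot lists $\mathcal S_s$: for an $L$-source, $\mathcal S_s=\mathcal O_y(r_s)$; for a $P$-source, $e_s$ base slots carrying $q_s$ followed by $\mathcal O_y(q_s-1)$. These lists are canonical functions of the numerical entries. The map $\sigma$ then yields:
\begin{itemize}[leftmargin=2em]
\item the labelled tuple, via $p_i=$ the prime carried by $\sigma(i)$;
\item the partition $\pi$, whose blocks are the fibres $B_s=\sigma^{-1}(\mathcal S_s)$;
\item the bases $q_{B_s}=q_s$ and the descriptor types $t_s$.
\end{itemize}
For $P$-sources, $E_{B_s}$ is recovered as the set of labels sent to base slots. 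This agrees with the definition because, by the final assertion of Lemma~\ref{lem:source-block-mass}, $E_B=\{i\in B:p_i=q_B\}$ is forced, and no occurrence of $q$ lies in $q-1$. Hence the selected divisors $\delta_B$ are determined and $\mathfrak s$ is recovered completely.

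The one point needing care is that the encoding is well defined, i.e. every label of a block actually fits into a slot of its source. For a linear block, $D(B)\mid q_B-1$ and every $p_i>y$, so the labels with value $p$ number at most $\nu_p(q_B-1)$, which is the number of $p$-slots in $\mathcal O_y(q_B-1)$. For a power block, the $e_B$ labels in $E_B$ fill exactly the $e_B$ base slots, and $D(B\setminus E_B)\mid q_B-1$ handles the remaining labels in the same way. Matching labels in increasing order to slots in occurrence order makes $\sigma$ injective and prime-preserving. Since $\operatorname{Enc}$ has a left inverse, it is injective.

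\emph{Slot count.} Fix the numerical tuples. Then $b\le J$, and $\mathbf t$ has at most $2^J$ choices. Each source has at most $M_J(x,T)$ slots: $\mathcal O_y(q-1)$ has at most $\log(2x)/T$ elements since $q\le 2x$ and each occurrence exceeds $e^T$, and a power source adds $e\le J$ base slots. So the total number of slots is at most $JM_J(x,T)$. The map $\sigma$ sends $J$ labels into these slots, giving at most $(JM_J)^J$ choices. Multiplying, the combinatorial data take at most $J\cdot2^J\cdot(JM_J)^J\le[CJM_J]^{3J}$ values, and the second inequality in \eqref{eq:supplier-slot-count} is immediate from the definition of $M_J$.

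The main obstacle is not the counting but the bookkeeping behind injectivity. Repeated numerical primes must stay distinguishable, as in Remark~\ref{rem:supplier-example}, and one must check that the recovered block map and $E_B$ coincide with the original system rather than merely with some valid system. The forcing of $E_B$ and the distinctness of the bases are exactly what settle this.
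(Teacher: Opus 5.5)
Your proposal is correct and follows the paper's own proof. Both give an explicit decoder that rebuilds the bases and slot lists from the type word and numerical entries, then read the labelled tuple, the blocks, $E_B$ and the descriptors off $\sigma$, and finish with a coarse product count in which the total slot set has size at most $JM_J(x,T)$. The differences are minor: you add an explicit check that every label of a block fits a slot of its source (so $\sigma$ is well defined), and you do not count the exponents $e_s$ separately, which is legitimate because they belong to the fixed power tuple (the paper counts them anyway, a harmless overcount).
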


\begin{proof}
We give the decoding algorithm.  The type word places every numerical
entry in a unique source position.  For a linear entry $r_s$, recover the
base as $q_s=r_s+1$ and construct the complete slot list
$\mathcal O_y(r_s)$.  For a power entry $(q_s,e_s)$, construct the
$e_s$ formal $q_s$-slots and then factor $q_s-1$ to obtain the remainder
of its slot list.  The map $\sigma$ identifies, by label, exactly which
slots are used.  Labels mapped to source $s$ recover its block $B_s$;
labels in the formal base slots recover $E_{B_s}$, and the other labels
recover the demanded occurrences in $q_s-1$.  Thus the labelled tuple,
its partition, the ordered distinct bases, every descriptor, and the
selected divisor are all reconstructed.  Equal numerical demands cause
no ambiguity because their labels and occurrence slots are distinct.
This proves injectivity.

It remains to justify \eqref{eq:supplier-slot-count}.  There are at most
$J$ sources.  The number of choices for $b$, the type word, and the placement of the
two numerical subtuples is at most $(CJ)^J$.  The power exponents
$1\le e_s\le J$, together with ordering and padding conventions, contribute
at most a second factor $(CJ)^J$.  A linear source contains at most
$\log(2x)/T$ slots, while a power source contains at most $J$ formal
slots plus that many factorization slots.  Hence the disjoint union of
all source slots has cardinality at most
\[
 J\left(J+\frac{\log(2x)}T\right)\le J M_J(x,T).
\]
Allowing every map of the $J$ labelled demands into this enlarged slot
set has cardinality at most $[JM_J(x,T)]^J$.  Multiplication of the three displayed
bounds proves \eqref{eq:supplier-slot-count}.  The count deliberately
overcounts invalid maps, collisions of source bases, and maps which do
not preserve prime values; each enlargement preserves the asserted upper bound.
\end{proof}

\begin{proposition}[Aggregate large-source tail]
\label{prop:supplier-aggregate-tail}
For every integer $J\ge1$, every $y\ge3$, every $Y\le2x$, and every
$H\ge3$,
\begin{equation}\label{eq:aggregate-tail}
 \sum_{\mathcal P}\Lambda_{>H}(\mathcal P)
 \le \mathcal E_J(x;y,T)
 \sum_{\substack{m>2^{-J}HY^{-2J}\\P^+(m)\le Y}}
       \frac{\tau_J(m)}m,
\end{equation}
where the first sum is over all labelled $J$-tuples of primes in $(y,Y]$
and
\begin{equation}\label{eq:formal-encoding-factor}
 \mathcal E_J(x;y,T)
 \le [CJ(1+J+\log(2x)/T)]^{3J}\exp(CJ/y).
\end{equation}
\end{proposition}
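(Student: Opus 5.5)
The plan is to start from the exact identity \eqref{eq:aggregate-tail-identity} and push the sum over pairs $(\mathcal P,\mathfrak s)\in\mathscr S_J(H)$ through the injective encoding of Lemma~\ref{lem:supplier-encoding-injective}. For each pair we compare $1/d(\mathfrak s)$ with a product of reciprocals of the numerical entries of $\operatorname{Enc}(\mathcal P,\mathfrak s)$. A linear source has $\delta_s=q_s=r_s+1$, so $1/\delta_s\le 1/r_s$. A power source has $\delta_s=q_s^{e_s+1}$ with $q_s\le Y$, and we record it as the integer $m_s=q_s^{e_s}(q_s-1)$, which satisfies $m_s<\delta_s$. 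The loss in passing from $\delta_s$ to $m_s$ is only a factor $q_s/(q_s-1)\le e^{2/y}$ per source. Hence
\[
 \frac1{d(\mathfrak s)}\le e^{2J/y}\prod_{s}\frac1{m_s},
\]
where $m_s$ denotes $r_s$ for linear sources. Every $m_s$ is $Y$-friable: the factor $q_s-1$ is $Y$-friable by admissibility, and $q_s\le Y$ for power sources.

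Next we control the product $m=\prod_s m_s$. Since $d(\mathfrak s)>H$, and each linear $\delta_s=m_s+1\le 2m_s$, while each power $\delta_s\le Y^{2}m_s$ crudely, with at most $J$ sources we get $m>2^{-J}Y^{-2J}H$. This matches the truncation in \eqref{eq:aggregate-tail}.

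We then fix the combinatorial data $(b,\mathbf t,\sigma)$ and the exponents $e_s$. By injectivity, the map $(\mathcal P,\mathfrak s)\mapsto(m_1,\dots,m_b)$ is at most one-to-one on each such fiber. For a power source, $m_s$ and $e_s$ determine $q_s$, since $q_s$ is the largest prime with $q_s^{e_s}\mid m_s$ and $q_s-1\mid m_s/q_s^{e_s}$; we may simply keep $(q_s,e_s)$ as part of the data, and the reciprocal bound is unaffected. Summing $\prod_s 1/m_s$ over ordered $b$-tuples of $Y$-friable integers with product $m$ gives at most $\tau_b(m)\le\tau_J(m)$ per value of $m$. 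Summing over the fibers then multiplies the result by the slot count \eqref{eq:supplier-slot-count}. Combining this with the factor $e^{2J/y}$ yields \eqref{eq:aggregate-tail}, with $\mathcal E_J\le[CJ(1+J+\log(2x)/T)]^{3J}e^{CJ/y}$.

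The main obstacle is the bookkeeping. We must ensure that the fibers of the encoding, once the numerical data is replaced by the $m_s$, really are counted by $\tau_J(m)$ times the combinatorial factor, with no extra multiplicity. We must also justify the bound $\delta_s\le Y^2m_s$ for power sources, including the case $e_s$ large, where $q_s^{e_s+1}/(q_s^{e_s}(q_s-1))$ is still at most $2$. Because that ratio is at most $2$, the factor $Y^{-2J}$ is generous. This slack is presumably there to absorb the ordering and padding conventions.
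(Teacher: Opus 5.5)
Your proof is correct. Its skeleton matches the paper's:
\begin{itemize}
\item the exact identity \eqref{eq:aggregate-tail-identity};
\item the injective encoding;
\item fixing the combinatorial data, with $\sigma$ read as a map into a slot index set of size at most $JM_J(x,T)$ that does not depend on the numerical entries, as the encoding lemma permits;
\item padding to length $J$ to produce $\tau_J$.
\end{itemize}

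The one genuine difference is how you treat power sources. The paper keeps them outside the friable sum. It bounds their product by $d_{\rm pow}\le Y^{\sum_s(e_s+1)}\le Y^{2J}$, so only the linear product $\prod r_i$ must exceed $2^{-J}HY^{-2J}$. It then sums the power-source reciprocals separately, $\bigl(1+\sum_{q>y}\sum_{e\ge1}q^{-e-1}\bigr)^J\le e^{CJ/y}$. That is exactly where the factor $Y^{-2J}$ in the threshold and the factor $\exp(CJ/y)$ come from. So your guess that the $Y^{-2J}$ slack absorbs ordering and padding conventions is mistaken: it is the price of not folding power sources into $m$.

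You instead fold each power source into the $Y$-friable integer $m_s=q_s^{e_s}(q_s-1)$.
\begin{itemize}
\item Since $q\mapsto q^{e}(q-1)$ is strictly increasing, $(m_s,e_s)$ determines $q_s$, so fibre injectivity survives.
\item Since $m_s<\delta_s\le 2m_s$, you lose nothing in the reciprocal. Your factor $e^{2J/y}$ is superfluous, because $1/\delta_s<1/m_s$ outright.
\item You get the sharper threshold $m>2^{-J}H$, which implies the stated one because the summands are nonnegative.
\end{itemize}

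Your route therefore proves a slightly stronger inequality, with neither the $\exp(CJ/y)$ factor nor the $Y^{-2J}$ factor. In the moving high-rank application it would remove the factor $Z^{-2J}$ from $H_{\rm src}$. The paper's cruder version keeps the $\tau_J$-variable purely linear, parallel to Lemma~\ref{lem:moving-low-encoding}. Its extra loss is harmless there, since $JK=o(\eta L)$.
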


\begin{proof}
Start from the exact identity \eqref{eq:aggregate-tail-identity} and
apply the injection of Lemma~\ref{lem:supplier-encoding-injective}.  Let
$d_{\rm pow}$ be the product of the selected divisors belonging to power
sources.  If their selected exponents are $e_s+1$, then
$\sum_s e_s\le J$ and the number of power sources is at most $J$.
Since every power base is at most $Y$,
\begin{equation}\label{eq:power-source-product}
 d_{\rm pow}\le Y^{\sum_s(e_s+1)}\le Y^{2J}.
\end{equation}
After the label, distinctness, and divisibility restrictions are dropped,
the reciprocal mass of an ordered power-source tuple is bounded by
\[
 \left(1+\sum_{q>y}\sum_{e\ge1}q^{-e-1}\right)^J
 \le\exp(CJ/y).
\]
This accounts for the exponential factor in
\eqref{eq:formal-encoding-factor}; the choices of the exponents $e_s$
were already included in the combinatorial factor
\eqref{eq:supplier-slot-count}.

Write the linear source primes in source order as
$q_1,\ldots,q_t$ and set $r_i=q_i-1$.  Admissibility gives
$P^+(r_i)\le Y$.  If the complete selected divisor is greater than $H$,
then \eqref{eq:power-source-product} implies
\[
 \prod_{i=1}^tq_i>HY^{-2J}.
\]
Since $q_i=r_i+1\le2r_i$ and $t\le J$,
\begin{equation}\label{eq:linear-product-threshold}
 m:=\prod_{i=1}^tr_i>2^{-J}HY^{-2J}.
\end{equation}
Moreover $q_i^{-1}\le r_i^{-1}$.  Pad the ordered tuple
$(r_1,\ldots,r_t)$ by ones to length $J$.  For each fixed product $m$,
the number of resulting ordered $J$-tuples is at most $\tau_J(m)$.
Thus summing the numerical linear data gives the weighted friable tail in
\eqref{eq:aggregate-tail}.  Multiplying by the power-source mass and the
three explicitly counted combinatorial factors proves
\eqref{eq:aggregate-tail}--\eqref{eq:formal-encoding-factor}.
\end{proof}

\begin{proposition}[High-rank shifted correlation]
\label{prop:high-rank-correlation}
Uniformly for $1\le h\le y$,
\begin{multline}\label{eq:high-rank-correlation}
 \#\{n\le x:\ \varphi(n)=\varphi(n+h)=M,\ P^+(M)\le Y,\\
                  \Omega_y(M)\ge J\}
 \ll_J x\exp\{-JT+o_J(V)\}.
\end{multline}
\end{proposition}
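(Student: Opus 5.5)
Since $P^+(M)\le Y$ and $\Omega_y(M)\ge J$, the common totient $M$ has at least $J$ prime-factor occurrences in $(y,Y]$. We may therefore choose a labelled $J$-tuple $\mathcal P=(p_i)_{i\in I}$ of primes in $(y,Y]$ with $D(\mathcal P)\mid M$. Lemma~\ref{lem:supplier-coverage} applied to $N=n$ and to $N=n+h$ (both $\le 2x$) gives $F_{\mathcal P}(n)\ge1$ and $F_{\mathcal P}(n+h)\ge1$. Hence the count in \eqref{eq:high-rank-correlation} is at most
\[
 \sum_{\mathcal P}\sum_{n\le x}F_{\mathcal P}(n)F_{\mathcal P}(n+h),
\]
with the outer sum over all labelled $J$-tuples in $(y,Y]$. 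Each selected divisor has all prime factors $>y\ge h$, so Lemma~\ref{lem:shifted-divisor-convolution} applies to each $\mathcal P$ and bounds the inner sum by
\[
 2x\Lambda(\mathcal P)^2+4x\|F_{\mathcal P}\|_{\infty,2x}\Lambda_{>\sqrt x}(\mathcal P).
\]

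\emph{Diagonal term.} By \eqref{eq:multiset-mass}, $\Lambda(\mathcal P)\le B_J\mathcal L_J^J/D(\mathcal P)$, and for fixed $J$ and $y\ge 3$ we have $\mathcal L_J\ll_J\log_2 x$. Therefore
\[
 \sum_{\mathcal P}\Lambda(\mathcal P)^2\ll_J(\log_2x)^{2J}\Bigl(\sum_{y<p}\frac1{p^2}\Bigr)^J\ll_J(\log_2 x)^{2J}y^{-J}.
\]
This is the point where rank amplification enters: the squared reciprocal mass produces $y^{-J}=e^{-JT}$. By Lemma~\ref{lem:rank-scales}, $(\log_2x)^{2J}=e^{o_J(V)}$, so this term is $x e^{-JT+o_J(V)}$.

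\emph{Off-diagonal tail.} By \eqref{eq:formal-pointwise}, $\|F_{\mathcal P}\|_{\infty,2x}\le B_J(\log(2x)/T)^J\ll_J V^J=e^{o_J(V)}$. For $\sum_{\mathcal P}\Lambda_{>\sqrt x}(\mathcal P)$ we use Proposition~\ref{prop:supplier-aggregate-tail} with $H=\sqrt x$. The encoding factor is $[CJ(1+J+\log(2x)/T)]^{3J}e^{CJ/y}\ll_J V^{O_J(1)}=e^{o_J(V)}$. The threshold satisfies $2^{-J}\sqrt xY^{-2J}\ge x^{1/2-o(1)}\ge x^{1/(2J)}$ for $J\ge2$; for $J=1$ it is still $x^{1/2-o(1)}$. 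Lemma~\ref{lem:growing-weight-tail} with $R=Y$, $j=J$ and $v\asymp_J L/T\asymp_J V$ then bounds the weighted friable tail by
\[
 \exp\{-v\log v+O_J(\log_3x+v/\log v)\}.
\]
Here $v\log v=\omega_J(G)$ by the last item of Lemma~\ref{lem:rank-scales}, and since $T\asymp_J G$, this is $\le e^{-JT-G}$ for large $x$. Its hypotheses hold: $\log v\ll\log V\le\frac12 T$. The tail contribution is therefore $x e^{o_J(V)}e^{-\omega_J(G)}$, which is negligible compared with $xe^{-JT}$.

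\textbf{Main obstacle.} The delicate step is justifying the first reduction. The witness tuple $\mathcal P$ must be chosen \emph{uniformly} for both $n$ and $n+h$, and the count must be bounded by a sum over $\mathcal P$ without overcounting issues. This works because we only need an upper bound: each $n$ contributes at least one to the double sum, since $D(\mathcal P)\mid M=\varphi(n)=\varphi(n+h)$ yields coverage on both sides simultaneously. A second point to check is the bookkeeping $\mathcal L_J(x,y)\ll\log_2 x$, which needs $e^{2J/y}=O(1)$; this holds since $y=e^T\to\infty$. Finally, we must confirm that all $o_J(V)$ losses are uniform in $h\le y$. They are, because $h$ enters only through the coprimality hypothesis of Lemma~\ref{lem:shifted-divisor-convolution}.
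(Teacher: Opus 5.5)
Your proposal is correct and follows the paper's proof essentially step for step: coverage on both $n$ and $n+h$ via Lemma~\ref{lem:supplier-coverage}, the shifted divisor convolution for each labelled tuple, the squared reciprocal mass producing $e^{-JT}$, and the aggregate tail through Proposition~\ref{prop:supplier-aggregate-tail} and Lemma~\ref{lem:growing-weight-tail}. The only slip is cosmetic: with $R=Y$ the Euler-product loss $\log_2(3Y)$ is of order $\log_2 x$ rather than $\log_3 x$, which is still $o(V)$ and hence harmless.
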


\begin{proof}
For each common value $M$, choose the first $J$ prime occurrences above
$y$ in nondecreasing prime order, breaking equal-prime ties by their
multiplicity index.  This gives a labelled tuple $\mathcal P$.  By
Lemma~\ref{lem:supplier-coverage}, on each shifted integer the indicator
of $D(\mathcal P)\mid\varphi(N)$ is at most $F_{\mathcal P}(N)$.
Consequently the contribution attached to $\mathcal P$ is at most
\[
 \sum_{n\le x}F_{\mathcal P}(n)F_{\mathcal P}(n+h).
\]
Every prime factor of the weight support exceeds $y\ge h$, so
Lemma~\ref{lem:shifted-divisor-convolution} applies.  It gives
\begin{equation}\label{eq:fixed-shifted-convolution-use}
 \sum_{n\le x}F_{\mathcal P}(n)F_{\mathcal P}(n+h)
 \le2x\Lambda(\mathcal P)^2
 +4x\|F_{\mathcal P}\|_{\infty,2x}
       \Lambda_{>\sqrt x}(\mathcal P).
\end{equation}

For fixed $J$, Lemma~\ref{lem:multiset-supplier-mass} gives
\[
 \Lambda(\mathcal P)
 \le\frac{e^{o_J(V)}}{D(\mathcal P)},
 \qquad
 \|F_{\mathcal P}\|_{\infty,2x}\le e^{o_J(V)}.
\]
Indeed $\log\mathcal L_J(x,y)=O_J(\log_3 x)$ and
$W_y(N)\ll_JV$.  Summing the first term of
\eqref{eq:fixed-shifted-convolution-use} over all labelled tuples gives
\begin{align*}
 2x\sum_{\mathcal P}\Lambda(\mathcal P)^2
 &\le xe^{o_J(V)}
 \left(\sum_{p>y}\frac1{p^2}\right)^J\\
 &\ll_J x\exp\{-JT+o_J(V)\}.
\end{align*}

For the second term, Proposition~\ref{prop:supplier-aggregate-tail}, with
$H=\sqrt x$, gives
\[
 \sum_{\mathcal P}\Lambda_{>\sqrt x}(\mathcal P)
 \le e^{o_J(V)}
 \sum_{\substack{m>H_0\\P^+(m)\le Y}}\frac{\tau_J(m)}m,
 \qquad
 H_0=2^{-J}x^{1/2}Y^{-2J}.
\]
Here $\log H_0=\frac12L-o_J(L)$ and
$\log Y=JT\asymp_JG$.  Lemma~\ref{lem:growing-weight-tail} therefore
makes the last sum $e^{-\omega_J(G)}$.  After multiplication by the
pointwise factor in \eqref{eq:fixed-shifted-convolution-use}, the
large-divisor contribution is negligible.  This proves the proposition.
\end{proof}

\begin{theorem}[Fixed-rank amplification with the diagonal displayed]
\label{thm:rank-amplification-variable}
Fix $J\ge1$ and suppose $T\asymp_JG$, $y=e^T$, $Y=y^J$, and $h\le y$.
Then
\begin{equation}\label{eq:rank-amplification-variable}
 S_h^\varphi(x)=D_{h,>Y}^\varphi(x)
 +O_J\!\left(x\exp\{-\min(\mathcal S_x(T),JT)+o_J(V)\}\right).
\end{equation}
\end{theorem}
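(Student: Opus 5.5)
We split the solutions $n\le x$ of $\varphi(n)=\varphi(n+h)=M$ into three classes according to the common totient $M$: the \emph{nonsmooth} class $P^+(M)>Y$, the \emph{high-rank smooth} class $P^+(M)\le Y$ with $\Omega_y(M)\ge J$, and the \emph{low-rank smooth} class $P^+(M)\le Y$ with $\Omega_y(M)<J$. The three classes are disjoint and exhaust all solutions, so $S_h^\varphi(x)$ is the sum of their cardinalities. The diagonal term will come only from the first class. Each of the other two must be shown to be $O_J(x\exp\{-\min(\mathcal S_x(T),JT)+o_J(V)\})$.

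\emph{Low-rank smooth class.} Only $n$ matters here, and $P^+(\varphi(n))\le Y$, $\Omega_y(\varphi(n))<J$. Lemma~\ref{lem:low-rank-compression} with $X=x$ then bounds this class by $x\exp\{-\mathcal S_x(T)+o_J(V)\}$.

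\emph{High-rank smooth class.} Since $h\le y$, Proposition~\ref{prop:high-rank-correlation} applies directly and gives $\ll_J x\exp\{-JT+o_J(V)\}$.

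\emph{Nonsmooth class.} This is exactly $B_h(x;Y)$. Since $h\le y\le Y\le x$ (because $T\asymp_J G=o(L)$), Proposition~\ref{prop:transfer-diagonal} with cutoff $Y$ gives $B_h(x;Y)=D_{h,>Y}^\varphi(x)+\mathcal E_h(x;Y)$. It remains to bound $\mathcal E_h(x;Y)$ by $x\exp\{-JT+o_J(V)\}$.
\begin{itemize}
\item The elementary terms $x/Y$, $x^{1/2}$, $x\log x/Y$ and $x(\log Y)^2/Y$ are each $x\exp\{-JT+O(\log L)\}$, and $\log L=o(V)$ by Lemma~\ref{lem:scales}. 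The term $x^{1/2}$ is smaller still, since $JT=o(L)$.
\item The divisor-weighted sum $\sum_{Y<p\le(2xY)^{1/2}}\Psi_\tau(2xY/p^2,p)$ is handled by Proposition~\ref{prop:b4-general} applied with $JT$ in place of $T$. This is legitimate because $JT\asymp_J G$ lies in a fixed window $c_0G\le JT\le C_0G$ with constants depending on $J$. The result is $\ll_J x\exp\{-JT+o_J(V)\}$.
\end{itemize}

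Adding the three estimates yields \eqref{eq:rank-amplification-variable}. The $o_J(V)$ terms are uniform in $h\le y$, because none of the inputs depends on $h$ beyond the condition $h\le y$.

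The substantive work is already contained in Proposition~\ref{prop:high-rank-correlation} and Lemma~\ref{lem:low-rank-compression}, so the assembly is straightforward. The step needing most care is the nonsmooth class. The transfer proposition is stated with a cutoff $y$ satisfying $h\le y$, and we apply it at cutoff $Y=y^J$, not $y$. We must also check that Proposition~\ref{prop:b4-general}, which is stated for $T\asymp G$, applies to the rescaled parameter $JT$. This requires keeping track of the dependence of the window constants on $J$, which is harmless for fixed $J$.
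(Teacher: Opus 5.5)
Your proposal is correct and matches the paper's proof: the same disjoint three-way split of the solutions (the class $P^+(M)>Y$, then $P^+(M)\le Y$ with $\Omega_y(M)<J$ or with $\Omega_y(M)\ge J$), handled respectively by Proposition~\ref{prop:transfer-diagonal} with Proposition~\ref{prop:b4-general} at cutoff $Y=e^{JT}$, by Lemma~\ref{lem:low-rank-compression}, and by Proposition~\ref{prop:high-rank-correlation}. Your explicit checks are the hypotheses the paper uses implicitly when it applies these results: that $h\le y\le Y\le x$, and that $JT$ lies in a $J$-dependent window $c_0G\le JT\le C_0G$.
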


\begin{proof}
Let $M=\varphi(n)=\varphi(n+h)$.  If $P^+(M)>Y$, Proposition
\ref{prop:transfer-diagonal}, applied with cutoff $Y$, gives the exact
inclusion and cardinality relation
\[
 0\le
 \#\{n\le x:P^+(M)>Y\}-D_{h,>Y}^\varphi(x)
 \le \mathcal E_h(x;Y).
\]
Proposition~\ref{prop:b4-general}, with
$\log Y=JT\asymp_JG$, then yields
\[
 0\le
 \#\{n\le x:P^+(M)>Y\}-D_{h,>Y}^\varphi(x)
 \ll_J xe^{-JT+o_J(V)}.
\]
All elementary transfer terms are absorbed here, and $x^{1/2}$ is
negligible.

Suppose now that $P^+(M)\le Y$.  If $\Omega_y(M)<J$, the integer $n$ is
counted by Lemma~\ref{lem:low-rank-compression}.  If
$\Omega_y(M)\ge J$, Proposition~\ref{prop:high-rank-correlation} applies.
Combining the three disjoint cases proves \eqref{eq:rank-amplification-variable}.
\end{proof}

\begin{theorem}[All shifts: amplified diagonal decomposition]
\label{thm:all-shifts}
Fix $J\ge1$ and put
\[
 T_J=G/\sqrt J,\qquad y_J=e^{T_J},\qquad Y_J=e^{\sqrt JG}.
\]
There is a function $\varepsilon_J(x)\to0$ such that, for
$x\ge x_0(J)$, uniformly for positive integers $h\le y_J$,
\[
 S_h^\varphi(x)=D_{h,>Y_J}^\varphi(x)
 +O_J\!\left(xe^{-\sqrt JG+\varepsilon_J(x)V}\right).
\]
For every fixed $h$,
\[
 S_h^\varphi(x)\ll_{h,J}\frac{x}{(\log x)^2}
 +xe^{-\sqrt JG+\varepsilon_J(x)V}.
\]
If $h$ is odd, the diagonal is empty.
\end{theorem}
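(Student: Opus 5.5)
The plan is to deduce Theorem~\ref{thm:all-shifts} directly from Theorem~\ref{thm:rank-amplification-variable} by choosing $T=T_J=G/\sqrt J$. Then $T\asymp_J G$, $y=y_J$, and $Y=y^J=e^{JT_J}=e^{\sqrt JG}=Y_J$. The hypothesis $h\le y$ is exactly $h\le y_J$, so \eqref{eq:rank-amplification-variable} gives
\[
 S_h^\varphi(x)=D_{h,>Y_J}^\varphi(x)+O_J\!\left(x\exp\{-\min(\mathcal S_x(T_J),\sqrt JG)+o_J(V)\}\right).
\]
What remains is to show that $\mathcal S_x(T_J)\ge \sqrt J G-o_J(V)$, and then to convert the $o_J(V)$ terms into an explicit function $\varepsilon_J(x)V$.

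For the smooth exponent we have $L/T_J=\sqrt J\,V$. Lemma~\ref{lem:scales} and the computation in the proof of Lemma~\ref{lem:bfps-saddle} give
\[
 \log_2(\sqrt JV)=\log_3x-\log2+o_J(1),\qquad \log_3(\sqrt JV)=\log_4x+o_J(1).
\]
The constant $\tfrac12\log J$ is absorbed because $\log V\to\infty$. Hence
\[
 \mathcal S_x(T_J)=\sqrt JV(\sA+o_J(1))=\sqrt JG+o_J(V).
\]
This is exactly the balance $T\cdot J=G^2/T$ transported to rank $J$, and both terms of the minimum coincide up to $o_J(V)$. The $o_J(V)$ errors come from Lemma~\ref{lem:low-rank-compression}, Proposition~\ref{prop:high-rank-correlation} and Proposition~\ref{prop:b4-general}. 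Each of these is uniform in $h\le y$ and depends only on $x$ and $J$, so taking their sum divided by $V$ as $\varepsilon_J(x)$ gives the first display.

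For fixed $h$, Proposition~\ref{prop:D-bound} with $y=Y_J\ge2$ bounds $D_{h,>Y_J}^\varphi(x)\ll_h x/(\log x)^2$ for large $x$. Since $h\le y_J$ holds once $x$ is large in terms of $h$, the second display follows. For odd $h$, Lemma~\ref{lem:J-finite} gives $\mathcal J_h=\varnothing$, so the diagonal count is zero.

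The step I expect to need the most care is the uniformity bookkeeping. The $o_J(V)$ in Theorem~\ref{thm:rank-amplification-variable} must be genuinely uniform over $h\le y_J$. In particular, the error function of Lemma~\ref{lem:bfps-uniform}(i) must be applied with $c=C=1/\sqrt J$, or within a fixed window containing that value, so that $\epsilon_{c,C}$ depends only on $J$. I would also check that the expansion $\mathcal S_x(T_J)=\sqrt JG+o_J(V)$ does not degrade; the relative error $o(1)$ multiplies $\sqrt J V$, which is fine for fixed $J$.
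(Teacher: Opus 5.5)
Your proposal is correct and follows the paper's proof: you specialize Theorem~\ref{thm:rank-amplification-variable} at $T=T_J$, so that $Y=y_J^{J}=Y_J$, $JT_J=\sqrt JG$ and $\mathcal S_x(T_J)=\sqrt JG+o_J(V)$, and then invoke Proposition~\ref{prop:D-bound} for fixed $h$ and Lemma~\ref{lem:J-finite} for odd $h$. Your uniformity remarks are bookkeeping that the paper leaves implicit: fixing the BFPS window around $1/\sqrt J$, and noting that $h\le y_J$ once $x$ is large in terms of $h$.
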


\begin{proof}
Apply Theorem~\ref{thm:rank-amplification-variable} with
$T=T_J$.  Since
\[
 \mathcal S_x(T_J)=\sqrt JG+o_J(V),
 \qquad JT_J=\sqrt JG,
\]
the asserted decomposition follows.  Proposition~\ref{prop:D-bound}
bounds the diagonal for fixed $h$, and Lemma~\ref{lem:J-finite} makes it
empty for odd $h$.
\end{proof}

\begin{proof}[Proof of Theorem~\ref{thm:main}]
Apply Theorem~\ref{thm:all-shifts}; Lemma~\ref{lem:J-finite} removes the diagonal for odd shifts.
\end{proof}

\subsection{Moving rank on the \texorpdfstring{$\sqrt{\log x\log_2 x}$}{sqrt(log x loglog x)} scale}
\label{subsec:moving-rank}

The fixed-rank proof is uniform enough to permit the rank to grow, but its
large-supplier estimates must then be replaced by a divisor-weighted
friable tail.  We give all dependence on the moving rank explicitly.
Put
\[
 L_2=\log_2 x,\qquad L_3=\log_3 x,\qquad
 \mathcal R=(LL_2)^{1/2},\qquad \eta=\sA^{-1/2}.
\]
For all sufficiently large $x$, define
\begin{equation}\label{eq:moving-parameters}
 E=(\tfrac12-\eta)\mathcal R,
 \qquad K_0=(\tfrac12-2\eta)\mathcal R,
 \qquad T=\frac{L\sA}{2E},
\end{equation}
\begin{equation}\label{eq:moving-JK}
 J=\left\lfloor\frac{K_0}{T}\right\rfloor,
 \qquad K=JT,
 \qquad y=e^T,
 \qquad Z=e^K.
\end{equation}
Notice that $J$ is no longer fixed.  The quantities are defined in the
displayed order; no auxiliary parameter is subsequently sent to zero.

\begin{lemma}[Moving-rank bookkeeping]\label{lem:moving-bookkeeping}
With the parameters in \eqref{eq:moving-parameters}--\eqref{eq:moving-JK},
\begin{align}
 T&=(1+o(1))\sA\sqrt{\frac{L}{L_2}},
 &J&=(\tfrac12+o(1))\frac{L_2}{\sA},
 \label{eq:moving-asymptotics}\\
 K&=(\tfrac12-2\eta+o(\eta))\mathcal R.
 \label{eq:moving-K}
\end{align}
Moreover,
\begin{equation}\label{eq:moving-overheads}
 T+J^2L_2=o(\eta\mathcal R),
 \qquad
 JK=o(\eta L).
\end{equation}  Finally, if $3\le H\le2x$ and a function $\delta_H(x)\to0$ satisfies
\begin{equation}\label{eq:moving-H}
 \log H\ge\frac12L-\delta_H(x)\eta L,
 \qquad v=\frac{\log H}{K},
\end{equation}
then
\begin{equation}\label{eq:moving-tail-margin}
 v\log v\ge K+3\eta\mathcal R
\end{equation}
and
\begin{equation}\label{eq:moving-tail-error}
 J\left(\log_2(3Z)+\frac{v}{\log v}\right)
 =o(\eta\mathcal R)
\end{equation}
for all sufficiently large $x$.
\end{lemma}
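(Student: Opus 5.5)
The plan is a direct asymptotic computation: every quantity in \eqref{eq:moving-parameters}--\eqref{eq:moving-JK} is an explicit function of $x$, so each assertion of the lemma reduces to comparing powers of $L$, $L_2$ and $\sA$. Throughout I use $\eta=\sA^{-1/2}\to0$, $\sA\sim L_3$ (Lemma~\ref{lem:scales}), and the consequences $L_3/L_2=o(\eta)$ and $\sA/L_2\to0$.

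\emph{Asymptotics \eqref{eq:moving-asymptotics}--\eqref{eq:moving-K}.}
\begin{itemize}
\item From $2E=(1-2\eta)\mathcal R$ one gets
\[
 T=\frac{L\sA}{(1-2\eta)\mathcal R}=(1+O(\eta))\,\sA\sqrt{L/L_2}.
\]
\item Next,
\[
 \frac{K_0}{T}=\frac{(\tfrac12-2\eta)(1-2\eta)\mathcal R^2}{L\sA}
 =\bigl(\tfrac12+O(\eta)\bigr)\frac{L_2}{\sA}.
\]
Since $L_2/\sA\to\infty$, taking the floor costs only a factor $1+o(1)$, which gives the formula for $J$.
\item For $K$, write $K=JT=K_0-\theta T$ with $0\le\theta<1$. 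It then suffices to show $T=o(\eta\mathcal R)$. Indeed
\[
 \frac{T}{\eta\mathcal R}\asymp\frac{\sA^{3/2}}{L_2}\to0,
\]
which is the first part of \eqref{eq:moving-overheads} and yields \eqref{eq:moving-K}.
\end{itemize}

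\emph{The overheads \eqref{eq:moving-overheads}.}
\begin{itemize}
\item $J^2L_2\asymp L_2^3/\sA^2$, which is polylogarithmic in $L$ and hence negligible against $\eta\mathcal R=\sqrt{LL_2/\sA}$.
\item $JK\asymp(L_2/\sA)\mathcal R$, so
\[
 \frac{JK}{\eta L}\asymp\frac{L_2^{3/2}}{\sA^{1/2}L^{1/2}}\to0 .
\]
\end{itemize}

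\emph{The tail margin \eqref{eq:moving-tail-margin}.} This is the main step, because here the $\eta$-level terms must be tracked exactly rather than absorbed.
\begin{itemize}
\item From \eqref{eq:moving-H} and \eqref{eq:moving-K},
\[
 v\ge\frac{L}{2K}\bigl(1-2\delta_H\eta\bigr)=\sqrt{L/L_2}\,\bigl(1+4\eta+o(\eta)\bigr).
\]
\item Also $H\le2x$ gives $v\le 2L/K\ll\sqrt{L/L_2}$. Hence
\[
 \log v=\tfrac12L_2-\tfrac12\log L_2+O(\eta)=\tfrac12L_2\bigl(1+O(L_3/L_2)\bigr).
\]
\item Multiplying these, and using $L_3/L_2=o(\eta)$ and $\delta_H\to0$,
\[
 v\log v\ge\frac{LL_2}{4K}\bigl(1-o(\eta)\bigr)=\frac{\mathcal R^2}{4K}\bigl(1-o(\eta)\bigr).
\]
\item Substituting $K=(\tfrac12-2\eta+o(\eta))\mathcal R$ gives
\[
 \frac{\mathcal R^2}{4K}=\tfrac12\mathcal R\,(1+4\eta+o(\eta))=(\tfrac12+2\eta+o(\eta))\mathcal R,
\]
so $v\log v\ge(\tfrac12+2\eta-o(\eta))\mathcal R$.
\item The target is $K+3\eta\mathcal R=(\tfrac12+\eta+o(\eta))\mathcal R$. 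This leaves a surplus of order $\eta\mathcal R$, so the inequality holds for large $x$.
\end{itemize}
The point needing care is the relative-error budget. The gap $K_0<E$ of size $\eta\mathcal R$ must exceed every $O(L_3/L_2)$, $o(\eta)$ and floor loss. Each of these was shown above to be $o(\eta)$ relatively, so the lower bound should close as planned.

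\emph{The error bound \eqref{eq:moving-tail-error}.}
\begin{itemize}
\item First, $\log_2(3Z)=\log(K+O(1))\ll\log L$. So $J\log_2(3Z)\ll L_2\log L/\sA$, which is tiny compared with $\eta\mathcal R$.
\item Second, with $v\ll\sqrt{L/L_2}$ and $\log v\gg L_2$,
\[
 J\frac{v}{\log v}\ll\frac{L_2}{\sA}\cdot\frac{\sqrt{L/L_2}}{L_2}=\frac{\sqrt{L/L_2}}{\sA}.
\]
Its ratio to $\eta\mathcal R$ is $\asymp 1/(L_2\sqrt{\sA})\to0$.
\end{itemize}
This completes the plan.
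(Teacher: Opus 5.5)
Your proposal is correct and follows the paper's proof essentially line by line. Both use direct substitution for $T$, $J$ and $K$, and the same ratios $\sA^{3/2}/L_2$ and $L_2^{3/2}/\sqrt{\sA L}$ for the overheads. Both then evaluate $v\log v\approx\mathcal R^2/(4K)$ at the lower endpoint of \eqref{eq:moving-H} and compare it with $K+3\eta\mathcal R$.

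There are two cosmetic slips. First, since $H$ may be as large as $2x$, the two-sided remainder in $\log v$ is $O(1)$ rather than $O(\eta)$. This is harmless, because you only use the lower bound and the relative form $\tfrac12L_2(1+O(L_3/L_2))$. Second, the $\eta\mathcal R$ surplus comes from $K$ lying $2\eta\mathcal R$ below $\tfrac12\mathcal R$, so that $\mathcal R^2/(4K)-K\approx4\eta\mathcal R$. It does not come from the gap $E-K_0$, which enters this lemma only through the floor loss $T$.
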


\begin{proof}
Since $\sA\sim L_3$ and $\eta=\sA^{-1/2}$, one has
$\eta\to0$ and $\sA/L_2=o(\eta)$.  Direct substitution gives
\[
 T=\frac{\sA}{1-2\eta}\sqrt{\frac{L}{L_2}},
 \qquad
 J=\left(\frac12+o(1)\right)\frac{L_2}{\sA}.
\]
Moreover,
\[
 \frac{T}{\eta\mathcal R}
 \asymp\frac{\sA^{3/2}}{L_2}\longrightarrow0,
 \qquad
 \frac{J^2L_2}{\eta\mathcal R}
 \ll\frac{L_2^{5/2}}{\sA^{3/2}\sqrt L}\longrightarrow0,
\]
and
\[
 \frac{JK}{\eta L}
 \ll\frac{L_2^{3/2}}{\sqrt{\sA L}}\longrightarrow0.
\]
These estimates prove \eqref{eq:moving-overheads}.  Since
$0\le K_0-K<T=o(\eta\mathcal R)$, they also give
\eqref{eq:moving-K}.

Write $c=K/\mathcal R$.  By \eqref{eq:moving-K},
$c=\frac12-2\eta+o(\eta)$.  The smallest admissible $v$ occurs at the lower endpoint in
\eqref{eq:moving-H}; there
\[
 v=\left(\frac12-o(\eta)\right)\frac{L}{K}
   =\left(\frac1{2c}-o(\eta)\right)\sqrt{\frac{L}{L_2}},
\]
and
\[
 \log v=\frac12L_2-\frac12L_3+O(1)+o(\eta L_2).
\]
Consequently
\begin{align*}
 v\log v
 &=\left(\frac1{4c}+o(\eta)\right)\mathcal R\\
 &=\left(\frac12+2\eta+O(\eta^2)+o(\eta)\right)\mathcal R.
\end{align*}
Comparison with \eqref{eq:moving-K} proves
\eqref{eq:moving-tail-margin}.  The upper bound $H\le2x$ also gives
$v\ll\sqrt{L/L_2}$.  Consequently
\[
 J\log_2(3Z)\ll \frac{L_2^2}{\sA},
 \qquad
 \frac{Jv}{\log v}\ll\frac1{\sA}\sqrt{\frac{L}{L_2}},
\]
and both quantities are $o(\eta\mathcal R)$.  This proves
\eqref{eq:moving-tail-error}.
\end{proof}
\begin{lemma}[Admissibility of every moving-rank range]
\label{lem:moving-admissibility}
There is an absolute $x_0$ such that, for $x\ge x_0$, the parameters in
\eqref{eq:moving-parameters}--\eqref{eq:moving-JK} satisfy
\begin{equation}\label{eq:moving-domain-basic}
 \sA>0,\quad J\ge2,\quad 3\le y<Z<x,\quad 3Z^3<2x.
\end{equation}
Consequently every shift in the theorem range satisfies
$1\le h\le y<Z$.  Put
\[
 H_{\rm src}=2^{-J}x^{1/2}Z^{-2J},\qquad
 v_{\rm src}=\frac{\log H_{\rm src}}K,
\]
and
\[
 u_{\rm tr}^{-}=\frac{L-3K+\log 2}{2K},\qquad
 u_{\rm tr}^{+}=\frac{L-K+\log 2}{K}.
\]
Then
\begin{equation}\label{eq:moving-domain-tail}
 H_{\rm src}>3,\qquad v_{\rm src}\ge3,
 \qquad \log v_{\rm src}\le\tfrac12\log Z,
\end{equation}
and
\begin{equation}\label{eq:moving-domain-transfer}
 u_{\rm tr}^{-}\ge u_0,
 \qquad \log u_{\rm tr}^{+}\le\tfrac12\log Z,
 \qquad u_{\rm tr}^{+}\le Z^{1/2}.
\end{equation}
In particular, every application of
Lemmas~\ref{lem:weighted-smooth} and \ref{lem:growing-weight-tail} in the
moving-rank proof lies in its stated domain, including the endpoints
$W=x^{1/2}$, $W=2x$, $p=Z$, and $p=Z^2$.
\end{lemma}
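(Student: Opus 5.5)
The proof is pure bookkeeping from the asymptotics of Lemma~\ref{lem:moving-bookkeeping}. Every quantity is compared with $L$ on a logarithmic scale: $\mathcal R=\sqrt{LL_2}=o(L)$, $T\asymp\sA\sqrt{L/L_2}$, $K\sim\frac12\mathcal R$, and $J\sim\frac12L_2/\sA\to\infty$. The plan is to check each inequality in \eqref{eq:moving-domain-basic}--\eqref{eq:moving-domain-transfer} in turn. Each check reduces to a comparison of two explicit functions of $L$ whose ratio tends to $0$ or $\infty$; we then take $x_0$ large enough that every such limit has settled.

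\textbf{Step 1: the basic domain \eqref{eq:moving-domain-basic}.}
\begin{itemize}
\item Positivity $\sA>0$ is the standing assumption for large $x$.
\item $J\ge2$ follows from $J=(\frac12+o(1))L_2/\sA\to\infty$.
\item $y\ge3$ holds since $T\to\infty$.
\item $y<Z$ is $T<JT$, which follows from $J\ge2$.
\item $Z<x$ and $3Z^3<2x$ both follow from $3K+\log 3<L+\log 2$, which holds because $K\ll\mathcal R=o(L)$.
\end{itemize}
The shift range $h\le y<Z$ is then immediate.

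\textbf{Step 2: the source-tail domain \eqref{eq:moving-domain-tail}.} Write
\[
\log H_{\rm src}=\tfrac12L-J\log2-2JK.
\]
By \eqref{eq:moving-overheads}, $JK=o(\eta L)$, and $J=o(L)$ trivially. Hence $\log H_{\rm src}=\frac12L(1-o(\eta))$, so $H_{\rm src}>3$.

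This also shows that $H=H_{\rm src}$ satisfies \eqref{eq:moving-H}. Consequently
\[
v_{\rm src}\asymp L/K\asymp\sqrt{L/L_2}\to\infty,
\]
which gives $v_{\rm src}\ge3$. Finally, $\log v_{\rm src}\le\frac12\log L$, while $\frac12\log Z=\frac12K\asymp\mathcal R$ grows much faster, so the last inequality holds.

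\textbf{Step 3: the transfer domain \eqref{eq:moving-domain-transfer}.} These are the hypotheses of Lemma~\ref{lem:weighted-smooth} in the analogue of Proposition~\ref{prop:b4-general} with cutoff $Z$. In that setting $p\in(Z,Z^2]$ and $X_p=2xZ/p^2$, so $u_p$ lies between $u^-_{\rm tr}$ and $u^+_{\rm tr}$ at the endpoints $p=Z^2$ and $p=Z$.

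For the lower bound, $u_{\rm tr}^-=(L-3K+\log2)/(2K)\sim L/(2K)\to\infty$, so $u^-_{\rm tr}\ge u_0$ for large $x$.

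For the upper bounds, $u^+_{\rm tr}\le 2L/K\ll\sqrt{L/L_2}$. Hence
\[
\log u_{\rm tr}^+\le\tfrac12\log L+O(1)\le\tfrac12K=\tfrac12\log Z,
\]
and exponentiating gives $u^+_{\rm tr}\le Z^{1/2}$.

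The second condition $u\log u\ge C_0\log_2(3R)$ of \eqref{eq:weighted-range} is not part of the statement. It holds anyway, since the left side is of order $\sqrt{L/L_2}\,L_2$ and the right side is $O(\log\mathcal R)$.

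\textbf{Step 4: the closing sentence.} We argue that every application in the moving proof has parameters $(X,R)$ or $(H,R)$ monotonically bracketed by these extreme cases. For the weighted smooth tail, $u_p$ is monotone in $p$, so the endpoints $p=Z$ and $p=Z^2$ bound every intermediate prime. For the BFPS range $x^{1/2}\le W\le 2x$, the relevant $u$ is also monotone in $W$. It therefore suffices to have verified the endpoints.

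\textbf{Main obstacle.} The only delicate point is Step 2. There the $Z^{-2J}$ loss must be absorbed using $JK=o(\eta L)$ from \eqref{eq:moving-overheads}, so that $H_{\rm src}$ remains admissible in \eqref{eq:moving-H}. We would quote that estimate rather than recompute it; everything else is a crude growth comparison.
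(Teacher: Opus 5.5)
Your proposal is correct and follows the paper's proof essentially line by line. Both arguments derive every inequality from the asymptotics $T\to\infty$, $J\to\infty$, $K\sim\frac12\mathcal R=o(L)$ and $JK=o(\eta L)$ of Lemma~\ref{lem:moving-bookkeeping}, using $v_{\rm src},u_{\rm tr}^{\pm}\asymp\sqrt{L/L_2}$ so that their logarithms are $O(L_2)=o(K)$. (One small point: writing $\log H_{\rm src}=\frac12L(1-o(\eta))$ needs $J=o(\eta L)$ rather than just $J=o(L)$, but this follows at once from $J\le JK$.)
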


\begin{proof}
Lemma~\ref{lem:moving-bookkeeping} gives
\[
 T\to\infty,\qquad J\to\infty,\qquad K\sim\tfrac12\mathcal R=o(L),
 \qquad T=o(K).
\]
Since $3K=o(L)$, they imply $3Z^3<2x$ as well as all the other
claims in \eqref{eq:moving-domain-basic}.  They also give
\[
 \log H_{\rm src}
 =\tfrac12L-2JK-J\log 2
 =\tfrac12L-o(\eta L),
\]
because $JK=o(\eta L)$.  Hence $H_{\rm src}>3$ and
\[
 v_{\rm src}\asymp\frac{L}{K}\asymp\sqrt{\frac{L}{L_2}}\to\infty.
\]
Thus $\log v_{\rm src}=O(L_2)$, whereas
$\tfrac12\log Z=K/2\asymp\mathcal R$, proving
\eqref{eq:moving-domain-tail}.

The same calculation, with
$\tfrac12(L-3K+\log 2)=\tfrac12L-o(\eta L)$, gives
$u_{\rm tr}^{-}\asymp\sqrt{L/L_2}\to\infty$, and hence
$u_{\rm tr}^{-}\ge u_0$ for all sufficiently large $x$.  Also
$u_{\rm tr}^{+}\asymp\sqrt{L/L_2}$, so
$\log u_{\rm tr}^{+}=O(L_2)=o(K)$ and
$u_{\rm tr}^{+}\le e^{K/2}=Z^{1/2}$.  This proves
\eqref{eq:moving-domain-transfer}.  Finally, the BFPS endpoint conditions
were checked explicitly in Lemma~\ref{lem:bfps-uniform}; the remaining
friable-tail hypotheses follow from the displayed inequalities and from
$v\log v\gg K\gg\log_2(3Z)$.
\end{proof}

\begin{proposition}[Moving-rank dependency ledger]
\label{lem:moving-dependence-ledger}
Let $\mathcal E_J(x;y,T)$ be the encoding factor in
\eqref{eq:formal-encoding-factor}, with $Y=Z$.  Every constant used in
the supplier definition, block mass, encoding, shifted convolution, and
growing-weight tail is absolute.  The fixed-rank statements containing
an unspecified $O_J$ or $o_J$ are not invoked in the moving-rank proof.
The complete losses used at moving rank are bounded as follows.
\begin{center}
\small
\renewcommand{\arraystretch}{1.14}
\begin{tabularx}{\textwidth}{@{}>{\raggedright\arraybackslash}p{0.27\textwidth}>{\raggedright\arraybackslash}p{0.27\textwidth}>{\raggedright\arraybackslash}X@{}}
\toprule
loss & source & upper order and normalization \\
\midrule
$T$ & flooring $K_0/T$ &
$\sA\sqrt{L/L_2}=o(\eta\mathcal R)$ \\
$\log B_J$ & labelled set partitions &
$J\log J=O(L_2)=o(\eta\mathcal R)$ \\
$J\log\mathcal L_J(x,y)$ & block reciprocal masses &
$O(JL_3)=O(L_2)=o(\eta\mathcal R)$ \\
$J\log(1+L/T)$ & pointwise source assignment &
$O(L_2^2/\sA)=o(\eta\mathcal R)$ \\
$J^2$ and $J^2\log_2(3J)$ & repeated labels and power-source data &
$O(L_2^2\log_4 x/\sA^2)=o(\eta\mathcal R)$ \\
$3J\log[CJ(1+J+L/T)]$ & type word, source order, padding and slot map &
$O(L_2^2/\sA)=o(\eta\mathcal R)$ \\
$CJ/y$ & power-source reciprocal mass &
$o(1)=o(\eta\mathcal R)$ \\
$\log\mathcal E_J(x;y,T)$ & complete aggregate encoding &
$O(L_2^2/\sA)=o(\eta\mathcal R)$ \\
$J\{\log_2(3Z)+v/\log v\}$ & $\tau_J$ Euler-product loss &
$o(\eta\mathcal R)$ by \eqref{eq:moving-tail-error} \\
$JK$ & source threshold $Z^{2J}$ &
$o(\eta L)$ \\
\bottomrule
\end{tabularx}
\end{center}
In particular, if
\begin{align}\label{eq:dependence-ledger-definition}
 \epsilon_{\rm dep}(x)
 ={}&\frac{T+\log B_J+J\log\mathcal L_J(x,y)
       +J\log(1+L/T)}{\eta\mathcal R}\notag\\
 &+\frac{J^2+J^2\log_2(3J)
       +3J\log[CJ(1+J+L/T)]
       +CJ/y}{\eta\mathcal R}\notag\\
 &+\frac{J}{\eta\mathcal R}
       \left(\log_2(3Z)+\frac{v_{\rm src}}{\log v_{\rm src}}\right)
 +\frac{JK}{\eta L},
\end{align}
then $\epsilon_{\rm dep}(x)\to0$, and
\[
 \log\mathcal E_J(x;y,T)
 \le \epsilon_{\rm dep}(x)\eta\mathcal R.
\]
\end{proposition}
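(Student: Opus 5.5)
The proof is a bookkeeping verification: it bounds each row of the table separately using the asymptotics of Lemma~\ref{lem:moving-bookkeeping}, and then assembles the rows into the bound for $\log\mathcal E_J(x;y,T)$. First I record the basic orders that feed every row:
\[
 T=(1+o(1))\sA\sqrt{L/L_2},\qquad J\asymp L_2/\sA,\qquad \eta\mathcal R=\sA^{-1/2}\sqrt{LL_2},
\]
together with $L/T\asymp \sqrt{LL_2}/\sA$, so that $\log(1+L/T)\asymp L_2$, and $\log J=O(L_3)$. The claim that all constants are absolute is then a matter of inspection. Lemmas~\ref{lem:shifted-divisor-convolution}, \ref{lem:source-block-mass}, \ref{lem:multiset-supplier-mass}, \ref{lem:supplier-encoding-injective}, \ref{lem:growing-weight-tail} and Proposition~\ref{prop:supplier-aggregate-tail} were all stated "for every integer $J\ge1$" with explicit $J$-dependence. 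The only $O_J$/$o_J$ statements are Lemma~\ref{lem:rank-scales}, Lemma~\ref{lem:low-rank-compression}, Proposition~\ref{prop:high-rank-correlation} and the fixed-rank theorems, and these are simply not cited.

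Next I go through the rows one at a time, comparing each to $\eta\mathcal R$, which is of size $\sqrt{L}$ up to powers of $L_2$. Every row except the last two is at most polylogarithmic in $L$, so the comparison is immediate.
\begin{itemize}
\item $T=o(\eta\mathcal R)$ was shown in \eqref{eq:moving-overheads}.
\item $\log B_J\le J\log J=O(L_2L_3/\sA)=O(L_2)$.
\item $\log\mathcal L_J(x,y)=\log\bigl(C((1+\log_2 3x)e^{2J/y}+J)\bigr)=O(L_3)$, since $J/y\to0$; hence $J\log\mathcal L_J=O(JL_3)=O(L_2)$.
\item The pointwise row is $J\log(1+L/T)=O(L_2^2/\sA)$.
\item The $J^2$ rows are $O(L_2^2\log_4 x/\sA^2)$.
\item $3J\log[CJ(1+J+L/T)]=O(J L_2)=O(L_2^2/\sA)$.
\item $CJ/y=o(1)$.
\end{itemize}
Each of these is $L^{o(1)}$ and therefore $o(\eta\mathcal R)$.

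The last two rows are not polylogarithmic and need the lemmas directly. The $\tau_J$ Euler-product row is exactly \eqref{eq:moving-tail-error}, applied with $H=H_{\rm src}$. This $H$ is admissible, because $\log H_{\rm src}=\tfrac12L-o(\eta L)$ by Lemma~\ref{lem:moving-admissibility}, so there is a valid $\delta_H\to0$. The final row, $JK=o(\eta L)$, is \eqref{eq:moving-overheads}.

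Summing these normalized quantities shows that each summand in $\epsilon_{\rm dep}(x)$ tends to $0$, hence so does $\epsilon_{\rm dep}(x)$. Finally, from \eqref{eq:formal-encoding-factor},
\[
 \log\mathcal E_J(x;y,T)\le 3J\log[CJ(1+J+\log(2x)/T)]+CJ/y .
\]
Since $\log(2x)\le L+1$, this is at most the corresponding terms of $\epsilon_{\rm dep}(x)\eta\mathcal R$ after enlarging $C$ absolutely. All the other summands of $\epsilon_{\rm dep}$ are nonnegative for large $x$, which gives the asserted inequality. The main obstacle is not any single estimate but consistency: I have to check that the constant $C$ in the ledger dominates the one from Lemma~\ref{lem:supplier-encoding-injective}, and that $v_{\rm src}$ meets the hypotheses behind \eqref{eq:moving-tail-error}. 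Both follow from the explicit forms already in hand.
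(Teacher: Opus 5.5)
Your proposal is correct and follows the paper's own argument. The paper likewise compares each row with $\eta\mathcal R=\sqrt{LL_2/\sA}$ via $J\asymp L_2/\sA$ and $T\asymp\sA\sqrt{L/L_2}$, takes the $\tau_J$ row from \eqref{eq:moving-tail-error} (with admissibility from Lemma~\ref{lem:moving-admissibility}) and the $JK$ row from \eqref{eq:moving-overheads}, and reads the encoding row off \eqref{eq:formal-encoding-factor}. One slip in wording: the $T$ row is $L^{1/2+o(1)}$, not $L^{o(1)}$, though you already dispose of it correctly by citing \eqref{eq:moving-overheads}, and replacing $\log(2x)$ by $L$ in the last step costs only $3J\log(1+\log 2/T)=o(1)$, so no enlargement of $C$ is even needed.
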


\begin{proof}
Use $J\asymp L_2/\sA$, $T\asymp\sA\sqrt{L/L_2}$,
$K\asymp\mathcal R$, and
$\eta\mathcal R=\sqrt{LL_2/\sA}$.  The Bell estimate
$B_J\le J^J$ gives the second row.  Since
$\mathcal L_J(x,y)\ll L_2+J$ and
$\log(1+L/T)=O(L_2)$, the next two rows follow.  The two largest purely
combinatorial orders are at most $L_2^2/\sA$ up to a factor
$O(\log_4 x/\sA)$, and
\[
 \frac{L_2^2/\sA}{\sqrt{LL_2/\sA}}
 =\frac{L_2^{3/2}}{\sqrt{\sA L}}\longrightarrow0.
\]
Formula \eqref{eq:formal-encoding-factor} gives the encoding row.
The weighted-Euler-product row is
\eqref{eq:moving-tail-error}, with admissibility supplied by
Lemma~\ref{lem:moving-admissibility}.  Finally,
$JK=o(\eta L)$ is \eqref{eq:moving-overheads}.  Summing the normalized
bounds proves the proposition.
\end{proof}

\begin{lemma}[Named moving error functions]
\label{lem:named-moving-errors}
Put
\[
 H_{\rm src}=2^{-J}x^{1/2}Z^{-2J},\qquad
 v_{\rm src}=\frac{\log H_{\rm src}}K,
\]
and put
\[
 u_{\rm tr}^{-}=\frac{L-3K+\log 2}{2K},
 \qquad
 u_{\rm tr}^{+}=\frac{L-K+\log 2}{K}.
\]
For a sufficiently large absolute constant $C$, define
\begin{equation}\label{eq:epsilon-tr}
 \epsilon_{\rm tr}(x)
 =\frac{C}{\eta\mathcal R}
 \left(L_2+\log_2(3Z^2)
       +\frac{u_{\rm tr}^{+}}{\log u_{\rm tr}^{+}}\right).
\end{equation}
Then $\epsilon_{\rm tr}(x)\to0$.  Moreover
\begin{equation}\label{eq:named-moving-margins}
 v_{\rm src}\log v_{\rm src}\ge K+3\eta\mathcal R,
 \qquad
 u_{\rm tr}^{-}\log u_{\rm tr}^{-}\ge K+3\eta\mathcal R
\end{equation}
for all sufficiently large $x$.
\end{lemma}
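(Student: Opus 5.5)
The plan is to treat the three assertions separately and reduce each to the bookkeeping already done in Lemmas~\ref{lem:moving-bookkeeping} and~\ref{lem:moving-admissibility}. Throughout we use the asymptotics $K\sim\tfrac12\mathcal R$, $K=(\tfrac12-2\eta+o(\eta))\mathcal R$, $JK=o(\eta L)$, and $\eta\mathcal R=\sqrt{LL_2/\sA}$.

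\emph{The decay of $\epsilon_{\rm tr}$.} Each term in \eqref{eq:epsilon-tr} is compared with $\eta\mathcal R$.
\begin{itemize}
\item The term $L_2$ is harmless, since $L_2/\eta\mathcal R=\sqrt{L_2\sA/L}\to0$.
\item The term $\log_2(3Z^2)=\log(2K+\log3)=O(\log L)$ is of the same type.
\item Lemma~\ref{lem:moving-admissibility} gives $u_{\rm tr}^+\asymp L/K\asymp\sqrt{L/L_2}$ and $\log u_{\rm tr}^+\sim\tfrac12L_2$. Hence $u_{\rm tr}^+/\log u_{\rm tr}^+\asymp\sqrt{L}/L_2^{3/2}$, whose ratio to $\eta\mathcal R$ is $\sqrt{\sA}/L_2^{2}\to0$.
\end{itemize}
Multiplying by the fixed constant $C$ does not affect any of these limits, so $\epsilon_{\rm tr}(x)\to0$.

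\emph{The source margin.} I would apply \eqref{eq:moving-tail-margin} of Lemma~\ref{lem:moving-bookkeeping} with $H=H_{\rm src}$. The hypotheses $3\le H_{\rm src}\le 2x$ hold: $H_{\rm src}>3$ is \eqref{eq:moving-domain-tail}, and $H_{\rm src}\le x^{1/2}$ is trivial. The remaining hypothesis is $\log H_{\rm src}\ge\tfrac12L-\delta(x)\eta L$ for some $\delta(x)\to0$. Since
\[
\log H_{\rm src}=\tfrac12L-2JK-J\log2
\]
and $JK=o(\eta L)$, $J=o(\eta L)$, we may take $\delta(x)=(2JK+J\log2)/(\eta L)$. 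The first inequality of \eqref{eq:named-moving-margins} then follows directly.

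\emph{The transfer margin.} This is the step needing the most care, because $u_{\rm tr}^-$ is normalized by $2K$ rather than by $K$. Write
\[
u_{\rm tr}^-=\frac{L-3K+\log2}{2K}=\frac{L}{2K}\bigl(1-O(K/L)\bigr),
\]
and note $K/L\asymp\sqrt{L_2/L}=o(\eta)$. So $u_{\rm tr}^-$ coincides to relative order $o(\eta)$ with the minimal $v$ treated in the proof of Lemma~\ref{lem:moving-bookkeeping}, namely $v=(\tfrac12-o(\eta))L/K$. Rather than quoting \eqref{eq:moving-tail-margin} formally, I would redo that computation with $c=K/\mathcal R$:
\[
\log u_{\rm tr}^-=\tfrac12L_2-\tfrac12L_3+O(1),
\qquad
u_{\rm tr}^-\log u_{\rm tr}^-=\Bigl(\frac1{4c}+o(\eta)\Bigr)\mathcal R=\bigl(\tfrac12+2\eta+o(\eta)\bigr)\mathcal R.
\]
Comparing with $K=(\tfrac12-2\eta+o(\eta))\mathcal R$ gives a gap of $(4\eta+o(\eta))\mathcal R$, which is at least $3\eta\mathcal R$ for large $x$.

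The main obstacle is controlling the $o(\eta)$ bookkeeping in $\log u$. The term $-\tfrac12L_3$ contributes $-L_3/L_2$ in relative terms, and this must be $o(\eta)$; it is, since $L_3/L_2\ll\sA/L_2=o(\eta)$. The additive $O(1)$ inside $\log u$ contributes $O(1/L_2)=o(\eta)$ in relative terms. The expansion $1/(4c)=\tfrac12+2\eta+O(\eta^2)$ then supplies the required margin.
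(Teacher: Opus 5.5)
Your proposal is correct and follows the paper's proof: the source margin comes from \eqref{eq:moving-tail-margin} with $H=H_{\rm src}$ using $JK=o(\eta L)$, and $\epsilon_{\rm tr}(x)\to0$ follows by comparing each term with $\eta\mathcal R$. For the transfer margin, the paper quotes \eqref{eq:moving-tail-margin} directly with $\log H=\tfrac12(L-3K+\log 2)=\tfrac12L-o(\eta L)$, for which $v=\log H/K$ is exactly $u_{\rm tr}^{-}$, so the normalization by $2K$ is no obstacle; your recomputation $u_{\rm tr}^{-}\log u_{\rm tr}^{-}=(\tfrac12+2\eta+o(\eta))\mathcal R$ is that quotation written out and is equally valid.
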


\begin{proof}
One has
\[
 \log H_{\rm src}
 =\frac12L-2JK-J\log 2
 =\frac12L-o(\eta L)
\]
by Proposition~\ref{lem:moving-dependence-ledger}.  Also
\[
 \frac12(L-3K+\log 2)=\frac12L-o(\eta L).
\]
Apply the final part of Lemma~\ref{lem:moving-bookkeeping} to these two
lower thresholds.  It gives \eqref{eq:named-moving-margins}.  In addition,
$u_{\rm tr}^{+}\ll\sqrt{L/L_2}$, so
\[
 \frac{u_{\rm tr}^{+}/\log u_{\rm tr}^{+}}
      {\eta\mathcal R}\longrightarrow0.
\]
The remaining terms in \eqref{eq:epsilon-tr}
tend to zero by \eqref{eq:moving-tail-error}, while
$L_2/(\eta\mathcal R)\to0$ directly and
$\epsilon_{\rm dep}(x)\to0$ by
Proposition~\ref{lem:moving-dependence-ledger}.
\end{proof}

\begin{corollary}[The tail at the moving parameters]
\label{cor:moving-weight-tail}
Let $1\le j\le J$, let $R=Z$, and suppose $3\le H\le2x$ and
$\log H\ge\frac12L-\delta_H(x)\eta L$ for some
$\delta_H(x)\to0$.  Put $v=\log H/K$ and
\[
 \epsilon_{\rm wt}(H,j;x)=
 \frac{Cj}{\eta\mathcal R}
 \left(\log_2(3Z)+\frac{v}{\log v}\right),
\]
where $C$ is the constant in Lemma~\ref{lem:growing-weight-tail}.  Then
$\epsilon_{\rm wt}(H,j;x)\to0$ uniformly for $1\le j\le J$, and
\begin{equation}\label{eq:moving-weight-tail-explicit}
 \sum_{\substack{m>H\\P^+(m)\le Z}}\frac{\tau_j(m)}m
 \le\exp\{-v\log v+\epsilon_{\rm wt}(H,j;x)\eta\mathcal R\}.
\end{equation}
In particular,
\begin{equation}\label{eq:moving-weight-tail}
 \sum_{\substack{m>H\\P^+(m)\le Z}}\frac{\tau_j(m)}m
 \le \exp\{-K-2\eta\mathcal R\}
\end{equation}
for all sufficiently large $x$.
\end{corollary}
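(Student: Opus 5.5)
The statement to prove is Corollary~\ref{cor:moving-weight-tail}. The plan is to apply Lemma~\ref{lem:growing-weight-tail} with $R=Z$ and weight index $j$. Then $\log R=K$, so the Rankin parameter $\log H/\log R$ is exactly $v=\log H/K$.

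\textbf{Step 1: the domain of Lemma~\ref{lem:growing-weight-tail}.} We need $v\ge3$ and $\log v\le\tfrac12\log Z=K/2$. By hypothesis $\log H\ge\tfrac12L-\delta_H(x)\eta L$ and $\log H\le\log(2x)$, so $v\asymp L/K\asymp\sqrt{L/L_2}$. This is the same computation that gave \eqref{eq:moving-domain-tail} in Lemma~\ref{lem:moving-admissibility}. Hence $v\to\infty$ and $\log v=O(L_2)=o(K)$, so both conditions hold for large $x$, uniformly in $H$ in the stated range and in $j$.

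\textbf{Step 2: the explicit bound.} The lemma gives
\[
 \sum_{m>H,\ P^+(m)\le Z}\frac{\tau_j(m)}m\le\exp\Bigl\{-v\log v+Cj\Bigl(\log_2(3Z)+\frac v{\log v}\Bigr)\Bigr\}.
\]
The error term here is literally $\epsilon_{\rm wt}(H,j;x)\eta\mathcal R$, which proves \eqref{eq:moving-weight-tail-explicit}.

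\textbf{Step 3: uniformity of $\epsilon_{\rm wt}$.} The quantity $\epsilon_{\rm wt}$ is increasing in $j$, so it suffices to treat $j=J$. We also need the bound uniformly in $H$. Since $v\ll\sqrt{L/L_2}$ uniformly and $v/\log v$ is increasing once $v\ge e$, we may replace $v$ by its maximal value. Then \eqref{eq:moving-tail-error} of Lemma~\ref{lem:moving-bookkeeping} gives $J(\log_2(3Z)+v/\log v)=o(\eta\mathcal R)$ uniformly. Concretely, $J\log_2(3Z)\ll L_2^2/\sA$ and $Jv/\log v\ll\sA^{-1}\sqrt{L/L_2}$, and both are $o(\sqrt{LL_2/\sA})$.

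\textbf{Step 4: the final inequality.} By \eqref{eq:moving-tail-margin}, the hypothesis on $H$ gives $v\log v\ge K+3\eta\mathcal R$. Combining this with $\epsilon_{\rm wt}\le1$ for large $x$, the exponent is at most $-K-3\eta\mathcal R+\eta\mathcal R=-K-2\eta\mathcal R$, which is \eqref{eq:moving-weight-tail}.

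The only delicate point is the uniformity over $H$ and $j$ in Step 3. The margin inequality was established in Lemma~\ref{lem:moving-bookkeeping} only at the worst admissible $v$. I would therefore note that $v\log v$ is increasing in $H$, so the lower endpoint $\log H=\tfrac12L-\delta_H\eta L$ is indeed the worst case. Everything else is a direct substitution of the bookkeeping lemma.
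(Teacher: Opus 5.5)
Your proposal is correct and follows the paper's proof. You apply Lemma~\ref{lem:growing-weight-tail} with $R=Z$, so that its error term is literally $\epsilon_{\rm wt}\eta\mathcal R$, take uniform smallness of $\epsilon_{\rm wt}$ from \eqref{eq:moving-tail-error}, and combine the margin \eqref{eq:moving-tail-margin} with $\epsilon_{\rm wt}\le1$; your explicit checks of the lemma's domain and of monotonicity in $j$ and $H$ are details the paper leaves implicit.
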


\begin{proof}
The explicit estimate is Lemma~\ref{lem:growing-weight-tail}.  Uniform
convergence of $\epsilon_{\rm wt}$ follows from
\eqref{eq:moving-tail-error}.  Lemma~\ref{lem:moving-bookkeeping} gives
$v\log v\ge K+3\eta\mathcal R$, and the final assertion follows.
\end{proof}

The next estimate replaces Proposition~\ref{prop:b4-general} when the
outer cutoff is $Z=e^K$, well beyond the original saddle.

\begin{proposition}[The large-prime transfer at moving rank]
\label{prop:moving-b4}
With the moving parameters above,
\begin{equation}\label{eq:moving-b4}
 \sum_{Z<p\le(2xZ)^{1/2}}
 \Psi_\tau\left(\frac{2xZ}{p^2},p\right)
 \ll x\exp\{-K+\epsilon_{\rm tr}(x)\eta\mathcal R\}.
\end{equation}
\end{proposition}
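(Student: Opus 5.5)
We follow the proof of Proposition~\ref{prop:b4-general}, with $y$ replaced by $Z=e^K$ and $T$ by $K$. Write $X_p=2xZ/p^2$ and split the range of $p$ at $Z^2$. Lemma~\ref{lem:moving-admissibility} gives $3Z^3<2x$, so $X_p\ge3$ throughout $Z<p\le Z^2$. That keeps every application of Lemma~\ref{lem:weighted-smooth} inside its $X$-range.

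\emph{Range $p>Z^2$.} Here we use only the divisor bound $\sum_{m\le X}\tau(m)\ll X\log(2X+2)$. Summing against $\sum_{p>Z^2}p^{-2}\ll Z^{-2}$ gives a contribution $\ll xZ\log x\cdot Z^{-2}=x\exp\{-K+\log L+O(1)\}$. Since $\log L\le L_2\le\epsilon_{\rm tr}(x)\eta\mathcal R/C$ by \eqref{eq:epsilon-tr}, this is admissible provided the constant $C$ in $\epsilon_{\rm tr}$ is large.

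\emph{Range $Z<p\le Z^2$.} Put $u_p=\log X_p/\log p$. Since $\log X_p=L+K+\log2-2\log p$ and $K<\log p\le 2K$, we get $u_{\rm tr}^-\le u_p\le u_{\rm tr}^+$, with $u_{\rm tr}^\pm$ as in Lemma~\ref{lem:named-moving-errors}. By \eqref{eq:moving-domain-transfer}, $u_p\ge u_0$ and $u_p\le Z^{1/2}\le p^{1/2}$. So the general form \eqref{eq:weighted-tail-general} applies and gives
\[
\Psi_\tau(X_p,p)\le X_p\exp\Bigl\{-u_p\log u_p+O\Bigl(\log_2(3Z^2)+\frac{u_{\rm tr}^+}{\log u_{\rm tr}^+}\Bigr)\Bigr\}.
\]
The bracketed error is at most $\epsilon_{\rm tr}(x)\eta\mathcal R$ by the definition \eqref{eq:epsilon-tr}, again taking $C$ large. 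Because $u\log u$ is increasing, $u_p\log u_p\ge u_{\rm tr}^-\log u_{\rm tr}^-\ge K+3\eta\mathcal R$ by \eqref{eq:named-moving-margins}. Summing $X_p$ over $p>Z$ gives $\ll xZ\cdot Z^{-1}=x$. Hence this range contributes
\[
\ll x\exp\{-K-3\eta\mathcal R+\epsilon_{\rm tr}(x)\eta\mathcal R\},
\]
which is stronger than required.

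\emph{Main obstacle.} The delicate step is the margin: the lower bound $u_p\log u_p\ge K+3\eta\mathcal R$ must hold uniformly at the worst endpoint $p=Z^2$. At rank one this came for free, because $V\log V/G\to\infty$; here $u\log u$ and $K$ agree to leading order $\tfrac12\mathcal R$. The argument therefore relies on the second-order comparison in Lemma~\ref{lem:moving-bookkeeping}, namely $v\log v\ge(\tfrac12+2\eta+o(\eta))\mathcal R$ against $K=(\tfrac12-2\eta+o(\eta))\mathcal R$. That comparison has already been packaged as \eqref{eq:named-moving-margins}. So the work left is to check that the lower threshold $\tfrac12(L-3K+\log2)$ satisfies \eqref{eq:moving-H}, which follows from $K=o(\eta L)$. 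The remaining requirement is that the Rankin error in \eqref{eq:weighted-tail-general} is dominated by $\epsilon_{\rm tr}\eta\mathcal R$, which holds by the definition \eqref{eq:epsilon-tr}. Adding the two ranges proves \eqref{eq:moving-b4}.
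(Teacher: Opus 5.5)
Your proposal is correct and follows essentially the same route as the paper. You split at $p=Z^2$, use the elementary divisor-sum bound above $Z^2$, and on $Z<p\le Z^2$ apply \eqref{eq:weighted-tail-general}, taking the margin $u_{\rm tr}^-\log u_{\rm tr}^-\ge K+3\eta\mathcal R$ from Lemma~\ref{lem:named-moving-errors} and absorbing the Rankin error into the definition \eqref{eq:epsilon-tr} of $\epsilon_{\rm tr}$; your explicit two-sided bound $u_{\rm tr}^-\le u_p\le u_{\rm tr}^+$, which the paper leaves implicit, is a useful extra check that this error is controlled uniformly in $p$.
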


\begin{proof}
Put $X_p=2xZ/p^2$ and split at $p=Z^2$.  For $p>Z^2$, the elementary
bound $\Psi_\tau(X,p)\ll X\log(2X+2)$ gives
\begin{align*}
 \sum_{p>Z^2}\Psi_\tau(X_p,p)
 &\ll xZL\sum_{p>Z^2}\frac1{p^2}\\
 &\ll x\exp\{-K+L_2+O(1)\}\\
 &\ll x\exp\{-K+\epsilon_{\rm tr}(x)\eta\mathcal R\}.
\end{align*}

Now let $Z<p\le Z^2$ and put $u_p=\log X_p/\log p$.  Uniformly in this
range,
\[
 u_p\ge\frac{L-3K+O(1)}{2K}.
\]
The minimum is $u_{\rm tr}^{-}$ at the endpoint $p=Z^2$.
Lemma~\ref{lem:named-moving-errors} therefore gives
\begin{equation}\label{eq:up-moving-margin}
 u_p\log u_p\ge u_{\rm tr}^{-}\log u_{\rm tr}^{-}
 \ge K+3\eta\mathcal R.
\end{equation}
Also $u_p\le p^{1/2}$ and $u_p\to\infty$.  Applying the general form
\eqref{eq:weighted-tail-general} of Lemma~\ref{lem:weighted-smooth}, and
using the uniform bound encoded in \eqref{eq:epsilon-tr}, we obtain
\[
 \Psi_\tau(X_p,p)
 \le X_p\exp\{-K-2\eta\mathcal R\}.
\]
Since $\sum_{p>Z}p^{-2}\ll Z^{-1}$,
\[
 \sum_{Z<p\le Z^2}\Psi_\tau(X_p,p)
 \ll xZ e^{-K-2\eta\mathcal R}\sum_{p>Z}\frac1{p^2}
 \ll xe^{-K-2\eta\mathcal R}.
\]
This proves the proposition.
\end{proof}

\begin{lemma}[Quantitative encoding of large low-rank source products]
\label{lem:moving-low-encoding}
Let the moving parameters be as in
\eqref{eq:moving-parameters}--\eqref{eq:moving-JK}.  Let \(\mathcal D\)
be any family of products
\[
 d=\prod_{\nu=1}^{s}q_\nu^{a_\nu},\qquad s<J,
\]
of distinct local prime powers with \(q_\nu^{a_\nu}\le 2x\), such that
\[
 P^+\!\left(\varphi(q_\nu^{a_\nu})\right)\le Z,
 \qquad
 \Omega_y\!\left(\prod_{\nu=1}^{s}
              \varphi(q_\nu^{a_\nu})\right)<J,
\]
and every local factor has a prime divisor of its totient exceeding \(y\).
Then, for every \(H\ge3\),
\begin{equation}\label{eq:moving-low-encoding}
 \sum_{\substack{d\in\mathcal D\\ d>H}}\frac1d
 \le
 \exp\{O(J/y)\}
 \sum_{\substack{m>2^{-J}HZ^{-2J}\\P^+(m)\le Z}}
       \frac{\tau_J(m)}m.
\end{equation}
The implied constant is absolute.
\end{lemma}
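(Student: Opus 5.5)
The plan mirrors the proof of Proposition~\ref{prop:supplier-aggregate-tail}, but encodes a product $d\in\mathcal D$ directly by its local factors instead of by a supplier system. First order the local factors $q_\nu^{a_\nu}$ of $d$ by increasing base and sort them into two classes.

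\emph{Power factors.} These are the factors with $a_\nu\ge2$. For such a factor $q_\nu^{a_\nu-1}\mid\varphi(q_\nu^{a_\nu})$, and $q_\nu>y$: the base must exceed $y$, because $\varphi(q^a)=q^{a-1}(q-1)$ has a prime factor above $y$ and $q-1<q$. The constraint $P^+(\varphi)\le Z$ then forces $q_\nu\le Z$. Each such factor contributes $a_\nu-1$ occurrences to $\Omega_y$, so $\sum(a_\nu-1)<J$. Hence $\sum a_\nu<2J$, and the product of the power factors satisfies $d_{\rm pow}\le Z^{2J}$.

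\emph{Linear factors.} These are the factors with $a_\nu=1$. Write $r_\nu=q_\nu-1$. Then $P^+(r_\nu)\le Z$ and $1/q_\nu\le 1/r_\nu$. If $d>H$, then the product of the linear primes exceeds $H/d_{\rm pow}\ge HZ^{-2J}$. Since $q\le 2r$ and there are at most $s<J$ linear factors, the product $m=\prod r_\nu$ exceeds $2^{-J}HZ^{-2J}$.

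Next I bound $\sum_{d>H}1/d$ by a sum over pairs consisting of a power tuple and a linear tuple. The map $d\mapsto$ (the set of power factors, the ordered list $(r_\nu)$) is injective, because $d$ is recovered as the product. No slot maps or labels are needed here, which is why the factor $\mathcal E_J$ does not appear.

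\emph{Linear sum.} Drop primality and the distinctness of the bases. Pad the ordered linear tuple with ones to length $J$; the padded tuple has product $m$. For each fixed $m$, the number of such ordered $J$-tuples is at most $\tau_J(m)$. So the linear sum is at most $\sum_{m>2^{-J}HZ^{-2J},\,P^+(m)\le Z}\tau_J(m)/m$.

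\emph{Power sum.} Drop every constraint except $q>y$ and $a\ge2$. Then the power factors contribute at most
\[
 \prod_{q>y}\Bigl(1+\sum_{a\ge2}q^{-a}\Bigr)\le\exp\Bigl\{\sum_{q>y}\tfrac{2}{q^2}\Bigr\}\le\exp\{O(1/y)\}.
\]
I use an unordered set here, not an ordered tuple, to avoid a combinatorial factor. This bound is already $\exp\{O(J/y)\}$.

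The step I expect to need care is keeping the power-factor mass and the threshold split decoupled. The split must use the deterministic bound $d_{\rm pow}\le Z^{2J}$ and not the actual value of $d_{\rm pow}$. Otherwise the linear tail would depend on the power data, and the sum would not factor.

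Once the bound $d_{\rm pow}\le Z^{2J}$ is used, the condition on the linear product depends only on $H$. The double sum then factors into the product of the power mass $\exp\{O(J/y)\}$ and the weighted friable tail, which is \eqref{eq:moving-low-encoding}.
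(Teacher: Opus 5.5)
Your proposal is correct and follows essentially the same route as the paper. Both arguments use the same steps: the deterministic bound $d_{\rm pow}\le Z^{2J}$ from $\sum(a_\nu-1)<J$ and $q_\nu\le Z$; the resulting uniform threshold $m>2^{-J}HZ^{-2J}$ on the linear product; padding by ones to bound the linear sum by $\tau_J(m)$; and factoring off the power-factor mass. The only difference is cosmetic: you sum the power factors as an unordered set over distinct bases, which gives $\exp\{O(1/y)\}$ rather than the paper's $\bigl(1+\sum_{q>y}\sum_{a\ge2}q^{-a}\bigr)^J=\exp\{O(J/y)\}$, a slightly sharper bound than the lemma requires.
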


\begin{proof}
Separate the factors with exponent at least two from the linear factors.
If \(a_\nu\ge2\), then necessarily \(q_\nu>y\): otherwise neither
\(q_\nu\) nor a prime divisor of \(q_\nu-1\) can exceed \(y\).
Moreover \(q_\nu\le Z\), and the occurrence of
\(q_\nu^{a_\nu-1}\) in the totient shows that
\[
 \sum_{a_\nu\ge2}(a_\nu-1)<J.
\]
Consequently the product \(d_0\) of all higher prime-power factors
satisfies
\begin{equation}\label{eq:low-power-product}
 d_0\le Z^{2J}.
\end{equation}
Their total reciprocal mass, summed over every possible collection, is at
most
\[
 \left(1+\sum_{q>y}\sum_{a\ge2}q^{-a}\right)^J
 \le \exp\{O(J/y)\}.
\]

Fix such a collection.  Write the remaining linear factors as distinct
primes \(q_1,\ldots,q_t\), with \(t<J\).  Since
\(P^+(q_i-1)\le Z\), the integers \(r_i=q_i-1\) are \(Z\)-smooth.  If
\(d=d_0q_1\cdots q_t>H\), then, by \eqref{eq:low-power-product},
\[
 r_1\cdots r_t
 >2^{-t}\frac{H}{d_0}
 \ge 2^{-J}HZ^{-2J}.
\]
Also \(q_i^{-1}\le r_i^{-1}\).  Order the \(r_i\)'s increasingly and
pad the resulting tuple with ones to length \(J\).  For a fixed product
\(m\), the number of such padded tuples is no larger than
\(\tau_J(m)\).  Summing first over the linear factors and then over the
higher factors proves \eqref{eq:moving-low-encoding}.
\end{proof}

\begin{corollary}[Moving supplier-system tail]
\label{cor:moving-high-encoding}
With the moving parameters and $H\ge3$,
\begin{equation}\label{eq:moving-high-encoding}
 \sum_{\mathcal P}\sum_{d>H}\frac{w_{\mathcal P}(d)}d
 \le \exp\{\epsilon_{\rm dep}(x)\eta\mathcal R\}
 \sum_{\substack{m>2^{-J}HZ^{-2J}\\P^+(m)\le Z}}
       \frac{\tau_J(m)}m.
\end{equation}
The displayed function $\epsilon_{\rm dep}(x)$ is independent of the
source systems and tends to zero.
\end{corollary}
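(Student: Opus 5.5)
This is Proposition~\ref{prop:supplier-aggregate-tail} specialised to $Y=Z$ and the moving parameters, with its combinatorial factor absorbed by the dependency ledger.

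We start from the exact identity \eqref{eq:aggregate-tail-identity}. It rewrites the left side of \eqref{eq:moving-high-encoding} as a sum of $1/d(\mathfrak s)$ over pairs $(\mathcal P,\mathfrak s)\in\mathscr S_J(H)$, where now $Y=Z$. Nothing in Definition~\ref{def:supplier-system}, Lemma~\ref{lem:source-block-mass}, Lemma~\ref{lem:supplier-encoding-injective} or Proposition~\ref{prop:supplier-aggregate-tail} requires $J$ to be fixed. Those statements hold for every integer $J\ge1$, every $y\ge3$ and every $Y\le2x$, and all their constants are absolute. The moving parameters satisfy $y\ge3$ and $Z<x\le2x$ by Lemma~\ref{lem:moving-admissibility}. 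We may therefore apply Proposition~\ref{prop:supplier-aggregate-tail} directly with $Y=Z$ and $T$ the moving $T$. This gives \eqref{eq:moving-high-encoding} with the prefactor $\mathcal E_J(x;y,T)$ in place of $\exp\{\epsilon_{\rm dep}(x)\eta\mathcal R\}$, and with exactly the displayed friable tail $m>2^{-J}HZ^{-2J}$, $P^+(m)\le Z$.

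It remains to bound this prefactor. By \eqref{eq:formal-encoding-factor},
\[
 \log\mathcal E_J(x;y,T)\le 3J\log[CJ(1+J+\log(2x)/T)]+CJ/y .
\]
Both terms appear among the summands defining $\epsilon_{\rm dep}(x)$ in \eqref{eq:dependence-ledger-definition}, after dividing by $\eta\mathcal R$. Note that $\log(2x)\le L+1$, and the resulting discrepancy can be absorbed into the absolute constant $C$. Proposition~\ref{lem:moving-dependence-ledger} already records $\log\mathcal E_J(x;y,T)\le\epsilon_{\rm dep}(x)\eta\mathcal R$, so the prefactor is at most $\exp\{\epsilon_{\rm dep}(x)\eta\mathcal R\}$. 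The same proposition shows $\epsilon_{\rm dep}(x)\to0$.

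The remaining assertion is that $\epsilon_{\rm dep}$ does not depend on the source systems. This is clear from \eqref{eq:dependence-ledger-definition}: it is built only from $x$ and the explicit parameters $T,J,K,y,Z,\eta,\mathcal R,v_{\rm src}$, none of which depend on $\mathcal P$, $\mathfrak s$ or $H$.

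The only delicate point is checking that no step inside the proofs of Lemma~\ref{lem:supplier-encoding-injective} and Proposition~\ref{prop:supplier-aggregate-tail} silently uses $J$ fixed. The two places to check are the slot count and the power-source mass. The slot count $[CJM_J(x,T)]^{3J}$ is explicit in $J$. The power-source mass $(1+\sum_{q>y}\sum_{e\ge1}q^{-e-1})^J\le\exp(CJ/y)$ holds uniformly because $\sum_{q>y}q^{-2}\ll1/y$. With both confirmed, the corollary follows.
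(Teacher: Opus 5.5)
Your proposal is correct and follows the paper's proof: apply Proposition~\ref{prop:supplier-aggregate-tail} with $Y=Z$, then bound the encoding factor by $\log\mathcal E_J(x;y,T)\le\epsilon_{\rm dep}(x)\eta\mathcal R$ using Proposition~\ref{lem:moving-dependence-ledger}. Your additional checks are sound and make explicit what the paper leaves implicit: the hypotheses $y\ge3$ and $Z\le2x$ from Lemma~\ref{lem:moving-admissibility}, uniformity in $J$, the harmless $\log(2x)$ versus $L$ discrepancy, and the independence of $\epsilon_{\rm dep}$ from $H$ and the source systems.
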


\begin{proof}
Apply Proposition~\ref{prop:supplier-aggregate-tail} with $Y=Z$.
The bound for its encoding factor is
$\log\mathcal E_J(x;y,T)\le
\epsilon_{\rm dep}(x)\eta\mathcal R$ by
Proposition~\ref{lem:moving-dependence-ledger}.
\end{proof}

\begin{proposition}[Moving low-rank compression]
\label{prop:moving-low-rank}
Uniformly for $x\le X\le2x$,
\begin{multline}\label{eq:moving-low-rank}
 \#\{N\le X:P^+(\varphi(N))\le Z,
              \ \Omega_y(\varphi(N))<J\}\\
 \ll X\exp\{-K-c\eta\mathcal R\}
\end{multline}
for some absolute $c>0$ and all sufficiently large $x$.
\end{proposition}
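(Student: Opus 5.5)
The plan is to mimic the proof of Lemma~\ref{lem:low-rank-compression}, replacing each $o_J(V)$ loss by an explicit $o(\eta\mathcal R)$ loss.

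\emph{Decomposition.} For $N\le X$ counted on the left of \eqref{eq:moving-low-rank}, let $d$ be the product of the exact prime powers $q^a\Vert N$ for which $\varphi(q^a)$ has a prime factor exceeding $y$. Write $N=dm$. Since $\Omega_y(\varphi(N))<J$, there are fewer than $J$ such local factors. By multiplicativity, $P^+(\varphi(m))\le y$. The product $d$ then lies in a family $\mathcal D$ exactly as in Lemma~\ref{lem:moving-low-encoding}, with $Y=Z$. The count is at most $\sum_{d\in\mathcal D}\Phi(X/d,y)$. Split at $d\le x^{1/2}$ versus $d>x^{1/2}$.

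\emph{Small $d$.} For $d\le x^{1/2}$ we have $x^{1/2}\le X/d\le 2x$. Lemma~\ref{lem:bfps-uniform}(ii) then gives
\[
\Phi(X/d,y)\le (X/d)\exp\{-E+\epsilon_{\rm BFPS}(x)\eta\mathcal R\},
\]
since $y=e^{T}$ with $T=T_{\rm mov}$. It remains to bound the total reciprocal mass $\sum_{d\in\mathcal D}1/d$.
\begin{itemize}
\item Higher prime powers contribute $\exp\{O(J/y)\}$, as in the proof of Lemma~\ref{lem:moving-low-encoding}.
\item For a linear factor $q$, classify $q-1$ by its multiset of prime factors in $(y,Z]$, of size $s<J$. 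Lemma~\ref{lem:reciprocal-BT} gives mass $\ll L_2\,(\sum_{y<p\le Z}1/p)^s/s!$-type bounds. Here $\sum_{y<p\le Z}1/p=\log(K/T)+o(1)=\log J+o(1)$.
\item Taking products over fewer than $J$ factors, the total is at most $\exp\{O(J\log(JL_2))\}$.
\end{itemize}
This is $O(L_2^2/\sA)=o(\eta\mathcal R)$ by the ledger of Proposition~\ref{lem:moving-dependence-ledger}. Since $E=K_0+\eta\mathcal R\ge K+\eta\mathcal R$, this range contributes $X\exp\{-K-\eta\mathcal R+o(\eta\mathcal R)\}$.

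\emph{Large $d$.} For $d>x^{1/2}$, use the trivial bound $\Phi(X/d,y)\le X/d$. Apply Lemma~\ref{lem:moving-low-encoding} with $H=x^{1/2}$. The threshold becomes $2^{-J}x^{1/2}Z^{-2J}=H_{\rm src}$, so
\[
\sum_{d>x^{1/2}}\frac1d\le \exp\{O(J/y)\}\sum_{\substack{m>H_{\rm src}\\ P^+(m)\le Z}}\frac{\tau_J(m)}{m}.
\]
Corollary~\ref{cor:moving-weight-tail} applies with $\log H_{\rm src}=\tfrac12L-o(\eta L)$, which holds by Lemma~\ref{lem:named-moving-errors}. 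It bounds this tail by $\exp\{-K-2\eta\mathcal R\}$. Adding the two ranges gives \eqref{eq:moving-low-rank} with, say, $c=1/2$.

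\emph{Main obstacle.} The delicate point is the linear-factor reciprocal mass when $J$ grows. The per-block count must not introduce a factor like $J^J$ times $\mathcal L^J$ that exceeds $\eta\mathcal R$. A crude bound of the form $(C\,L_2\log J)^{J}$ still gives logarithm $O(J\log L_2)=O(L_2^2/\sA)$, which is acceptable. I would first try to bound the mass by $\prod(1+\text{block mass})$ summed over at most $J$ factors and check it against the ledger.
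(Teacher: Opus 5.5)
Your proposal is correct and follows the paper's proof essentially step for step: the same factorization $N=dm$ split at $d\le x^{1/2}$, Lemma~\ref{lem:bfps-uniform}\textup{(ii)} combined with a Brun--Titchmarsh/Mertens bound on $\sum_d 1/d$ for small $d$ (using $E-K\ge\eta\mathcal R$), and Lemma~\ref{lem:moving-low-encoding} with Corollary~\ref{cor:moving-weight-tail} at $H=x^{1/2}$ for large $d$. One small correction: your $S^s/s!$ bound over multisets is only heuristic, since repeated primes make the multiset sum at least $S^s/s!$, not at most; but the sum over all multisets equals $\prod_{y<p\le Z}(1-1/p)^{-1}\ll J$, so the linear-factor mass is $\ll L_2J$ and your total $\exp\{O(J\log(JL_2))\}$ still holds (the paper instead uses the cruder per-factor bound $L_2(1+S)^J$, which also fits the ledger).
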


\begin{proof}
For $N=\prod q^a$, let $d$ be the product of all full prime powers
$q^a\Vert N$ for which $\varphi(q^a)$ has a prime factor exceeding $y$.
There are fewer than $J$ such local factors, and, on writing $N=dm$,
\[
 P^+(\varphi(m))\le y.
\]

We first bound the reciprocal mass of the possible $d$.  For a linear
factor $q$, let $\mathcal B$ be the complete multiset of prime factors of
$q-1$ lying in $(y,Z]$.  Its cardinality is less than $J$.  If
$D_B=D(\mathcal B)$, Brun--Titchmarsh and partial summation give, uniformly
in $D_B$,
\[
 \sum_{\substack{q\le2x\\q\ \mathrm{prime}\\D_B\mid q-1}}\frac1q
 \ll \frac{L_2}{\varphi(D_B)}
 \ll \frac{L_2}{D_B},
\]
because $D_B/\varphi(D_B)\le\exp(O(J/y))\le2$.  Moreover,
\[
 S:=\sum_{y<p\le Z}\frac1p\ll 1+\log\frac{K}{T}
 \ll 1+\log(2J).
\]
Therefore the reciprocal mass of all possible linear local factors is at
most
\begin{equation}\label{eq:moving-one-linear-mass}
 Q\ll L_2\sum_{0<r<J}S^r
 \le L_2(1+S)^J.
\end{equation}
For a higher prime power, $q>y$, $q\le Z$, and its exponent satisfies
$a\le J$, because $q^{a-1}\mid\varphi(q^a)$.  Hence the reciprocal mass
of all higher prime-power local factors is $O(1)$.  It follows that
\begin{equation}\label{eq:moving-d-mass}
 \sum_d\frac1d\le(1+Q+O(1))^J
 \le\exp\{C\epsilon_{\rm dep}(x)\eta\mathcal R\}.
\end{equation}

If $d\le x^{1/2}$, then $X/d\ge x^{1/2}$.  Lemma~\ref{lem:bfps-uniform}\textup{(ii)} gives, uniformly for
$x^{1/2}\le W\le2x$,
\begin{equation}\label{eq:bfps-half-range}
 \Phi(W,y)
 \le W\exp\{-E+\epsilon_{\rm BFPS}(x)\eta\mathcal R\}.
\end{equation}  Summing \eqref{eq:bfps-half-range} with
\eqref{eq:moving-d-mass} gives exponent
\[
 -E+\{\epsilon_{\rm BFPS}(x)+C\epsilon_{\rm dep}(x)\}
       \eta\mathcal R.
\]
Since $E-K\ge\frac12\eta\mathcal R$ for large $x$ and both named
error functions tend to zero, this is at most
$-K-\tfrac14\eta\mathcal R$ for all sufficiently large $x$.

It remains to treat $d>x^{1/2}$.  Apply
Lemma~\ref{lem:moving-low-encoding} with $H=x^{1/2}$.  It gives
\[
 \sum_{d>x^{1/2}}\frac1d
 \le e^{O(J/y)}
 \sum_{\substack{m>H_0\\P^+(m)\le Z}}
       \frac{\tau_J(m)}m,
 \qquad
 H_0=2^{-J}x^{1/2}Z^{-2J}.
\]
By Lemma~\ref{lem:moving-bookkeeping},
$\log H_0=\frac12L-o(\eta L)$, so Corollary
\ref{cor:moving-weight-tail} bounds the last expression by
$e^{-K-2\eta\mathcal R}$.  Since $J/y=o(1)$, counting multiples of $d$
completes the proof.
\end{proof}

The high-rank argument has the same main term as in fixed rank, but the
large-supplier tail must again be aggregated with $\tau_J$.

\begin{proposition}[Moving high-rank correlation]
\label{prop:moving-high-rank}
Uniformly for $1\le h\le y$,
\begin{multline}\label{eq:moving-high-rank}
 \#\{n\le x:\varphi(n)=\varphi(n+h)=M,\ P^+(M)\le Z,\\
                   \Omega_y(M)\ge J\}
 \ll x\exp\{-K+C\epsilon_{\rm dep}(x)\eta\mathcal R\}
\end{multline}
for an absolute constant $C$.
\end{proposition}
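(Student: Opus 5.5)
We follow the proof of Proposition~\ref{prop:high-rank-correlation} with $Y=Z$, now tracking every dependence on $J$ explicitly. For each solution $n$ with common value $M$, $P^+(M)\le Z$ and $\Omega_y(M)\ge J$, select the first $J$ prime occurrences of $M$ above $y$ in nondecreasing order, breaking ties by multiplicity index. This gives a labelled $J$-tuple $\mathcal P$ in $(y,Z]$ with $D(\mathcal P)\mid\varphi(n)$ and $D(\mathcal P)\mid\varphi(n+h)$. By Lemma~\ref{lem:supplier-coverage}, applied to $N=n$ and $N=n+h$ (both $\le2x$), the indicator of the solution is at most $F_{\mathcal P}(n)F_{\mathcal P}(n+h)$. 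Every prime of the weight support exceeds $y\ge h$, so Lemma~\ref{lem:shifted-divisor-convolution} gives
\[
 \sum_{\mathcal P}\Bigl(2x\Lambda(\mathcal P)^2
 +4x\|F_{\mathcal P}\|_{\infty,2x}\Lambda_{>\sqrt x}(\mathcal P)\Bigr)
\]
as an upper bound for the count.

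\emph{Main term.} By Lemma~\ref{lem:multiset-supplier-mass},
\[
 \Lambda(\mathcal P)\le B_J\mathcal L_J(x,y)^J/D(\mathcal P).
\]
Hence
\[
 \sum_{\mathcal P}\Lambda(\mathcal P)^2\le B_J^2\mathcal L_J^{2J}\Bigl(\sum_{p>y}p^{-2}\Bigr)^J\le B_J^2\mathcal L_J^{2J}(C/y)^J.
\]
The factor $(C/y)^J$ equals $e^{-JT+O(J)}=e^{-K+O(J)}$. The losses $2\log B_J$, $2J\log\mathcal L_J$ and $O(J)$ each appear as rows of the ledger in Proposition~\ref{lem:moving-dependence-ledger} (the $O(J)$ is dominated by the $J^2$ row). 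They are therefore at most $C\epsilon_{\rm dep}(x)\eta\mathcal R$.

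\emph{Large-divisor term.} First bound the pointwise factor by Lemma~\ref{lem:multiset-supplier-mass}:
\[
 \|F_{\mathcal P}\|_{\infty,2x}\le B_J(1+L/T)^J,
\]
using $W_y(N)\le\log(2x)/T$. Its logarithm is covered by the rows for $\log B_J$ and $J\log(1+L/T)$. Then Corollary~\ref{cor:moving-high-encoding} with $H=\sqrt x$ bounds $\sum_{\mathcal P}\Lambda_{>\sqrt x}(\mathcal P)$ by
\[
 e^{\epsilon_{\rm dep}\eta\mathcal R}\sum_{m>H_{\rm src},\,P^+(m)\le Z}\tau_J(m)/m .
\]
Since $\log H_{\rm src}=\tfrac12L-o(\eta L)$ (Lemma~\ref{lem:named-moving-errors}), Corollary~\ref{cor:moving-weight-tail} with $j=J$ bounds this friable tail by $e^{-K-2\eta\mathcal R}$. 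The whole large-divisor contribution is therefore at most
\[
 x\exp\{-K-2\eta\mathcal R+C\epsilon_{\rm dep}\eta\mathcal R\},
\]
which is even better than needed.

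\emph{Main obstacle.} The delicate point is the main term. A single explicit constant must absorb $B_J^2$, $\mathcal L_J^{2J}$, and the $C^J$ coming from $\sum_{p>y}p^{-2}\le C/y$. Each of these is $e^{O(J\log J+JL_3)}$, and we need it to be $\le e^{C\epsilon_{\rm dep}\eta\mathcal R}$. This requires checking that each quantity is literally one of the summands defining $\epsilon_{\rm dep}$ (or bounded by a constant multiple of one), rather than relying on an unspecified $o_J$. Here we use $J\le J^2$ and the identity $\log\mathcal L_J$ appearing in the definition. With that matching done, summing the two terms gives \eqref{eq:moving-high-rank}.
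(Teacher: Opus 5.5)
Your proposal is correct and follows the paper's proof essentially step for step. The shared steps are: the canonical labelled tuple from the first $J$ large-prime occurrences, supplier coverage on both $n$ and $n+h$, the shifted divisor convolution, and the ledger-absorbed bounds for $\Lambda(\mathcal P)$ and $\|F_{\mathcal P}\|_{\infty,2x}$. The large-divisor term is then handled by Corollary~\ref{cor:moving-high-encoding} with $H=\sqrt x$, followed by the $\tau_J$-weighted friable tail bound $e^{-K-2\eta\mathcal R}$. Your explicit absorption of the factor $C^J$ from $\sum_{p>y}p^{-2}\le C/y$ into the $J^2$ ledger entry is harmless extra care, where the paper passes directly from $\bigl(\sum_{p>y}p^{-2}\bigr)^J$ to $e^{-JT}$.
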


\begin{proof}
Choose the canonical labelled tuple $\mathcal P$ from the first $J$
large-prime occurrences of the common value.  Lemma~\ref{lem:supplier-coverage}
provides the weight on both shifted integers.  From
Lemma~\ref{lem:multiset-supplier-mass},
\begin{align}
 \Lambda(\mathcal P)
 &\le\frac{\exp\{C\epsilon_{\rm dep}(x)\eta\mathcal R\}}
              {D(\mathcal P)},
 \label{eq:moving-Lambda}\\
 \|F_{\mathcal P}\|_{\infty,2x}
 &\le\exp\{C\epsilon_{\rm dep}(x)\eta\mathcal R\}.
 \label{eq:moving-F-infty}
\end{align}
Indeed the logarithms of the Bell factor, the block masses, and the
pointwise factor are entries in the dependence ledger.

All primes in the support of the divisor weight exceed $y\ge h$.
Lemma~\ref{lem:shifted-divisor-convolution} therefore gives, for each
$\mathcal P$,
\[
 \sum_{n\le x}F_{\mathcal P}(n)F_{\mathcal P}(n+h)
 \le2x\Lambda(\mathcal P)^2
 +4x\|F_{\mathcal P}\|_{\infty,2x}
       \Lambda_{>\sqrt x}(\mathcal P).
\]
The main terms satisfy
\begin{align}
 2x\sum_{\mathcal P}\Lambda(\mathcal P)^2
 &\le x\exp\{C\epsilon_{\rm dep}(x)\eta\mathcal R\}
   \left(\sum_{p>y}p^{-2}\right)^J\notag\\
 &\le x\exp\{-JT+C\epsilon_{\rm dep}(x)\eta\mathcal R\}\notag\\
 &=x\exp\{-K+C\epsilon_{\rm dep}(x)\eta\mathcal R\}.
 \label{eq:moving-high-main}
\end{align}

For the large-divisor terms, Corollary~\ref{cor:moving-high-encoding}
with $H=\sqrt x$ gives
\[
 \sum_{\mathcal P}\Lambda_{>\sqrt x}(\mathcal P)
 \le e^{\epsilon_{\rm dep}(x)\eta\mathcal R}
 \sum_{\substack{m>H_{\rm src}\\P^+(m)\le Z}}
       \frac{\tau_J(m)}m.
\]
By Lemma~\ref{lem:named-moving-errors} and
Lemma~\ref{lem:growing-weight-tail}, the last sum is at most
$e^{-K-2\eta\mathcal R}$ for all sufficiently large $x$.  Multiplying by
\eqref{eq:moving-F-infty} leaves
\[
 x\exp\{-K-\eta\mathcal R\},
\]
which is smaller than \eqref{eq:moving-high-main}.  This proves the
proposition.
\end{proof}

\begin{theorem}[Moving-rank diagonal decomposition]
\label{thm:moving-rank-decomposition}
With the parameters \eqref{eq:moving-parameters}--\eqref{eq:moving-JK},
there is a function $\varepsilon_{\rm mov}(x)\to0$ such that, uniformly
for positive integers $h\le y$,
\begin{equation}\label{eq:moving-rank-decomposition}
 S_h^\varphi(x)=D_{h,>Z}^\varphi(x)
 +O\left(x\exp\{-K+\varepsilon_{\rm mov}(x)\eta\mathcal R\}\right).
\end{equation}
Consequently,
\begin{equation}\label{eq:moving-rank-simplified}
 S_h^\varphi(x)=D_{h,>Z}^\varphi(x)
 +O\left(x\exp\{-(\tfrac12-o(1))\sqrt{\log x\log_2 x}\}\right).
\end{equation}
If $h$ is odd, the diagonal is empty.
\end{theorem}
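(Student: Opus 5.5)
The plan is to mirror the proof of Theorem~\ref{thm:rank-amplification-variable}, replacing each fixed-rank input by its moving-rank counterpart. Fix $h\le y$ and, for each $n\le x$ counted by $S_h^\varphi(x)$, put $M=\varphi(n)=\varphi(n+h)$. We split these $n$ into three disjoint classes:
\[
 \text{(a) } P^+(M)>Z,\qquad
 \text{(b) } P^+(M)\le Z,\ \Omega_y(M)<J,\qquad
 \text{(c) } P^+(M)\le Z,\ \Omega_y(M)\ge J.
\]
Lemma~\ref{lem:moving-admissibility} gives $h\le y<Z\le x$, so each cutoff lies in the range required by the results below.

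For class (a), we apply Proposition~\ref{prop:transfer-diagonal} with cutoff $Z$. The count in this class equals $D_{h,>Z}^\varphi(x)+\mathcal E_h(x;Z)$, where $\mathcal E_h(x;Z)$ is bounded by the elementary terms
\[
 \frac xZ,\quad x^{1/2},\quad \frac{x\log x}{Z},\quad \frac{x(\log Z)^2}{Z}
\]
together with the divisor-weighted sum. Each elementary term is $\ll x\exp\{-K+O(L_2)\}$, because $\log L=L_2$, $\log K=O(L_2)$, and $K=o(L)$; moreover $L_2=o(\eta\mathcal R)$. The divisor-weighted sum is exactly the quantity bounded in Proposition~\ref{prop:moving-b4}, which gives $x\exp\{-K+\epsilon_{\rm tr}(x)\eta\mathcal R\}$.

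For class (b), each such $n$ has $N=n\le x$ counted in Proposition~\ref{prop:moving-low-rank} with $X=x$, which gives $\ll x\exp\{-K-c\eta\mathcal R\}$. For class (c), Proposition~\ref{prop:moving-high-rank} gives $\ll x\exp\{-K+C\epsilon_{\rm dep}(x)\eta\mathcal R\}$. All three bounds are uniform in $h\le y$ and have absolute implied constants, so summing them yields \eqref{eq:moving-rank-decomposition} with
\[
 \varepsilon_{\rm mov}(x)=\epsilon_{\rm tr}(x)+C\epsilon_{\rm dep}(x)+\frac{C'L_2}{\eta\mathcal R}\longrightarrow0,
\]
by Lemma~\ref{lem:named-moving-errors} and Proposition~\ref{lem:moving-dependence-ledger}. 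The absolute constant in the bound is absorbed by enlarging $\varepsilon_{\rm mov}$, since $\eta\mathcal R\to\infty$.

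For \eqref{eq:moving-rank-simplified}, Lemma~\ref{lem:moving-bookkeeping} gives $K=(\tfrac12-2\eta+o(\eta))\mathcal R$, so $-K+\varepsilon_{\rm mov}\eta\mathcal R=-(\tfrac12-o(1))\mathcal R$ because $\eta\to0$. Finally, for odd $h$ the set $\mathcal J_h$ is empty by Lemma~\ref{lem:J-finite}, and hence $D_{h,>Z}^\varphi(x)=0$.

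I expect the main obstacle to be bookkeeping rather than new ideas. One point is checking that the cited moving propositions are genuinely uniform in $h\le y$ with absolute constants and no hidden $o_J$ terms; Proposition~\ref{lem:moving-dependence-ledger} asserts this. The other is confirming that the disjointness and exact cardinality identity \eqref{eq:diagonal-cardinality-identity} hold at cutoff $Z$ for even $h$. That identity requires only $h\le Z$, which holds here.
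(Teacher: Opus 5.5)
Your proposal is correct and follows the paper's proof essentially step for step. You use the same three-way split by $P^+(M)>Z$ and by $\Omega_y(M)$ versus $J$: Proposition~\ref{prop:transfer-diagonal} at cutoff $Z$ together with Proposition~\ref{prop:moving-b4} handles the first class, and Propositions~\ref{prop:moving-low-rank} and~\ref{prop:moving-high-rank} handle the other two, followed by \eqref{eq:moving-K} and Lemma~\ref{lem:J-finite}. The only difference is cosmetic: you omit $\epsilon_{\rm BFPS}$ from $\varepsilon_{\rm mov}$, which is legitimate because Proposition~\ref{prop:moving-low-rank} already carries a saving of $c\eta\mathcal R$ beyond $e^{-K}$.
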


\begin{proof}
Let $M=\varphi(n)=\varphi(n+h)$.  If $P^+(M)>Z$, the set-theoretic identity
\eqref{eq:diagonal-cardinality-identity}, with $y=Z$, gives
\[
 0\le
 \#\{n\le x:P^+(M)>Z\}-D_{h,>Z}^\varphi(x)
 \le \mathcal E_h(x;Z).
\]
The square-source, small-cofactor, and reciprocal terms in
Proposition~\ref{prop:transfer-diagonal} are
\[
 \ll \frac{x(1+L+K^2)}{e^K}+x^{1/2}
 \ll x\exp\{-K+\epsilon_{\rm tr}(x)\eta\mathcal R\},
\]
and Proposition~\ref{prop:moving-b4} treats the divisor-weighted term.
Therefore
\[
 0\le
 \#\{n\le x:P^+(M)>Z\}-D_{h,>Z}^\varphi(x)
 \ll xe^{-K+\epsilon_{\rm tr}(x)\eta\mathcal R}.
\]

Suppose $P^+(M)\le Z$.  If $\Omega_y(M)<J$, apply Proposition
\ref{prop:moving-low-rank}; if $\Omega_y(M)\ge J$, apply Proposition
\ref{prop:moving-high-rank}.  These cases are disjoint and exhaustive.
Define explicitly
\begin{equation}\label{eq:epsilon-mov-definition}
 \varepsilon_{\rm mov}(x)=
 C\bigl(\epsilon_{\rm BFPS}(x)+\epsilon_{\rm dep}(x)
        +\epsilon_{\rm tr}(x)\bigr),
\end{equation}
with $C$ large enough for the preceding three estimates.  Each summand
tends to zero by Lemmas~\ref{lem:bfps-uniform} and
\ref{lem:named-moving-errors}, and by
Proposition~\ref{lem:moving-dependence-ledger}; hence no unnamed diagonal choice is
present.  Finally \eqref{eq:moving-K} yields
\eqref{eq:moving-rank-simplified}, and Lemma~\ref{lem:J-finite} removes
the diagonal for odd $h$.
\end{proof}

\begin{proof}[Proof of Theorem~\ref{thm:scale-jump}]
Take the explicit parameters in
\eqref{eq:moving-parameters}--\eqref{eq:moving-JK} and apply
Theorem~\ref{thm:moving-rank-decomposition}.  The asymptotic formulae for
$T$ and $K$ are \eqref{eq:moving-asymptotics} and
\eqref{eq:moving-K}.
\end{proof}

\begin{proof}[Proof of Corollary~\ref{cor:scale-jump-intro}]
Use $h=1$ in Theorem~\ref{thm:scale-jump}; the diagonal is empty by
Lemma~\ref{lem:J-finite}.
\end{proof}

\begin{corollary}[Uniform odd shifts on the new scale]
\label{cor:scale-jump-h1}
Uniformly for odd $1\le h\le e^T$,
\begin{equation}\label{eq:scale-jump-h1}
 S_h^\varphi(x)
 \ll x\exp\left\{-(\tfrac12-o(1))
        \sqrt{\log x\log_2 x}\right\},
\end{equation}
where
\[
 T=(1+o(1))(\log_3 x+\log_4 x-\log 2)
       \sqrt{\frac{\log x}{\log_2 x}}.
\]
\end{corollary}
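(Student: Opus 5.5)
The statement follows directly from Theorem~\ref{thm:moving-rank-decomposition}, so the plan is a specialization rather than a new argument. Fix a positive odd $h$ with $1\le h\le e^T$, where $T=L\sA/(2E)$ is the moving cutoff of \eqref{eq:moving-parameters}. Since $y=e^T$, this $h$ lies in the range $h\le y$ of Theorem~\ref{thm:moving-rank-decomposition}. That theorem gives, uniformly in such $h$,
\[
 S_h^\varphi(x)=D_{h,>Z}^\varphi(x)
 +O\left(x\exp\{-K+\varepsilon_{\rm mov}(x)\eta\mathcal R\}\right).
\]
Because $h$ is odd, Lemma~\ref{lem:J-finite} gives $\mathcal J_h=\varnothing$. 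Hence $D_{h,>Z}^\varphi(x)=0$ and only the error term remains.

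Next, convert the error exponent to the stated shape. By \eqref{eq:moving-K} we have $K=(\tfrac12-2\eta+o(\eta))\mathcal R$. Since $\eta=\sA^{-1/2}\to0$ and $\varepsilon_{\rm mov}(x)\to0$, it follows that
\[
 -K+\varepsilon_{\rm mov}(x)\eta\mathcal R
 =-(\tfrac12-o(1))\mathcal R,
\]
with $\mathcal R=\sqrt{\log x\log_2 x}$. The $o(1)$ here is independent of $h$, because $\varepsilon_{\rm mov}$ is an explicit function of $x$ alone; see \eqref{eq:epsilon-mov-definition}. This gives \eqref{eq:scale-jump-h1}.

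Finally, the asymptotic for $T$ is the first formula in \eqref{eq:moving-asymptotics}, namely $T=(1+o(1))\sA\sqrt{L/L_2}$, with $\sA=\log_3 x+\log_4 x-\log 2$ substituted.

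The only point that needs care is uniformity. The implied constant and the $o(1)$ must not depend on $h$ anywhere in the range. This holds because every estimate used in Theorem~\ref{thm:moving-rank-decomposition} depends on $h$ only through the hypothesis $h\le y$: Lemma~\ref{lem:shifted-divisor-convolution} needs all weight primes to exceed $h$, and the transfer needs $p>h$. No other step is an obstacle.
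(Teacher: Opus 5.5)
Your proposal is correct and follows the paper's proof: apply Theorem~\ref{thm:moving-rank-decomposition} in the range $h\le y=e^T$, use the parity statement of Lemma~\ref{lem:J-finite} to remove the diagonal, and convert the error exponent via \eqref{eq:moving-K}. The asymptotic for $T$ is read off from \eqref{eq:moving-asymptotics}, and your remarks on uniformity in $h$ and the explicit error function $\varepsilon_{\rm mov}$ fill in the details the paper leaves implicit.
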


\begin{proof}
Apply Theorem~\ref{thm:moving-rank-decomposition} and use the parity
statement in Lemma~\ref{lem:J-finite}.
\end{proof}

\begin{proposition}[The moving-rank saddle of the present method]
\label{prop:moving-method-saddle}
Let $\mathcal R=(LL_2)^{1/2}$ and suppose the outer nonsmooth cutoff is
$Z=e^{c\mathcal R}$, where $c>0$ is fixed.  The transfer estimate obtained
from Proposition~\ref{prop:transfer-diagonal} and
Lemma~\ref{lem:weighted-smooth} is at most
\begin{equation}\label{eq:moving-method-saddle}
 x\exp\left\{-\left(\min\left(c,\frac1{4c}\right)+o(1)\right)
                    \mathcal R\right\}.
\end{equation}
Consequently the bound \eqref{eq:moving-method-saddle} is optimized at
$c=1/2$, where its exponent is $(1/2+o(1))\mathcal R$.
\end{proposition}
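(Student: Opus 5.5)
The plan is to evaluate the two competing exponents in the transfer estimate at $K=c\mathcal R$ and then optimize in $c$. Proposition~\ref{prop:transfer-diagonal} with cutoff $Z=e^{K}$ bounds the off-diagonal nonsmooth count by the elementary terms plus
\[
 \Sigma(Z)=\sum_{Z<p\le(2xZ)^{1/2}}\Psi_\tau\!\left(\frac{2xZ}{p^2},p\right).
\]
The elementary terms $x/Z$, $x^{1/2}$, $x\log x/Z$ and $x(\log Z)^2/Z$ are all at most $x\exp\{-(c+o(1))\mathcal R\}$, because $\log L$ and $\log K$ are $O(L_2)=o(\mathcal R)$ and $c\mathcal R=o(L)$.

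For $\Sigma(Z)$ I split at $p=Z^2$, exactly as in Proposition~\ref{prop:moving-b4}.

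\emph{Range $p>Z^2$.} The trivial divisor bound gives at most $xZL\sum_{p>Z^2}p^{-2}\ll xLe^{-K}$, which is $x\exp\{-(c+o(1))\mathcal R\}$.

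\emph{Range $Z<p\le Z^2$.} Put $u_p=\log(2xZ/p^2)/\log p$. This satisfies
\[
 \frac{L-3K+O(1)}{2K}\le u_p\le\frac{L-K+O(1)}{K},
\]
so $u_p\asymp\sqrt{L/L_2}$ uniformly. Consequently $\log u_p=\tfrac12L_2+O(L_3)$, and the hypotheses $u_0\le u_p\le p^{1/2}$ of Lemma~\ref{lem:weighted-smooth} hold. The general form \eqref{eq:weighted-tail-general} then gives
\[
 \Psi_\tau(X_p,p)\le X_p\exp\{-u_p\log u_p+O(L_2+u_p/\log u_p)\}.
\]
Since $u_p/\log u_p=o(\mathcal R)$, the error term is $o(\mathcal R)$.

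The worst case is the endpoint $p=Z^2$. There
\[
 u_p\log u_p\ge\frac{L}{2c\mathcal R}\cdot\tfrac12L_2\,(1+o(1))=\Bigl(\frac1{4c}+o(1)\Bigr)\mathcal R.
\]
Summing $X_p$ over $Z<p\le Z^2$ gives $xZ\sum_{p>Z}p^{-2}\ll x$, so this range contributes $x\exp\{-(\tfrac1{4c}+o(1))\mathcal R\}$. Adding the two ranges and the elementary terms yields \eqref{eq:moving-method-saddle}.

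The optimization is elementary. The function $c\mapsto\min(c,1/(4c))$ is increasing for $c<1/2$ and decreasing for $c>1/2$, with equality $c=1/(4c)$ exactly at $c=1/2$. Its maximum value is therefore $1/2$.

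The only delicate point is uniformity of the $o(1)$ over the whole range of $p$. Here $u_p$ changes by a factor of about $2$ between $p=Z$ and $p=Z^2$, but $\log u_p$ changes only by $O(1)$, which is negligible against $\tfrac12L_2$. The minimum of $u_p\log u_p$ is therefore attained at $p=Z^2$ up to a $(1+o(1))$ factor. I would record this as a monotonicity check on $u\log u$, in the style of Lemma~\ref{lem:moving-admissibility}.
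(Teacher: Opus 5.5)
Your proposal is correct and follows the paper's proof essentially step for step. The elementary terms and the range $p>Z^2$ give the exponent $c$, the range $Z<p\le Z^2$ is handled by the general form of Lemma~\ref{lem:weighted-smooth} at the worst endpoint $p=Z^2$ to give $1/(4c)$, and then $\min(c,1/(4c))$ is maximized at $c=1/2$. Your closing uniformity concern needs less argument than you give it: $u_p=\log(2xZ)/\log p-2$ is decreasing in $p$ and $u\log u$ is increasing, so your uniform lower bound for $u_p$ already yields the uniform lower bound for $u_p\log u_p$.
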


\begin{proof}
The square-source, small-cofactor, and reciprocal terms in
Proposition~\ref{prop:transfer-diagonal} have size
$xe^{-c\mathcal R+o(\mathcal R)}$.  In the intermediate range
$Z<p\le Z^2$, put $u_p=\log(2xZ/p^2)/\log p$.  At the worst endpoint,
\[
 u_p=\left(\frac1{2c}+o(1)\right)\sqrt{\frac{L}{L_2}},
 \qquad
 \log u_p=(\tfrac12+o(1))L_2.
\]
Thus
\[
 u_p\log u_p=\left(\frac1{4c}+o(1)\right)\mathcal R.
\]
The general divisor-weighted estimate
\eqref{eq:weighted-tail-general}, followed by summation of
$2xZ/p^2$, gives the second exponent in
\eqref{eq:moving-method-saddle}; the tail $p>Z^2$ contributes
$\ll x\log(2x)\sum_{p>Z^2}Z/p^2\ll xe^{-c\mathcal R+o(\mathcal R)}$,
which is absorbed into the first exponent.  Taking the smaller of the two
exponents proves the first assertion.  Finally
$\min(c,1/(4c))$ is maximized at $c=1/2$.
\end{proof}

\begin{remark}
Proposition~\ref{prop:moving-method-saddle} records the internal
optimization of the present transfer architecture.  Within these estimates,
the constant $1/2$ is the optimizing value.  It is the optimum of the
stated bounds rather than a limit of the method itself: the intermediate-range
estimate above is obtained by charging every prime the saving of the worst
endpoint $p=Z^2$, and is not tight, whereas the constraint $c\le1/2$ is
already forced by the source-height threshold of Lemma~\ref{lem:named-moving-errors},
whose exponent $\tfrac12$ is inherited from the cutoff $\Lambda_{>\sqrt x}$
of Lemma~\ref{lem:shifted-divisor-convolution}.
\end{remark}

\begin{proposition}[The classical $h=2$ family]\label{prop:h2-family}
Let $t\ge1$.  If $2t+1$ and $4t+1$ are both prime, then
\[
 n=2(4t+1)
\]
satisfies $\varphi(n)=\varphi(n+2)$.
\end{proposition}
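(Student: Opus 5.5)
The plan is a direct computation using the multiplicativity of $\varphi$. I also want to show that this family is exactly the diagonal parametrization of Proposition~\ref{prop:diag-param} with $h=2$ and $j=2$. For that identification, note first that $\rad(2)=\rad(4)=2$, so $2\in\mathcal J_2$. Then
\[
 d_2=(2,4)=2,\qquad A_2=\frac{4}{2}=2,\qquad B_2=\frac{2}{2}=1.
\]
Taking $r=2t$, the parametrization gives
\[
 n=j(A_jr+1)=2(4t+1),\qquad n+2=(j+h)(B_jr+1)=4(2t+1).
\]
This agrees with the statement, since $2(4t+1)+2=8t+4=4(2t+1)$.

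The hypotheses of Proposition~\ref{prop:diag-param} are easy to check. The numbers $A_jr+1=4t+1$ and $B_jr+1=2t+1$ are prime by assumption. Both are odd, so $(4t+1,2)=1$ and $(2t+1,4)=1$. The size and smoothness conditions are not needed, because the first half of Proposition~\ref{prop:diag-param} explicitly drops them. That proposition therefore gives $\varphi(n)=\varphi(n+2)$ at once.

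As a self-contained check, I would also compute both sides directly. Since $4t+1$ is an odd prime coprime to $2$,
\[
 \varphi(n)=\varphi(2)\varphi(4t+1)=4t.
\]
Since $2t+1$ is an odd prime coprime to $4$,
\[
 \varphi(n+2)=\varphi(4)\varphi(2t+1)=2\cdot 2t=4t.
\]
The two values agree.

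No step is a real obstacle. The only points needing care are the identity $n+2=4(2t+1)$ and the oddness of the two primes, which is what gives coprimality with the powers of $2$. The condition $t\ge1$ ensures that both linear forms exceed $2$, so they are odd primes.
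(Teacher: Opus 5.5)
Your proposal is correct: the direct computation $\varphi(2(4t+1))=\varphi(2)\varphi(4t+1)=4t=\varphi(4)\varphi(2t+1)=\varphi(4(2t+1))$, together with $n+2=4(2t+1)$, is exactly the paper's proof. Your identification of the family as the case $j=2$, $r=2t$ of Proposition~\ref{prop:diag-param} (with $d_2=2$, $A_2=2$, $B_2=1$) is also correct, and it matches the paper's remark that this construction is a member of the even-shift diagonal.
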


\begin{proof}
One has $n+2=4(2t+1)$, and therefore
\[
 \varphi(n)=\varphi(2)\varphi(4t+1)=4t
 =\varphi(4)\varphi(2t+1)=\varphi(n+2).
\]
\end{proof}

\begin{remark}[Provenance]
This illustrative $h=2$ construction is classical and goes back to
Moser \cite{Moser}; Schinzel extended the construction to general even
shifts \cite{Schinzel}.  It is included as a concrete classical member of the even-shift diagonal.
\end{remark}

\begin{remark}
Theorem \ref{thm:all-shifts} is the amplified all-shifts form of the
result.  For odd shifts the finite diagonal is empty.  For even shifts it
contains the Graham--Holt--Pomerance prime-pair mechanism \cite{GHP}, while everything
outside that specified diagonal enjoys every fixed amplified rank.
\end{remark}

\section{Further consequences and variants}

\subsection{Immediate consequences for chains}
For integers $r\ge2$ and $h\ge1$, define
\[
 \mathcal C_{r,h}^{\varphi}(x)
 =\#\{n\le x:\varphi(n)=\varphi(n+h)=\cdots
                   =\varphi(n+(r-1)h)\}.
\]
For $Y\ge2$, let $\mathcal D_{r,h,>Y}^{\varphi}(x)$ count the integers in
this chain set whose first pair belongs to $D_{h,>Y}^{\varphi}(x)$.  Both
quantities count integers, and
\begin{equation}\label{eq:chain-set-inclusions}
 0\le\mathcal D_{r,h,>Y}^{\varphi}(x)
 \le D_{h,>Y}^{\varphi}(x),
 \qquad
 \mathcal C_{r,h}^{\varphi}(x)\le S_h^{\varphi}(x).
\end{equation}

\begin{corollary}[Pair bounds inherited by chains]
\label{cor:chain-inherited}
Fix $r\ge2$ and a fixed integer $Q\ge1$.  Put
\[
 T_Q=\frac{G}{\sqrt Q},\qquad Y_Q=e^{\sqrt QG}.
\]
For $x\ge x_0(Q)$, uniformly for $1\le h\le e^{T_Q}$,
\begin{equation}\label{eq:chain-inherited-fixed}
 \mathcal C_{r,h}^{\varphi}(x)
 =\mathcal D_{r,h,>Y_Q}^{\varphi}(x)
 +O_Q\!\left(xe^{-\sqrt QG+\varepsilon_Q(x)V}\right).
\end{equation}
With the moving parameters of Theorem~\ref{thm:scale-jump}, uniformly for
$1\le h\le y$,
\begin{equation}\label{eq:chain-inherited-moving}
 \mathcal C_{r,h}^{\varphi}(x)
 =\mathcal D_{r,h,>Z}^{\varphi}(x)
 +O\!\left(xe^{-K+\varepsilon_{\rm mov}(x)\eta\mathcal R}\right).
\end{equation}
For odd $h$ both diagonal terms vanish.
\end{corollary}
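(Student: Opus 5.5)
The plan is to deduce Corollary~\ref{cor:chain-inherited} from the pair decompositions, Theorem~\ref{thm:all-shifts} (with $J=Q$) and Theorem~\ref{thm:moving-rank-decomposition}, using only the set inclusions \eqref{eq:chain-set-inclusions}. No new analytic input is needed. Write $\mathcal C$ for the chain set counted by $\mathcal C_{r,h}^{\varphi}(x)$, $\mathcal S$ for the pair set counted by $S_h^\varphi(x)$, and $\mathcal D$ for the pair-diagonal set counted by $D_{h,>Y}^\varphi(x)$. By definition, the chain diagonal is exactly $\mathcal C\cap\mathcal D$. Hence
\[
 \mathcal C_{r,h}^{\varphi}(x)-\mathcal D_{r,h,>Y}^{\varphi}(x)
 =\#(\mathcal C\setminus\mathcal D)\le\#(\mathcal S\setminus\mathcal D)
 =S_h^\varphi(x)-D_{h,>Y}^\varphi(x),
\]
where the last equality uses $\mathcal D\subseteq\mathcal S$. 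That inclusion holds because, by Proposition~\ref{prop:diag-param}, every diagonal integer satisfies $\varphi(n)=\varphi(n+h)$. The left side is also nonnegative, so this is the key sandwich.

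For \eqref{eq:chain-inherited-fixed}, take $Y=Y_Q$ and apply Theorem~\ref{thm:all-shifts} with $J=Q$. In the range $h\le e^{T_Q}$ it bounds $S_h^\varphi(x)-D_{h,>Y_Q}^\varphi(x)$ by $O_Q(xe^{-\sqrt QG+\varepsilon_Q(x)V})$, with the same $x_0(Q)$ and $\varepsilon_Q$. Because the difference is nonnegative, this upper bound on its absolute value gives the asserted equality with an $O$-term. For \eqref{eq:chain-inherited-moving}, take $Y=Z$ and apply Theorem~\ref{thm:moving-rank-decomposition} uniformly for $h\le y$ in the same way. The parameter $r$ does not enter the error at all, so the uniformity in $h$ is inherited verbatim.

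For odd $h$, Lemma~\ref{lem:J-finite} gives $\mathcal J_h=\varnothing$, so $D_{h,>Y}^\varphi(x)=0$. The first inequality in \eqref{eq:chain-set-inclusions} then forces $\mathcal D_{r,h,>Y}^\varphi(x)=0$ as well.

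The only real point to check is the set-theoretic one: that the pair decomposition controls $\#(\mathcal S\setminus\mathcal D)$ as a cardinality rather than as a difference of counts with cancellation. This is exactly what \eqref{eq:diagonal-cardinality-identity} records, and it is why the theorems state exact inclusions. I would also verify that the diagonal sets for the two pairs refer to the same cutoff $Y$, which is immediate from the definition.
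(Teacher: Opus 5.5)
Your proof is correct and is essentially the paper's argument: the paper also bounds $|\mathscr C|-|\mathscr C\cap\mathscr D_Y|$ by the off-diagonal pair count $S_h^\varphi(x)-D_{h,>Y}^\varphi(x)$. It then invokes the fixed-rank and moving-rank pair decompositions (citing Theorems~\ref{thm:main} and~\ref{thm:scale-jump}, which are equivalent to the results you use) and disposes of odd $h$ via Lemma~\ref{lem:J-finite}. Your explicit check of $\mathcal D\subseteq\mathcal S$ via Proposition~\ref{prop:diag-param} makes precise a step the paper leaves implicit.
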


\begin{proof}
Let $\mathscr C$ be the set of chain solutions and let $\mathscr D_Y$ be
the first-pair diagonal set.  Then
\[
 0\le |\mathscr C|-|\mathscr C\cap\mathscr D_Y|
 \le |\{n\le x:\varphi(n)=\varphi(n+h)\}|-|\mathscr D_Y|.
\]
The right side is exactly the off-diagonal cardinality bounded by
Theorem~\ref{thm:main} at rank $Q$, or by
Theorem~\ref{thm:scale-jump} in the moving case.  This proves both
formulas.  Odd shifts have empty pair diagonal by
Lemma~\ref{lem:J-finite}.
\end{proof}

\begin{remark}[No separate chain amplification]
The proof uses only the first equality
$\varphi(n)=\varphi(n+h)$.  No additional saving is obtained from the
remaining $r-2$ equalities.  For example, choosing $Q=2$ in
\eqref{eq:chain-inherited-fixed} gives
\[
 \#\{n\le x:\varphi(n)=\varphi(n+1)=\varphi(n+2)\}
 \ll xe^{-\sqrt2G+\varepsilon_2(x)V},
\]
but this is merely the rank-two pair bound restricted to a smaller set.
\end{remark}

\subsection{Rank-one and optimization consequences}

\begin{corollary}[Rank-one nonsmooth estimate]\label{cor:nonsmooth-saddle}
Let $y=e^G$.  Uniformly for positive odd $h\le y$,
\[
 B_h(x;y)\ll xe^{-G+o(V)}.
\]
\end{corollary}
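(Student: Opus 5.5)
The bound on $B_h(x;y)$ with $y=e^G$ follows by combining the nonsmooth transfer theorem with the saddle estimate for the divisor-weighted sum; it is the rank-one calculation already carried out inside the proof of Proposition~\ref{prop:rank-one-case}, isolated here. Since $h$ is odd and $h\le y$, Theorem~\ref{thm:transfer} applies with $2\le y\le x$. For large $x$ the inequality $y\le x$ holds because $G=o(L)$ by Lemma~\ref{lem:scales}. The theorem gives
\[
 B_h(x;y)\ll \frac{x}{y}+x^{1/2}+\frac{x\log x}{y}+\frac{x(\log y)^2}{y}
 +\sum_{y<p\le(2xy)^{1/2}}\Psi_\tau\!\left(\frac{2xy}{p^2},p\right),
\]
with an absolute implied constant, so the bound is uniform in $h$.

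The last sum is handled by Corollary~\ref{cor:b4}, which bounds it by $x\exp\{-G+o(V)\}$. That corollary is Proposition~\ref{prop:b4-general} at $T=G$, and its $o(V)$ term does not depend on $h$.

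The four elementary terms are checked one at a time against $xe^{-G+o(V)}$. First, $x/y=xe^{-G}$. Next, $x\log x/y=x e^{-G+\log L}$ and $x(\log y)^2/y=xe^{-G+2\log G}$, and both $\log L$ and $\log G\le\log L$ are $o(V)$ by Lemma~\ref{lem:scales}. Finally, $x^{1/2}=xe^{-L/2}\le xe^{-G}$ because $G=o(L)$. Summing the five contributions gives $B_h(x;y)\ll xe^{-G+o(V)}$ uniformly for odd $h\le y$.

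No step is a real obstacle. The only point needing care is that the $o(V)$ terms coming from Corollary~\ref{cor:b4} and from the logarithmic factors are uniform in $h$. This holds because $h$ enters only through the hypothesis $h\le y$ of Theorem~\ref{thm:transfer}, whose constant is absolute.
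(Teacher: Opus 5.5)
Your proposal is correct and follows the paper's proof exactly: the paper likewise combines Theorem~\ref{thm:transfer}, Corollary~\ref{cor:b4}, and Lemma~\ref{lem:scales}. Your explicit check of the four elementary terms and of uniformity in $h$ matches the corresponding step in the proof of Proposition~\ref{prop:rank-one-case}.
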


\begin{proof}
Combine Theorem~\ref{thm:transfer}, Corollary~\ref{cor:b4}, and
Lemma~\ref{lem:scales}.
\end{proof}

\begin{remark}[Optimization statements]
The variable-cutoff and moving-cutoff calculations in
Propositions~\ref{prop:variable-saddle} and
\ref{prop:moving-method-saddle} describe the best exponents furnished by
the decompositions and estimates used in this paper.  They identify the
internal saddle points of the method.
\end{remark}

\section*{Acknowledgements}
The author acknowledges the use of OpenAI's ChatGPT during the preparation of this manuscript.  While it was used for ideation, formulation, proof exploration and refinement, narrowing the search space, programming, LaTeX formatting and other forms of orchestration, the author nonetheless takes full responsibility for the accuracy of the final contents of this paper.

The accompanying Lean formalisation~\cite{LeanFormalization} was developed by the author with Codex and Claude Code; the author likewise takes full responsibility for the formal statements' fidelity to the results of this paper.

\end{document}